\newtheorem{theorem}{Theorem}[section]
\newtheorem{lemma}[theorem]{Lemma}
\newtheorem{proposition}[theorem]{Proposition}
\newtheorem{corollary}[theorem]{Corollary}
\newtheorem{assumption}[theorem]{Assumption}
\theoremstyle{remark}
\newtheorem{remark}[theorem]{Remark}
\numberwithin{equation}{section}
\newcommand \id{\mathds 1}
\newcommand {\R} {\mathbb{R}}
\newcommand {\D} {\mathcal{D}}
\newcommand {\E} {\mathbb{E}}
\newcommand {\N} {\mathbb{N}}
\newcommand {\Z} {\mathbb{Z}}
\renewcommand{\P} {\mathbb{P}}
\newcommand {\U} {\mathcal{U}}
\newcommand {\Var} {\mathrm{Var}}
\newcommand {\Cov} {\mathrm{Cov}}
\newcommand {\DC} {\mathrm{DC}}
\newcommand {\dist} {\mathrm{dist}}
\newcommand {\eps}{\varepsilon}
\newcommand{\ind}{\mathds{1}}
\begin{document}
\title[A CLT for the number of excursion set components of Gaussian fields]{A central limit theorem for the number of excursion set components of Gaussian fields} 
\author{Dmitry Beliaev\textsuperscript{1}}
\address{\textsuperscript{1}Mathematical Institute, University of Oxford}
\email{belyaev@maths.ox.ac.uk}
\author{Michael McAuley\textsuperscript{2}}
\address{\textsuperscript{2}Department of Mathematics and Statistics, University of Helsinki}
\email{michael.mcauley@helsinki.fi}
\author{Stephen Muirhead\textsuperscript{3}}
\address{\textsuperscript{3}School of Mathematics and Statistics, University of Melbourne}
\email{smui@unimelb.edu.au}
\subjclass[2010]{60G60, 60G15, 58K05}
\keywords{Gaussian fields, excursion sets, level sets, component count, central limit theorem} 
\begin{abstract}
For a smooth stationary Gaussian field on $\R^d$ and level $\ell \in \R$, we consider the number of connected components of the excursion set $\{f \ge \ell\}$ (or level set $\{f = \ell\}$) contained in large domains. The mean of this quantity is known to scale like the volume of the domain under general assumptions on the field. We prove that, assuming sufficient decay of correlations (e.g.\ the Bargmann-Fock field), a central limit theorem holds with volume-order scaling. Previously such a result had only been established for `additive' geometric functionals of the excursion/level sets (e.g.\ the volume or Euler characteristic) using Hermite expansions. Our approach, based on a martingale analysis, is more robust and can be generalised to a wider class of topological functionals. A major ingredient in the proof is a third moment bound on critical points, which is of independent interest.
\end{abstract}
\date{\today}
\thanks{}

\maketitle

\section{Introduction}\label{s:introduction}
Let $f:\R^d\to\R$ be a smooth stationary centred Gaussian field. We consider the geometry of the (upper)-excursion sets and level sets, defined respectively as
\begin{displaymath}
\{f\geq\ell\}:=\big\{x\in\R^d:f(x)\geq\ell\big\}\quad\text{and}\quad\{f=\ell\}:=\big\{x\in\R^d:f(x)=\ell\big\} , \quad \ell \in \R.
\end{displaymath}
In particular, we are interested in the number of connected components of these sets contained in large domains (the `component count').

The geometry of Gaussian excursion/level sets is a well-studied topic, with applications to a range of subjects including cosmology \cite{bbks86,pra19}, medical imaging \cite{wor96}, and quantum chaos~\cite{js17}. The component count is of high importance to these applications; to give an example, physical theories predict that the Cosmic Microwave Background Radiation can be modelled as a realisation of an isotropic Gaussian field on the sphere, and in \cite{pra19} this prediction was tested statistically using numerical simulations of the component count of a Gaussian field.

A second motivation to study the component count comes from recent progress in understanding the connectivity of Gaussian field excursion sets (see, e.g., \cite{bg17, rv20, mv20, mrv23} for a selection of recent results, and more generally \cite{gri99} for background on classical percolation theory). The component count is significant for this study: in classical percolation, smoothness properties of the mean number of open clusters (which corresponds to the excursion set count in the Gaussian setting) are related to the uniqueness of infinite connected components~\cite{akn87}, and broadly analogous results have recently been proven for Gaussian fields~\cite{bmm20a}.

In dimension $d=1$ the component count reduces to the number of level crossings, which is a classical topic in probability theory \cite{kra06}.

\subsection{Existing results on the component count}
Recall that $f$ is a smooth stationary centred Gaussian field on $\R^d$, and let $K(x) = \E[f(0)f(x)]$ be its covariance kernel. For $R>0$ and $\ell\in\R$ we denote by $N_\mathrm{ES}(R,\ell)$ and $N_\mathrm{LS}(R,\ell)$, respectively, the number of connected components of $\{f\geq\ell\}$ and $\{f = \ell\}$ which are contained in the cube $\Lambda_R = [-R,R]^d$ (i.e.\ which intersect this set but not its boundary). The precise choice of domain $\Lambda_R$, and the choice to exclude boundary components, are mainly for concreteness, and could be modified with minimal change to the results or proof. For simplicity, we shall often write $N_\star(R,\ell)$, $\star \in \{\mathrm{ES},\mathrm{LS}\}$, to refer collectively to these component counts.

We are interested in the asymptotics of $N_\star(R, \ell)$ as $R \to \infty$. The first-order convergence (i.e.\ law of large numbers) was established by Nazarov and Sodin \cite{ns16}: under very mild conditions on the field, as $R \to \infty$,
\begin{equation}
\label{e:lln}
\frac{N_\star(R,\ell)}{\mathrm{Vol}(\Lambda_R)}\to \mu \quad \text{in $L^1$ and almost surely,}
\end{equation}
for a constant $\mu = \mu_\star(\ell)>0$ which depends on the law of $f$. Although the result in \cite{ns16} was stated only for the nodal set (i.e.\ the level set at $\ell=0$), the proof goes through verbatim for excursion/level sets at all levels (see, e.g., \cite{bmm20}). The law of large numbers has since been extended to other quantities related to the component count \cite{kw18,bw18,sw19,wig21}.

A natural next step is to investigate the second-order properties of $N_\star(R, \ell)$, which are expected to depend strongly on the covariance structure of the field. Here we focus on the \textit{short-range correlated} case in which $K \in L^1(\R^d)$; an important example is the \textit{Bargmann-Fock field} with $K(x) = e^{-|x|^2 / 2}$ (see \cite{bg17} for background and motivation). In this case it is expected that $N_\star(R,\ell)$ satisfies a central limit theorem (CLT) with volume-order scaling $\asymp R^d$. This has previously been established for various `additive' geometric functionals of the excursion/level sets (for instance their volume or Euler characteristic \cite{kl01,el16,an17,mul17,kv18}), and it is also known in the case of an i.i.d.\ Gaussian field on $\Z^d$ \cite{cg84,zha01,pen01} (where the component count is equivalent to the number of clusters in classical site percolation).

Thus far, progress on understanding second-order properties has been limited to bounds on the variance which are mostly sub-optimal and apply only to planar fields.

In \cite{ns20} Nazarov and Sodin proved a polynomial lower bound
\[ \Var[N_\star(R,\ell)] \geq c R^\eta \]
valid for general planar fields with polynomially decaying correlations (to be more precise, they considered families $(f_n)_{n\geq 1}$ of Gaussian fields defined on the sphere $\mathbb{S}^2$ which converge locally, and only considered the nodal set, but we expect the proof extends, up to boundary effects, to the Euclidean setting we consider here). The exponent $\eta > 0$ was not quantified but is small. 

In \cite{bmm22} sharper results were proven for planar fields under stronger conditions. More precisely, if $f$ is short-range correlated, and if $\int K(x) dx \neq 0$ and $\frac{d}{d\ell}\mu_\star(\ell)\neq 0$ (recall that $\mu_\star(\ell)$ is the limiting constant in \eqref{e:lln}), then
\[ \Var[N_\star(R,\ell)]\geq cR^2 . \]
Further, in \cite{bmm20a} the condition $\frac{d}{d\ell}\mu_\star(\ell)\neq 0$ was shown to hold for a large range of levels (including the zero level when $\star=\mathrm{ES}$).

Turning to upper bounds, it is straightforward to establish that (in all dimensions)
\begin{equation}
	\label{e:ub}
	\Var[N_\star(R,\ell)] \le c R^{2d} 
\end{equation}
 using a comparison with critical points. More precisely, since each excursion (resp.\ level) set component contains (resp.\ surrounds) at least one critical point, the component count in a compact domain is bounded by the number of critical points in the domain. Since the latter quantity has a second moment of order $\asymp R^{2d}$ \cite{eli85,ef16}, we deduce \eqref{e:ub}. Note that this bound is only expected to be attained for very degenerate Gaussian fields (see \cite{bmm20,bmm22} for examples).
 
 Various concentration bounds have also been established for $N_\star(R,\ell)$ \cite{rv19, bmr20, pri20}, but these do not lead to improved bounds on the variance in the short-range correlated case. Related questions have also been studied in the `sparse' regime $\ell_R \to \infty$ as $R \to \infty$ \cite{tk18}.

\subsection{CLT for the component count}\label{s:Main results}
Our main result establishes a CLT for $N_\star(R,\ell)$ with volume-order scaling, assuming sufficient decay of correlations (e.g.\ the Bargmann-Fock field).

We assume that $f$ has a \textit{spatial moving average representation}
\begin{equation}
    \label{e:wnr}
f = q \ast W ,
\end{equation}
where $q \in L^2(\R^d)$ is Hermitian (i.e.\ $q(x) = q(-x)$), $W$ is the white noise on $\R^d$, and $\ast$ denotes convolution. This representation always exists in the short-range correlated case $K \in L^1(\R^d)$, since one can choose $q = \mathcal{F}^{-1}[\sqrt{\mathcal{F}[K]}]$, where $\mathcal{F}[\cdot]$ denotes the Fourier transform. The covariance kernel of $f$ is $K = q \ast q$.

We impose the following assumptions on the kernel $q$: 
\begin{assumption}
\label{a:clt}
$\,$
\begin{enumerate}
    \item (Non-degeneracy) $q\ne 0$.
    \item (Smoothness) $\partial^\alpha q \in L^1(\R^d)\cap C(\R^d)$ for every $|\alpha| \le 4$ and $K=q\ast q\in C^{10}$.
    \item (Decay) There exist $\beta > 9d$ and $c \ge 1$ such that, for all $|x| \ge 1$,
\[  \max_{|\alpha| \le 2} |\partial^\alpha q(x)|  \le c |x|^{-\beta} . \]
\end{enumerate}
\end{assumption}

Since we assume $q \in L^1(\R^d)$, for positive $q$ the decay rates of $q$ and $K = q \ast q$ are comparable up to constants. In general, $K$ decays at least as quickly as $q$, but due to cancellations it could decay more quickly. Hence, roughly speaking, Assumption \ref{a:clt} demands that correlations decay polynomially with exponent $\beta > 9d$. In particular, the Bargmann-Fock field satisfies Assumption \ref{a:clt}. See Remark~\ref{r:beta} for an explanation of how the condition $\beta > 9d $ arises, and how it can be weakened.

Assumption \ref{a:clt} implies that $f$ is $C^4$-smooth almost surely (this follows from Kolmogorov's theorem, see \cite[Appendix~A]{ns16}). This degree of smoothness may seem strong in comparison to other works, e.g.\ \cite{ns20,bmm22}. However, a key novelty of our approach is that we exploit a \textit{third} moment bound on critical points (see Theorem \ref{t:tmb}), whose proof requires fourth-order smoothness.

Most results for level sets of random fields require some non-degeneracy of the field; it is usually required that the joint distribution of the field and some of its derivatives at a point is non-degenerate. In particular, this is needed for the law of large numbers in \eqref{e:lln}. Under our assumptions the spectral measure has an open set in its support, which is a sufficient condition for non-degeneracy (see Appendix~\ref{app:Gaussian}).

Our main result is the following:
\begin{theorem}[CLT for the component count]
\label{t:clt}
Suppose Assumption \ref{a:clt} holds. Let $\ell \in \R$ and $\star \in \{\mathrm{ES},\mathrm{LS}\}$. Then there exists $\sigma = \sigma_\star(\ell) \ge 0$ such that, as $R \to \infty$,
\begin{displaymath}
\frac{\Var[N_\star(R,\ell)]}{\mathrm{Vol}(\Lambda_R)}\to\sigma^2\qquad\text{and}\qquad \frac{N_\star(R, \ell) -  \E[N_\star(R,\ell)]}{\sqrt{\mathrm{Vol}(\Lambda_R)}}\xrightarrow{d} \sigma Z
\end{displaymath}
where $Z$ is a standard normal random variable.
\end{theorem}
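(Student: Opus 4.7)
The plan is to exhibit $N_\star(R,\ell) - \E N_\star(R,\ell)$ as a sum of martingale differences via the white noise representation \eqref{e:wnr}, and apply a martingale CLT (e.g.\ McLeish's). Fix an enumeration $B_1,\dots,B_m$, with $m \asymp \mathrm{Vol}(\Lambda_R)$, of the unit cubes of $\Z^d$ meeting a suitably thickened copy of $\Lambda_R$, write $W_i = W|_{B_i}$, and set $\mathcal{F}_i = \sigma(W_1,\ldots,W_i)$. Then
\begin{displaymath}
N_\star(R,\ell) - \E N_\star(R,\ell) = \sum_{i=1}^m D_i, \qquad D_i := \E[N_\star(R,\ell)\mid \mathcal{F}_i] - \E[N_\star(R,\ell)\mid \mathcal{F}_{i-1}],
\end{displaymath}
and the theorem reduces to verifying the two standard conditions: asymptotic variance convergence $\mathrm{Vol}(\Lambda_R)^{-1}\sum_i \E[D_i^2\mid \mathcal{F}_{i-1}] \to \sigma^2$ in probability, and a Lyapunov-type bound $\sum_i \E|D_i|^3 = o\bigl(\mathrm{Vol}(\Lambda_R)^{3/2}\bigr)$.

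The crux of the argument is a deterministic stability bound for $|D_i|$. Letting $W^{(i)}$ denote the noise with $W_i$ replaced by an independent copy, Jensen's inequality gives $|D_i|\le \E[\,|N_\star(R,\ell)(W) - N_\star(R,\ell)(W^{(i)})|\mid \mathcal{F}_i\,]$, so it suffices to control the component-count change under a local resampling. I would argue that any birth or death of an excursion (or level) set component under a continuous deformation of $f$ must involve a critical value being crossed near the perturbed region; consequently the change in $N_\star(R,\ell)$ is bounded, up to a polynomially decaying tail accounting for the non-compact support of $q$, by the number of critical points of $f$ and of the resampled field in a neighbourhood of $B_i$. Combined with Theorem \ref{t:tmb} (the third moment bound on critical points), this yields $\E|D_i|^3 = O(1)$ uniformly in $i$, giving the Lyapunov bound immediately from $\sum_i \E|D_i|^3 = O(R^d) = o(R^{3d/2})$.

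For the asymptotic variance, the stationarity of $W$ and $f$ (up to boundary effects of order $R^{d-1}$) together with the decay of $q$ mean that $\E[D_i^2 \mid \mathcal{F}_{i-1}]$ is, to leading order, a translation-invariant local functional of $W$ in the neighbourhood of $B_i$, with the tail error quantified by Assumption \ref{a:clt}. An $L^1$ ergodic theorem applied to the spatial average then produces almost sure convergence to a deterministic constant $\sigma^2 \ge 0$, which is simultaneously identified as the limit of $\Var[N_\star(R,\ell)]/\mathrm{Vol}(\Lambda_R)$.

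The main obstacle is implementing the stability bound in a form robust to the polynomial (rather than compactly supported) kernel $q$: a local resampling of $W_i$ perturbs $f$ everywhere, so in principle arbitrarily distant components can be affected, and the argument must show this long-range effect is negligible after cubing and summing. This is precisely where the decay exponent $\beta > 9d$ and $C^4$-smoothness in Assumption \ref{a:clt} are used; both are calibrated so that truncation errors from far-away critical points, processed through the third-moment estimate, remain uniformly $o(R^{3d/2})$, allowing the finite-range heuristic to survive in the tail regime.
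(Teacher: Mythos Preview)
Your overall architecture matches the paper's---white-noise martingale decomposition, resampling representation of the increments, stability via critical points, third-moment control, and an ergodic argument for the limiting variance---but two genuine gaps remain.

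First, your martingale runs over a \emph{finite} collection of cubes meeting a thickened $\Lambda_R$, yet $N_\star(R,\ell)$ is measurable with respect to the white noise on \emph{all} of $\R^d$. Hence $\sum_{i=1}^m D_i = \E[N_\star(R,\ell)\mid\mathcal{F}_m] - \E[N_\star(R,\ell)]$, which is not $N_\star(R,\ell)-\E N_\star(R,\ell)$; a tail term survives. The paper handles this by working with an \emph{infinite} lexicographic martingale array indexed by all of $\Z^d$ and proving a bespoke CLT for such arrays (its Theorem~\ref{t:Lex_MCLT}), together with a summability estimate $\sum_{\mathrm{dist}(v,D)>R}\E|\Delta_v(D)|^{2+\eps}\to 0$ (its Lemma~\ref{l:Bounded_moments}). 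Your truncation could be repaired along the same lines, but it is not free.

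Second, and more seriously, your variance step asserts that $\E[D_i^2\mid\mathcal{F}_{i-1}]$ is ``to leading order a translation-invariant local functional'', so that a spatial ergodic theorem applies. This is exactly where the choice of enumeration matters: for a generic ordering of the cubes the filtrations $\mathcal{F}_{i-1}$ are not shifts of one another, and there is no stationary sequence to which the ergodic theorem can be applied. The paper singles out the \emph{standard lexicographic order} on $\Z^d$ precisely because it satisfies $\mathcal{F}_{v+w}=\sigma(W_{u+w}:u\preceq v)$, which is what makes $(\E[\Delta_v(\R^d)\mid\mathcal{F}_v])_{v\in\Z^d}$ stationary (see \eqref{e:Stationarity}--\eqref{e:filtration_stationary} and the remark around \eqref{e:sigma}). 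You also need a \emph{stabilisation} step (the paper's Lemma~\ref{l:Stabilisation}) to show that $\Delta_v(D_n)$ converges almost surely as $D_n\to\R^d$, so that the ergodic limit $\sigma^2=\E\big[\E[\Delta_0(\R^d)\mid\mathcal{F}_0]^2\big]$ is well-defined and the bulk increments can be compared to their stationary counterparts. Without specifying the lexicographic filtration and proving stabilisation, the ergodic argument as you have sketched it does not go through.
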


We prove Theorem \ref{t:clt} by generalising an argument due to Penrose \cite{pen01}; the rough idea is to obtain a martingale representation for the component count by resampling portions of the white noise appearing in \eqref{e:wnr}, and apply a martingale CLT. Penrose developed this argument to study the number of clusters in classical percolation (among other applications). In our setting there are additional technical obstacles to overcome, since (i) resampling the white noise in a given region affects the values of the field, and a priori also the topology of the components, at arbitrarily large distances, and (ii) the component count has delicate stability properties in continuous space.

It is interesting to compare this approach to the strategy used to prove all previously known CLTs for `additive' geometric functionals of Gaussian field excursion sets (such as the volume or Euler characteristic) \cite{kl01,el16,an17,mul17,kv18}, which relied on expansions over Hermite polynomials. It appears very challenging to extend the `Hermite expansion' method to non-additive functionals such as the component count; by contrast, we believe that our approach extends naturally to a wider class of topological functionals, additive or otherwise. For instance, one could consider the number of components with a given diffeomorphism type~\cite{sw19}.

On the other hand, the Hermite expansion approach is `robust' in a different sense: it can be successfully applied to some strongly correlated fields which are excluded from our results. It remains a significant ongoing challenge to understand non-additive geometric functionals of strongly correlated fields.

\subsection{Positivity of the limiting variance}
The CLT stated in Theorem \ref{t:clt} does not guarantee that the limiting variance $\sigma^2 = \sigma^2_\star(\ell)$ is strictly positive, and if $\sigma = 0$ the result implies only that $\Var[N_\star(R,\ell)] = o(R^d)$.

Our next result confirms that $\sigma > 0$ under a mild additional condition on the field:

\begin{theorem}
\label{t:posvar}
Suppose that Assumption \ref{a:clt} holds, and in addition that $\int q(x) dx > 0$. Let $\star \in \{\mathrm{ES},\mathrm{LS}\}$ and $\ell \in \R$. Then 
\[ \sigma_\star(\ell) > 0 \]
where $\sigma_\star(\ell)$ is the constant from Theorem \ref{t:clt}. 
\end{theorem}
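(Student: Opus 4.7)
The goal is to establish a linear variance lower bound $\mathrm{Var}[N_\star(R,\ell)] \gtrsim R^d$, which together with Theorem~\ref{t:clt} forces $\sigma_\star(\ell) > 0$. I would exploit the martingale decomposition underlying Theorem~\ref{t:clt}: partition $\R^d$ into unit cells $(C_k)$, let $\mathcal{F}_k$ denote the $\sigma$-algebra generated by the white noise in the first $k$ cells, and set $D_k = \E[N_\star \mid \mathcal{F}_k] - \E[N_\star \mid \mathcal{F}_{k-1}]$. Orthogonality gives $\mathrm{Var}[N_\star(R,\ell)] = \sum_k \E[D_k^2]$, so it suffices to establish $\E[D_k^2] \geq c > 0$ uniformly for the $\Theta(R^d)$ cells deep in the bulk of $\Lambda_R$.

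The local lower bound would be proved via a pivotal-event construction using Cameron-Martin shifts. Fix a bulk cell $C$ with a ball $B \subset C$ of radius larger than the correlation length, and decompose $f = q\star W|_B + q\star W|_{B^c}$. The events $E^\pm = \{W|_B \text{ lies in a small } L^2\text{-neighbourhood of } \pm \kappa\, \ind_B\}$ each have positive probability, uniformly in $R$, and on $E^\pm$ the contribution $q\star W|_B$ is close to $\pm\kappa\,(q\star \ind_B)$. Here the assumption $\int q(x)\,dx > 0$ enters crucially: for $B$ chosen large compared to the effective support of $q$, $q \star \ind_B$ is bounded below by $\tfrac{1}{2} \int q > 0$ throughout the interior of $B$ while decaying to zero outside. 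Thus for $\kappa$ large, on $E^+$ the field is pushed well above $\ell$ throughout most of $B$, while on $E^-$ it is pushed well below. Now condition on a `good' event $G$, defined in terms of $W|_{B^c}$, on which $q\star W|_{B^c}$ is bounded in sup-norm by $\tfrac{1}{4} \kappa \int q$ on an enlargement $B' \supset B$; standard Gaussian estimates give $\P(G) \geq c > 0$ uniformly in $R$. On $G \cap E^+$ a new connected component of $\{f\geq\ell\}$ (homeomorphic to a ball, and so contributing one new level set component as well) appears inside $B$, while on $G \cap E^-$ no such component appears. Combined with $\P(E^\pm\mid G) \geq c > 0$, this yields $\E[D_k^2] \geq c > 0$ for the cell $C_k$ containing $B$.

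The main obstacle is controlling the far-field impact of the Cameron-Martin shift. Although $\ind_B$ is compactly supported, the resulting shift $\pm\kappa\,(q\star\ind_B)$ to the field decays only polynomially, at rate $|x|^{-\beta}$ by Assumption~\ref{a:clt}(2), and therefore modifies $f$ throughout $\Lambda_R$. One must argue that this far-field perturbation does not contaminate the local gap of $+1$ component created near $B$, i.e.\ that the net change in the component count outside $B'$ is $o(1)$ in expectation. A natural approach is a coupling argument: interpolate linearly between the $E^+$ and $E^-$ configurations and bound the expected number of topological changes by the number of critical points of $f$ with critical value within the perturbation size of $\ell$. The third moment bound on critical points (Theorem~\ref{t:tmb}), together with the polynomial decay $\beta > d$ (well within Assumption~\ref{a:clt}), should make this contribution vanish relative to the guaranteed local contribution of $1$. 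Making this coupling argument rigorous, uniformly in $R$, is the technical crux of the proof.
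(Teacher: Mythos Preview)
Your high-level strategy---lower bound $\E[D_k^2]$ via Cameron--Martin shifts on the white noise in a cell, then control the far-field perturbation using critical point bounds---is sound in spirit and correctly identifies why $\int q > 0$ matters. However, there are two concrete gaps, and the paper in fact takes a rather different route that sidesteps both.

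\textbf{Gap 1: the local construction does not produce a bounded component.} On $G \cap E^+$ you have $f > \ell$ in the core of $B$, but your event $G$ only bounds $|q\star W|_{B^c}|$ on $B'$; it does not force $f < \ell$ on any annulus separating the core from $\partial B'$. Since the shift $\kappa\, q\star\ind_B$ is (for $q \ge 0$, say) nonnegative everywhere, nothing prevents the high region inside $B$ from connecting to a pre-existing excursion component outside. Thus the claimed ``$+1$ component'' need not materialise. A fix would require strengthening $G$ to force $q\star W|_{B^c} < \ell - \epsilon$ on an annulus, and then checking that the shift is small enough there---but this in turn forces $B$ to be large relative to the effective support of $q$.

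\textbf{Gap 2: scale mismatch.} You place $B$ inside a unit cell yet ask that it be ``larger than the correlation length''. There is no reason these are compatible. The paper handles exactly this by observing that the CLT (and hence the formula $\sigma^2 = \E[(\E[\Delta_0(\R^d)\mid\mathcal{F}_0])^2]$) is unchanged if the white noise is decomposed over the rescaled lattice $m\Z^d$, and then takes $m$ large.

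\textbf{Comparison with the paper.} The paper's argument avoids constructing a single component altogether. Working with the limiting representation $\sigma^2 = \E[(\E[\Delta_0(\R^d)\mid\mathcal{F}_0])^2]$, it projects onto the mean $Z_0$ of the white noise in the (rescaled) cell $\Lambda_m$, so that $\sigma^2 \ge \Var[F(Z_0)]$ with $F(z) = \E[\Delta_0(\R^d)\mid Z_0=z]$. By Cameron--Martin, $\E[F(Z_0+s)]$ equals $\lim_n \E[N_\star(\Lambda_n, f + s\,\bar q\star\ind_{\Lambda_m})] - \E[N_\star(\Lambda_n, f)]$. Since $\bar q\star\ind_{\Lambda_m}$ is close to the constant $1$ well inside $\Lambda_m$, this difference is essentially $(2m)^d(\mu_\star(\ell-s) - \mu_\star(\ell))$ plus boundary terms of order $m^{d-1}$; because $\mu_\star > 0$ everywhere and $\mu_\star(\ell) \to 0$ as $\ell \to \infty$, one can choose $s$ so that $\mu_\star(\ell-s) - \mu_\star(\ell) < 0$, making the difference nonzero for $m$ large. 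The far-field control you worry about is handled by a stabilisation lemma (Lemma~\ref{l:stab2}) combined with dominated convergence via the third-moment bound. The upshot: a \emph{volume-order} change in the expected count, which trivially dominates boundary effects, rather than a delicate ``$+1$'' that must survive far-field noise.
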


In particular, for the Bargmann-Fock field this result confirms that $\Var[N_\star(R,\ell)]$ is of volume order for \textit{all} levels, whereas previously this was only known (in the planar case) for levels such that $\frac{d}{d\ell} \mu_\star(\ell) \neq 0$ \cite{bmm22}, which is necessarily violated at (at least) one level.

To prove Theorem \ref{t:posvar} we exploit a (semi)-explicit representation for $\sigma$ (see \eqref{e:sigma}) that is a by-product of the proof of the CLT. While the `Hermite expansion' approach for additive functionals also gives a (semi)-explicit representation for the limiting variance, it has proved difficult to verify its positivity in practice. A representative example is \cite{el16} on the Euler characteristic, where the limiting variance is only shown to be positive for levels $\ell$ such that $H_d(\ell) \neq 0$, where $H_d$ is the degree-$d$ Hermite polynomial (which has $d$ zeros). Since our approach also works for additive functionals such as the Euler characteristic, we believe it gives a more tractable route to establishing strict positivity at \textit{all} levels.

\begin{remark}
The condition that $\int q(x) dx > 0$ is equivalent to either $\int K(x) dx \neq 0$ or $\rho(0) > 0$, where $\rho = \mathcal{F}^{-1}[K]$ is the spectral density of the field. While we do not expect this condition to be necessary, it is quite natural given that our proof generates fluctuations in the component count by exploiting level shifts. Indeed, this is precisely the condition under which level shifts can be well-approximated in the Cameron-Martin space of the field. An identical condition appeared in our previous study of fluctuations of the component count~\cite{bmm22}.
\end{remark}

\subsection{Third moment bounds}
A major ingredient in the proof of Theorem \ref{t:clt} is a third moment bound on the number of critical points of the field inside a compact domain; this extends existing results in dimension $d=1$ \cite{bel66,cuz75,cuz78}, and also \textit{second} moment bounds valid in all dimensions \cite{eli85,ef16}. See also \cite{mv89} for related results on third moment bounds for zeros of Gaussian vector fields, although these do not apply to critical points. Since we believe this bound may be useful in other applications, we highlight it here.

We establish this result under much more general conditions than Theorem \ref{t:clt}, and in particular we do not require any assumption on the decay of $K$. Recall that the \textit{spectral measure} of $f$ is the finite measure $\mu$ on $\R^d$ such that $K = \mathcal{F}[\mu]$.

\begin{assumption} 
\label{a:gen} $f$ is $C^4$-smooth and the support of $\mu$ contains an open set.
\end{assumption}

The condition on the support of $\mu$ is easily verified for short-range correlated fields; in particular Assumption \ref{a:clt} implies Assumption \ref{a:gen} (see Lemma \ref{l:wnr} for details).

\begin{theorem}[Third moment bound for critical points]
\label{t:tmb}
Let $\tau > 0$ and let $p \in C^4(\R^d)$ be a deterministic function such that $\|p\|_{C^4(\R^d)} \le \tau$. Suppose Assumption \ref{a:gen} holds and let $N_c(R)$ denote the number of critical points of $f+p$ contained in $\Lambda_R$. Then there exists a $c > 0$ (depending only on $f$ and $\tau$) such that, for $R \ge 1$,
\[ \E[N_c(R)^3] \le c R^{3d} .\]
\end{theorem}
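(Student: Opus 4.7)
I would apply the Kac--Rice formula for factorial moments. Under Assumption \ref{a:gen}, the open-support condition on $\mu$ ensures that $\nabla f$ is non-degenerate at any finite tuple of distinct points and that $f$ is a.s.\ Morse; hence (see e.g.~\cite{esfo16}) one has
\[ \E[N_c(R)^{[3]}] = \int_{\Lambda_R^3} \rho_3(x_1,x_2,x_3)\, dx_1\, dx_2\, dx_3, \]
where $N_c(R)^{[3]} := N_c(R)(N_c(R)-1)(N_c(R)-2)$ and
\[ \rho_3(x_1,x_2,x_3) = \phi(x_1,x_2,x_3)\,\E\!\left[\prod_{j=1}^3 |\det\nabla^2(f+p)(x_j)| \,\Big|\, \nabla f(x_j)=-\nabla p(x_j),\ j=1,2,3\right], \]
with $\phi(x_1,x_2,x_3)$ the joint density of $(\nabla f(x_1),\nabla f(x_2),\nabla f(x_3))$ at $(-\nabla p(x_j))_j$. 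Since $N_c(R)^3 = N_c(R)^{[3]}+3N_c(R)^{[2]}+N_c(R)$ and the second factorial moment is $O(R^{2d})$ by \cite{eliz85,esfo16} (plus a routine extension to accommodate the bounded drift $p$), the problem reduces to proving $\int_{\Lambda_R^3}\rho_3 \le c R^{3d}$.

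\textbf{Well-separated regime.} I would split $\Lambda_R^3$ by $\delta_{\min}:=\min_{i\neq j}|x_i-x_j|$ against a small fixed $\delta>0$. On $\{\delta_{\min}\ge\delta\}$, the open-support condition on $\mu$, together with stationarity, provides a uniform lower bound on $\det\Cov(\nabla f(x_1),\nabla f(x_2),\nabla f(x_3))$, so $\phi$ is uniformly bounded. The conditional expectation is bounded by a universal constant via Cauchy--Schwarz and Gaussian moment estimates for the Hessian entries (the deterministic drift contributes only a bounded term since $\|p\|_{C^4}\le\tau$). This region therefore contributes $O(R^{3d})$.

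\textbf{Clustered regime and cancellation.} On $\{\delta_{\min}<\delta\}$, translation invariance reduces the task to showing that
\[ I(x_1):=\int_{|y|,|z|\le\delta}\rho_3(x_1,x_1+y,x_1+z)\, dy\, dz \]
is bounded uniformly in $x_1$, since the full integral then contributes only $O(R^d)$, which is subleading. To handle the singularity of $\rho_3$ as $y,z\to 0$ I would pass to the divided-difference variables
\[ V_y:=\nabla f(x_1+y)-\nabla f(x_1),\qquad V_z:=\nabla f(x_1+z)-\nabla f(x_1) \]
(which leaves the Jacobian equal to $1$) and Taylor expand using the $C^4$-smoothness of $f$:
\[ V_y=\nabla^2 f(x_1)\,y+\tfrac12\nabla^3 f(x_1)[y,y]+O(|y|^3), \]
and analogously for $V_z$. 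The leading covariance of $V_y$ (resp.\ $V_z$) scales as $|y|^2$ (resp.\ $|z|^2$), producing a density singularity; but the Hessian factors responsible are exactly those appearing in the $|\det\nabla^2 f(x_j)|$ terms of the conditional expectation, and this cancellation is what yields integrability.

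\textbf{Main obstacle.} The principal difficulty is that the leading-order Taylor approximation $(V_y,V_z)\approx(\nabla^2 f(x_1)\,y,\nabla^2 f(x_1)\,z)$ is always degenerate: the symmetry of the Hessian forces the identity $z^T(\nabla^2 f(x_1)\,y)=y^T(\nabla^2 f(x_1)\,z)$, so the joint law of $(V_y,V_z)\in\R^{2d}$ has covariance rank at most $2d-1$. To correct this one must retain the quadratic Taylor term $\nabla^3 f(x_1)[y,y]$, whose variance in the degenerate direction is non-trivial by the open-support condition applied to higher-order spectral moments. This is precisely where the $C^4$-smoothness hypothesis is essential---both to justify the second-order Taylor expansion with a controlled remainder and to give a.s.\ boundedness of the third derivatives of $f$. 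Partitioning $\{(y,z):|y|,|z|\le\delta\}$ further according to the angle between $y$ and $z$ (to handle additional degeneracies in the near-collinear subregion) and performing appropriate anisotropic rescalings, I would verify that the transformed Kac--Rice integrand is bounded by an integrable function on each sub-region, completing the uniform bound on $I(x_1)$ and hence the theorem.
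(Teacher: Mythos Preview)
Your overall strategy is on the right track and matches the paper's: Kac--Rice for the third factorial moment, divided differences to handle the diagonal singularity, and the key observation that the leading Taylor term $(\nabla^2 f)\,y$ produces a rank-deficient covariance (forcing you to go to third derivatives and to split by the angle between the difference vectors). That diagnosis of the ``hard part'' is exactly what the paper's Lemma~\ref{l:divided_difference} formalises.

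However, there is a genuine gap in your \emph{well-separated regime}. You assert that the open-support condition on $\mu$ gives a uniform lower bound on $\DC(\nabla f(x_1),\nabla f(x_2),\nabla f(x_3))$ whenever all pairwise distances exceed $\delta$. Assumption~\ref{a:gen} guarantees only \emph{pointwise} non-degeneracy (Lemma~\ref{l:nondegen}); it imposes no decay of correlations, and a singular spectral measure can have an open set in its support. Over the non-compact set $\{(x_1,x_2,x_3)\in\Lambda_R^3:\delta_{\min}\ge\delta\}$ there is no compactness argument available, and without correlation decay the determinant could in principle become arbitrarily small along sequences of well-separated configurations. The same issue contaminates your ``two close, one far'' sub-case, which your decomposition does not even cover (your $I(x_1)$ integrates only over $|y|,|z|\le\delta$, whereas $\{\delta_{\min}<\delta\}$ allows the third point to be far away).

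The paper avoids both problems with a short but essential preliminary reduction: by covering $\Lambda_R$ with translates of a fixed cube $\Lambda_\eps$ and applying H\"older, one gets
\[
\E[N_c(R)^3]\le (cR/\eps)^{3d}\sup_{x}\E\big[N_c(x+\Lambda_\eps)^3\big],
\]
so the entire analysis takes place in a \emph{fixed compact} box (with the translate of $p$ absorbed into the uniform $C^4$ bound). Now every configuration of three points lies in a bounded set, the off-diagonal determinant lower bound follows from continuity and compactness, and the only work is the near-diagonal analysis you outlined. Adding this reduction at the start of your argument closes the gap.
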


We allow for the addition of the smooth function $p$ primarily because it is needed in the proof of Theorem \ref{t:posvar}, although we believe it to be of independent interest (see \cite{cuz78} for similar results in the $d=1$ case). The dependence on $\|p\|_{C^4(\R^d)}$ can be quantified, see Remark~\ref{r:const}.

Since $N_\star(R,\ell) \le N_c(R)$, an immediate consequence of Theorem \ref{t:tmb} is a third moment bound on the component count:

\begin{corollary}[Third moment bound for the component count]
\label{c:tmb}
Suppose that Assumption~\ref{a:gen} holds, then there exists a $c > 0$ such that, for $\star \in \{\mathrm{ES},\mathrm{LS}\}$, $\ell \in \R$, and $R \ge 1$, 
\[ \E[ N_\star(R,\ell)^3 ] \le c R^{3d} .\]
\end{corollary}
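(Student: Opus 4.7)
The corollary is a direct consequence of Theorem \ref{t:tmb} (applied with $p \equiv 0$), once one establishes the pointwise inequality $N_\star(R,\ell) \le N_c(R)$ almost surely. This is already asserted in the text just above the corollary, so the proof plan is essentially to make the geometric comparison precise and then invoke the third moment bound on critical points.

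For the excursion case $\star = \mathrm{ES}$, a component of $\{f \ge \ell\}$ that is contained in $\Lambda_R$ (hence compact and disjoint from $\partial \Lambda_R$) is a compact set on which the continuous function $f$ attains its maximum at some interior point; at any such interior maximum $x$ one has $\nabla f(x) = 0$, so $x \in \Lambda_R$ is a critical point of $f$. Distinct components produce distinct critical points, so the number of excursion components in $\Lambda_R$ is at most $N_c(R)$. For the level case $\star = \mathrm{LS}$, a connected component $\Gamma$ of $\{f = \ell\}$ contained in $\Lambda_R$ bounds a bounded open region $U \subset \Lambda_R$ with $f \equiv \ell$ on $\partial U = \Gamma$; on $\overline{U}$ the function $f$ attains both its maximum and its minimum, and unless $f \equiv \ell$ on $\overline{U}$ at least one of these extrema lies in the interior $U$ and is therefore a critical point of $f$ in $\Lambda_R$. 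The exceptional event that $f$ is constant on some open set has probability zero under Assumption \ref{a:gen} (the support of the spectral measure contains an open set, so $f$ is nondegenerate enough to rule this out), giving $N_\mathrm{LS}(R,\ell) \le N_c(R)$ almost surely.

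With the pointwise bound in hand, monotonicity of expectation gives
\[ \E\bigl[N_\star(R,\ell)^3\bigr] \le \E\bigl[N_c(R)^3\bigr] \le c R^{3d} \qquad (R \ge 1), \]
where the second inequality is Theorem \ref{t:tmb} applied to $f$ with the trivial deterministic perturbation $p \equiv 0$ (which clearly satisfies $\|p\|_{C^4(\R^d)} \le \tau$ for any $\tau > 0$). This yields the claimed bound with a constant $c$ depending only on the law of $f$.

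The only substantive point, and the one I would expect to need the most care to write cleanly, is the level-set inequality: one must argue that a component of $\{f = \ell\}$ that does not touch $\partial \Lambda_R$ really bounds a region inside $\Lambda_R$ (so that the enclosed extremum is itself in $\Lambda_R$), and that the null event $\{f \equiv \ell \text{ on some open set}\}$ can be excluded under Assumption \ref{a:gen}. Both facts are standard for smooth Gaussian fields with nondegenerate spectrum, so after recording them the corollary follows immediately.
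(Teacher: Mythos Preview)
Your approach matches the paper's: the corollary is stated as an immediate consequence of the pointwise inequality $N_\star(R,\ell)\le N_c(R)$ together with Theorem~\ref{t:tmb} applied with $p\equiv 0$, and that is exactly what you do. The paper does not supply any further argument beyond the sentence preceding the corollary.

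One small point to tighten in your level-set argument: you produce, for each component $\Gamma$ of $\{f=\ell\}$, a critical point in the bounded region $U_\Gamma$ it encloses, but you do not verify that \emph{distinct} components yield \emph{distinct} critical points. As written this can fail, since nested components $\Gamma_1\subset U_{\Gamma_2}$ may both ``surround'' the same innermost extremum. The standard fix is to associate to $\Gamma$ a critical point in the shell $U_\Gamma\setminus\bigcup_{\Gamma'}\overline{U_{\Gamma'}}$, the union taken over components $\Gamma'$ immediately nested inside $\Gamma$; on this shell $f-\ell$ has constant sign and equals zero on the boundary, so an interior extremum exists, and the shells for different $\Gamma$ are disjoint. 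The paper's own justification (``each level set component surrounds at least one critical point'') is equally informal on this point, so your argument is at the same level of rigour, but if you want a complete proof this injectivity step should be recorded.
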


\begin{remark}
\label{r:nondegen}
The assumption that $\mu$ has an open set in its support can be weakened considerably; we use it only to guarantee that the following Gaussian vectors are non-degenerate for all distinct $x,y \in \R^d\setminus\{0\}$ and linearly independent vectors $v,w \in \mathbb{S}^{d-1}$:
\begin{enumerate}
    \item $(\nabla f(0), \nabla f(x), \nabla f(y))$;
     \item $(\nabla f(0), \nabla^2 f(0), \nabla f(x))$;   
    \item $ (\nabla f(0), \partial_v \nabla f(0), \partial^2_v \nabla f(0) ) $;
     \item $ (\nabla f(0),\nabla^2 f(0),\partial_v^2\partial_w f(0),\partial_v\partial_w^2f(0)) $.
\end{enumerate}
We expect that these are non-degenerate for a much wider class of stationary Gaussian fields (e.g.\ monochromatic random waves, for which $\mu$ is supported on the sphere $\mathbb{S}^{d-1}$), which would expand the scope of Theorem \ref{t:tmb}.
\end{remark}

\subsection{Outline of the paper}
In Section \ref{s:stability} we undertake a preliminary study of the stability properties of the component count. In Section \ref{s:clt} we establish the CLT stated in Theorem \ref{t:clt} and also the positivity of the limiting variance in Theorem \ref{t:posvar}. In Section \ref{s:dbcp} we study the density of critical points, and in particular prove the third moment bound in Theorem \ref{t:tmb}. Finally in the appendix we collect some technical statements about Gaussian fields and prove a topological lemma which was used in Section \ref{s:stability}.

\subsection{Acknowledgements}
The authors thank Naomi Feldheim and Rapha\"{e}l Lachi\`{e}ze-Rey for referring us to \cite{kl06} and \cite{mv89} respectively. We also thank an anonymous referee for referring us to \cite{al21}, as well as pointing out some omissions in the proof of Theorem~\ref{t:clt_gen} in a previous version of this manuscript. The second author was supported by the European Research Council (ERC) Advanced Grant QFPROBA (grant number 741487). The third author was supported by the Australian Research Council (ARC) Discovery Early Career Researcher Award DE200101467, and also acknowledges the hospitality of the Statistical Laboratory, University of Cambridge, where part of this work was carried out.

\medskip

\section{Stability of the component count}
\label{s:stability}

In this section we study the stability of the component count under perturbations. The resulting estimates will play an important role in the proof of the CLT in Section \ref{s:clt}.

\subsection{Stratified domains and critical points}
 
 A \textit{box} is any set of the form $\mathcal{R} = [a_1, b_1] \times \cdots \times [a_d, b_d] \subset \R^d$, for finite $a_i \le b_i$. Rather than work in the fullest possible generality, we restrict our attention to the stability properties of sets $D \subset \R^d$ of the form $D = D(\mathcal{R};V) = \mathcal{R} \cap ( \cup_{v \in V} B_v)$, where $\mathcal{R}$ is a box, $V \subset \mathbb{Z}^d$ is a non-empty finite subset, and $B_v = v + [0,1]^d$ denotes the translated unit cube. We refer to such sets $D$ as \textit{domains}, although we emphasise that they need not be connected. In particular $\Lambda_R = [-R,R]^d$ is of this form.

We may view any such domain $D$ as a \textit{stratified set} as follows. Recall that every box has a \textit{canonical stratification}, that is, a partition into the finite collection $\mathcal{F} = (F_i)$ of its open faces of all dimensions $0 \le m \le d$. For each cube $B_v$, $v \in \Z^d$, we denote by $\mathcal{F}_{\mathcal{R};v}$ the canonical stratification of $\mathcal{R} \cap B_v$. Then $\mathcal{F}_\mathcal{R;V} = \cup_{v \in V} \mathcal{F}_{\mathcal{R};v}$ defines a partition of $D = D( \mathcal{R};V)$, which we refer to as its \textit{stratification}.

A \textit{stratified domain} will be any $D = D(\mathcal{R};V)$ equipped with the stratification $\mathcal{F} = \mathcal{F}_{\mathcal{R};V}$.  We will occasionally need to distinguish strata of dimension $m=0$ (i.e.\ the vertices of ${\mathcal{R} \cap B_v}$, $v \in \Z^d$), which we denote by $\mathcal{F}_0 \subset \mathcal{F}$. Note that, for any stratified domain $D$ and any $v \in \Z^d$, the restriction $D \cap B_v$ may also be considered as a stratified domain with stratification $\mathcal{F}_{\mathcal{R};v}$. See Figure \ref{fig:Strat} for an example of a stratified domain in $d=2$.

\begin{figure}[ht]
    \centering
    \begin{tikzpicture}
\draw[black!20,step=0.5] (0.25,0.25) grid (7.75,3.75);
\node[] at (8,3.5)  {$\mathbb{Z}^2$};
\draw[gray, dashed] (0.75,0.75) rectangle (7.25,3.25);
\draw[gray] (7.3,2.75) -- (8,2.5);
\node[right] at (8,2.5) {$\mathcal{R}$};
\begin{scope}
\clip (0.75,0.75)--(2,0.75)--(2,1.5)--(3,1.5)--(3,3.25)--(2,3.25)--(2,3)--(1,3)--(1,2)--(1.5,2)--(1.5,2.5)--(2.5,2.5)--(2.5,2)--(1.5,2)--(1.5,1.5)--(0.75,1.5)--cycle;
\fill[gray!20] (0.75,0.75) rectangle (7.25,3.25);
\draw[thick,step=0.5] (0.25,0.25) grid (7.75,3.75);
\end{scope}
\draw[thick](0.75,0.75)--(2,0.75)--(2,1.5)--(3,1.5)--(3,3.25)--(2,3.25)--(2,3)--(1,3)--(1,2)--(1.5,2)--(1.5,2.5)--(2.5,2.5)--(2.5,2)--(1.5,2)--(1.5,1.5)--(0.75,1.5)--cycle;
\foreach \x in {(0.75,0.75),(1,0.75),(1.5,0.75),(2,0.75),(0.75,1),(1,1),(1.5,1),(2,1),(2,1.5),(3,1.5),(3,3.25),(2,3.25),(2,3),(1,3),(1,2),(1.5,2),(1.5,2.5),(2.5,2.5),(2.5,2),(1.5,2),(1.5,1.5),(0.75,1.5),(1,1.5),(2.5,1.5),(3,2),(3,2.5),(3,3),(2.5,3),(2.5,3.25),(2,2),(2,2.5),(1.5,3),(1,2.5)} {
\fill \x circle (1.5pt);
}

\begin{scope}
\clip (4,1)--(6,1)--(6,2)--(6.5,2)--(6.5,0.75)--(7.25,0.75)--(7.25,2.5)--(5.5,2.5)--(5.5,3)--(4.5,3)--(4.5,1.5)--(4,1.5)--cycle;
\fill[gray!20] (0.75,0.75) rectangle (7.25,3.25);
\draw[thick,step=0.5] (0.25,0.25) grid (7.75,3.75);
\end{scope}
\draw[thick] (4,1)--(6,1)--(6,2)--(6.5,2)--(6.5,0.75)--(7.25,0.75)--(7.25,2.5)--(5.5,2.5)--(5.5,3)--(4.5,3)--(4.5,1.5)--(4,1.5)--cycle;
\foreach \x in{4.5,5,...,7,7.25} {
\foreach \y in {1,1.5,2,2.5} {
\fill (\x,\y) circle (1.5pt);
}
}
\foreach \x in {(4,1),(4,1.5), (4.5,3),(5,3),(5.5,3),(6.5,0.75),(7,0.75),(7.25,0.75)} {
\fill \x circle (1.5pt);
}

\end{tikzpicture}
    \caption{An example of a stratified domain in $d=2$; the dashed lines show the boundary of $\mathcal{R}$ while the shaded region, thick lines and circles show the stratification of a domain $D$.}
    \label{fig:Strat}
\end{figure}
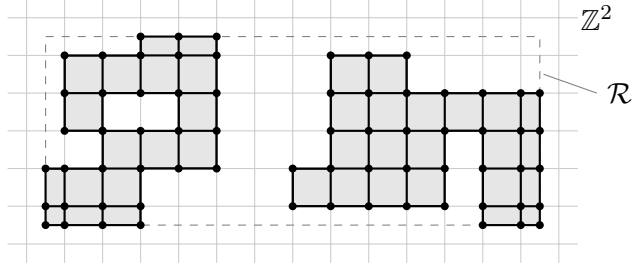

Let $D$ be a stratified domain and let $U$ be a compact set which contains an open neighbourhood of $D$. For $x \in D$ and $g \in C^1(U)$, $\nabla_F g(x)$ denotes the derivative of $g$ restricted to the unique stratum $F$ containing $x$. A \textit{(stratified) critical point} of $g$ is a point $x \in F$ such that $\nabla_F g(x) = 0$. The \textit{level} of this critical point is the value $g(x)$. By convention, all $x$ belonging to strata of dimension $m=0$ are considered critical points. The number of stratified critical points is denoted by $N_c(D,g)$. 

\subsection{Stability of the component count}
We next study the stability of the component count on stratified domains under small perturbations. In the following section we will apply these estimates to Gaussian fields.

Extending our previous notation, for a function $g \in C^1(U)$ we let $N_\star(D,g,\ell)$ denote the component count of $g$ at level $\ell$ inside $D$, i.e.\ the number of connected components of $\{g \ge \ell\}$ (if $\star=\mathrm{ES}$) or $\{g = \ell\}$ (if $\star = \mathrm{LS}$) which intersect $D$ but not its boundary.

For a pair of functions $(g,p) \in C^1(U) \times C^1(U)$, we say that $(g,p)$ is \textit{stable} (on $D$ at level $\ell$) if the following holds (and \textit{unstable} otherwise): \begin{itemize}
    \item For every $x \in F \in \mathcal{F} \setminus \mathcal{F}_0$, either
\begin{equation}\label{e:stable}
\lvert g(x)-\ell\rvert\geq 2\lvert p(x)\rvert \quad \text{or} \quad |\nabla_F g(x)|\geq 2|\nabla_F p(x)| ;
\end{equation}
\item For every $x \in F \in \mathcal{F}_0$,
\[  \lvert g(x)-\ell\rvert\geq 2\lvert p(x)\rvert . \]
\end{itemize}
This notion of stability implies that the component count is unchanged under perturbation:

\begin{lemma}
\label{l:Topological_stability}
Let $g,p \in C^2(U)$. Then if $(g,p)$ is stable (on $D$ at level $\ell$), 
\begin{displaymath}
N_\star(D,g,\ell)=N_\star(D,g+p,\ell) .
\end{displaymath}
\end{lemma}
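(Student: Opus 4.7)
The plan is to deform $g$ continuously to $g+p$ via the linear homotopy $g_t = g + tp$, $t \in [0,1]$, and to show that $t \mapsto N_\star(D, g_t, \ell)$ is locally—and hence globally—constant on $[0,1]$.

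The first step is a propagation-of-stability property: if $(g, p)$ is stable on $D$ at level $\ell$, then for every $t \in [0, 1]$ the pair $(g_t, (1-t)p)$ is also stable. This is a direct triangle-inequality calculation: at a vertex $x \in \mathcal{F}_0$ one estimates $|g_t(x) - \ell| \geq |g(x) - \ell| - t|p(x)| \geq (2-t)|p(x)| \geq 2(1-t)|p(x)|$, and the analogous bound holds at each point of a positive-dimensional stratum for whichever of the level or gradient conditions happens to be satisfied there. Thus it suffices to prove the following local-constancy statement: whenever $(g, p)$ is stable, $N_\star(D, g, \ell) = N_\star(D, g + sp, \ell)$ for all $s$ in some neighbourhood of $0$.

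For local constancy I would construct a stratum-preserving ambient isotopy $\phi_s : D \to D$ (relative to $\partial D$) carrying $\{g \geq \ell\} \cap D$ onto $\{g + sp \geq \ell\} \cap D$, and similarly for level sets. Partition $D$ into the level-dominated region $D_L := \{x \in D : |g(x) - \ell| \geq 2|p(x)|\}$ and its complement $D_G := D \setminus D_L$; note $D_G$ is disjoint from $\mathcal{F}_0$ by the vertex condition, and stability forces the gradient condition $|\nabla_F g(x)| \geq 2|\nabla_F p(x)|$ to hold at every $x \in D_G \cap F$ for each positive-dimensional stratum $F$. On $D_L$ the triangle inequality gives $|g_s(x) - \ell| \geq |p(x)|$ and so the sign of $g_s - \ell$ is fixed throughout $s \in [0,1]$; here both the excursion and level sets are literally unchanged and $\phi_s$ can be taken to be the identity. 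On $D_G \cap F$, the gradient bound keeps $\nabla_F g_s$ uniformly away from zero on a neighbourhood of the level set, so $\{g_s = \ell\} \cap F$ is a smooth transverse submanifold of $F$; it can be moved along the normal-flow vector field $X_s^F = -p \nabla_F g_s / |\nabla_F g_s|^2$, which by design satisfies $\partial_s(g_s \circ \phi_s) \equiv 0$ on the level set.

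The principal obstacle is gluing these stratum-wise motions into a single continuous, stratum-preserving isotopy, especially across the lower-dimensional strata where different pieces of $D_G$ meet. The vertex condition at the $0$-dimensional strata is crucial here: it forces the sign of $g_s(x) - \ell$ to be constant in $s$ at every vertex, which prevents any level crossing—and hence any topological event (components being created, destroyed, merged, or escaping through $\partial D$)—from occurring through a vertex. This localises all potential changes in topology to positive-dimensional strata, where they are ruled out by the gradient condition, and allows the stratum-wise isotopies to be spliced consistently (presumably via the topological gluing lemma advertised for the appendix). Combined with the propagation-of-stability property and compactness of $[0,1]$, the local-constancy statement can then be iterated along the homotopy, yielding $N_\star(D, g, \ell) = N_\star(D, g + p, \ell)$ as claimed.
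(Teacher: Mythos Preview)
Your overall architecture---linear homotopy $g_t=g+tp$, the normal flow $-p\,\nabla g_s/|\nabla g_s|^2$, local constancy plus compactness of $[0,1]$---is exactly what the paper does, and your propagation-of-stability observation is equivalent to the paper's statement that no $g_t$ has a stratified critical point at level $\ell$. The difference is in how the isotopy is built, and here the paper takes a shorter path that dissolves the obstacle you flag rather than confronting it.

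You aim for a stratum-preserving ambient isotopy of all of $D$, partitioning into a level-dominated region $D_L$ (identity) and a gradient-dominated region $D_G$ (flow), and then worry about splicing these pieces across lower-dimensional strata. The paper instead observes that only \emph{interior} components contribute to $N_\star$, and shows directly that these never approach $\partial D$ during the homotopy: working in $D\times[0,1]$, the zero locus of $(x,t)\mapsto g_t(x)-\ell$ is a $C^1$ submanifold (by the implicit function theorem, since the absence of quasi-critical points forces the full space--time gradient to be nonzero), and if a component of this locus emanating from an interior component of $\{g_0=\ell\}$ first touched $(\partial D)\times[0,1]$ at some time $\tau$, the level set $\{g_\tau=\ell\}$ would be tangent to a boundary stratum there, producing a stratified critical point of $g_\tau$ at level $\ell$---a contradiction. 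Hence the interior level sets live entirely in the top-dimensional open stratum for all $t$, the flow can be run there using the ordinary gradient $\nabla g_t$ (bounded below on a neighbourhood by compactness), and no gluing across strata is ever required. The excursion-set statement then follows from the isotopy extension lemma.

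So your ``principal obstacle'' is real for the route you chose, but the paper sidesteps it: there is no gluing lemma in the appendix, only the observation that interior components are insulated from the stratification. Your $D_L/D_G$ decomposition is not needed once you restrict attention to interior components.
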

The proof of this result is given in Appendix~\ref{s:top}. To explain the intuition, consider the interpolation $g + t p$ for $t \in [0,1]$. The pair $(g,p)$ being stable implies that there are no values of $t$ at which $g + t p$ has a critical point at level $\ell$. In that case the level sets $\{g + t p = \ell\}$ deform continuously as $t$ varies, and the component count does not change. This argument also applies to excursion sets since these have level sets as their boundaries.

A consequence is that we can bound the \textit{change} in the component count under perturbation by the total number of stratified critical points in unstable cubes: 

\begin{lemma}\label{l:stab3}
Let $g,p \in C^2(U)$. Then
\begin{displaymath}
\lvert N_\star(D,g,\ell)-N_\star(D,g+p,\ell)\rvert \leq \sum_{v\in\mathcal{U}}\left(N_c(D\cap B_v,g) + N_c(D\cap B_v,g+p)\right),
\end{displaymath}
where 
\[ \mathcal{U}:=\{v\in V  \, : \, (g,p)\text{ is unstable on }D \cap B_v \text{ at level $\ell$}\}. \]
\end{lemma}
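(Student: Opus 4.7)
The plan is to combine Lemma~\ref{l:Topological_stability} with a homotopy argument: interpolating linearly from $g$ to $g + p$, I aim to localise the topological transitions of the component count to unstable cubes and bound their number by the critical point counts of the endpoints.

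Consider $g_t := g + tp$ for $t \in [0,1]$. By a stratified Morse argument analogous to the one underpinning Lemma~\ref{l:Topological_stability}, the map $t \mapsto N_\star(D, g_t, \ell)$ is piecewise constant, with jumps possible only at ``critical times'' at which $g_t$ has a stratified critical point at level $\ell$ in $D$; away from such times, the level and excursion sets deform by an ambient isotopy and the count is preserved.

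The stability condition rules out critical times arising in stable cubes. For each $x$ in a positive-dimensional stratum of $D \cap B_v$ with $v \notin \mathcal{U}$ and each $t \in [0,1]$, stability implies $|g_t(x) - \ell| \geq |p(x)|$ or $|\nabla_F g_t(x)| \geq |\nabla_F p(x)|$, and (after removing degenerate equality cases via a genericity/approximation argument) neither permits a stratified critical point of $g_t$ at level $\ell$ in $D \cap B_v$; the dimension-$0$ strata are handled by the second clause of the stability definition. Hence all critical events are confined to cubes $B_v$ with $v \in \mathcal{U}$.

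It remains to bound the number of critical events in each such cube by $N_c(D \cap B_v, g) + N_c(D \cap B_v, g + p)$. Each event creates or destroys a component, which contains a local extremum of the corresponding endpoint function — $g$ at $t = 0$ if the component is destroyed along the interpolation, or $g + p$ at $t = 1$ if it is newly created — and this extremum is a stratified critical point in the same cube. Summing over $v \in \mathcal{U}$ yields the inequality. The chief obstacle is this final charging argument: verifying that the local extremum of each transient component lies in the \emph{same} cube as the transition, which requires a case analysis of the one-parameter stratified Morse transition types (creation, annihilation, merger, split), combined with a genericity reduction to avoid simultaneous transitions.
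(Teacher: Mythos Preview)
Your interpolation idea is natural, but the final charging step has a genuine gap that you yourself flag, and I do not see how to close it. The difficulty is that the total number of Morse transitions along $t\mapsto g_t$ need not be controlled by the critical points of the \emph{endpoints} $g$ and $g+p$. A component may be born at some $t_1\in(0,1)$ and die at some $t_2\in(t_1,1)$, never existing at $t=0$ or $t=1$; there is no endpoint extremum to charge either of these two events to. Likewise a single component present at $t=0$ may undergo many splits and mergers before $t=1$, each an event, while contributing only one critical point of $g$. So bounding the number (or even the net effect) of events cube-by-cube by $N_c(D\cap B_v,g)+N_c(D\cap B_v,g+p)$ does not follow from the picture you sketch. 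The localisation of events to unstable cubes is fine; it is the bookkeeping that breaks.

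The paper avoids the interpolation entirely with a static decomposition. Set $D'=D\cap\bigl(\cup_{v\in\mathcal U}B_v\bigr)$. Any component of $\{g\ge\ell\}$ (or $\{g=\ell\}$) contained in $D$ either lies in $D\setminus D'$ or meets $D'$; the number of the latter is at most $N_c(D',g)$, since each such component, intersected with $D'$, attains a maximum at some stratified critical point of $g$ in $D'$. Hence
\[
\bigl|N_\star(D,g,\ell)-N_\star(D\setminus D',g,\ell)\bigr|\le N_c(D',g)\le\sum_{v\in\mathcal U}N_c(D\cap B_v,g),
\]
and the same with $g+p$ in place of $g$. Now apply Lemma~\ref{l:Topological_stability} directly to the stratified domain $D\setminus D'$ (stability is inherited on each cube of $D\setminus D'$, hence on their union) to get $N_\star(D\setminus D',g,\ell)=N_\star(D\setminus D',g+p,\ell)$, and finish with the triangle inequality. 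This route needs no tracking of transient components and no genericity reduction; the only place the homotopy enters is inside Lemma~\ref{l:Topological_stability}, where no counting of events is required.
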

\begin{proof}
Define $D' = D \cap (\cup_{v \in \mathcal{U}}B_v)$. Each excursion/level component inside $D$ is either contained in $D\setminus D'$ or else intersects $D'$. The number of components intersecting $D'$ is dominated by the number of stratified critical points in $D'$, and hence
\begin{displaymath}
\lvert N_\star(D,g,\ell)-N_\star(D\setminus D',g,\ell)\rvert\leq N_c(D',g)\leq \sum_{v\in\mathcal{U}} N_c(D \cap B_v,g).
\end{displaymath}
The same equation holds if $g$ is replaced by $g+p$. By Lemma~\ref{l:Topological_stability} we know that
\begin{displaymath}
N_\star(D\setminus D^\prime,g,\ell)=N_\star(D \setminus D',g+p,\ell).
\end{displaymath}
Combining these observations using the triangle inequality proves the result.
\end{proof}

\begin{remark}
One might wonder why we do not simply define $(g,p)$ to be stable if the component counts of $g$ and $g+p$ are the same. The advantage of our definition is its \textit{additivity}: if $D_1$ and $D_2$ are stratified domains and $(g, p)$ is stable on both $D_1$ and $D_2$, then $(g,p)$ is also stable on $D_1 \cup D_2$. This is not true in general for stability in the sense of the component~count.
\end{remark}

\subsection{Application to Gaussian fields}
We now give a quantitative estimate of stability when a $C^2$-smooth stationary Gaussian field is perturbed by a deterministic $C^1$ function, or more generally, a $C^1$-smooth Gaussian field, not necessarily independent of $f$.

\begin{lemma}
\label{l:quant_stab}
Let $f$ be a stationary $C^2$-smooth Gaussian field such that $(f(0),\nabla f(0))$ is non-degenerate. Let $v\in\Z^d$, let $D$ be a stratified domain which is a subset of $B_v$ and let $U$ be a compact set which contains an open neighbourhood of $D$. Then for every $\eps > 0$ there is a $c_\epsilon > 0$, independent of $D$, such that:
\begin{enumerate}
\item For all $\ell \in \R$ and every $p\in C^1(U)$,
\begin{displaymath}
    \P \big( \text{$(f, p)$ is unstable (on $D$ at level $\ell$)} \big) \le  c_\epsilon   \|p\|_{C^1(U)}^{1-\eps} .
\end{displaymath}
\item For all $\ell \in \R$ and every $C^1$-smooth Gaussian field $p$ on $U$,
\[ \P \big( \text{$(f, p)$ is unstable (on $D$ at level $\ell$)} \big) \le  c_\epsilon \inf_{\tau>M_1 + c \sqrt{M_2}} \Big( \tau^{1-\eps} + e^{-\frac{(\tau-M_1 -c\sqrt{M_2} )^2}{2M_2}} \Big) ,\]
where $c>0$ depends only on $U$,
\[ M_1 =  \|\E p\|_{C^1(U)} \quad \text{and} \quad M_2 =  \|\Cov[p]\|_{U, 1,1} = \sup_{x,y \in U} \sup_{|\alpha|, |\gamma| \le 1} \big| \partial_x^\alpha \partial_y^\gamma \Cov[p(x),p(y)] \big| .\]
\end{enumerate}
\end{lemma}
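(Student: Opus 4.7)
The plan is to prove part (1) directly via a stratum-by-stratum covering argument combined with a Gaussian tail bound on $\|f\|_{C^2(U)}$, and then to deduce part (2) from part (1) by splitting on whether $\|p\|_{C^1(U)} \le \eta$ and controlling the tail event by Borell--TIS.

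For part (1), set $\delta = \|p\|_{C^1(U)}$. Since $|p(x)|, |\nabla_F p(x)| \le \delta$ at every $x$, instability forces some $x$ in some stratum $F$ with $|f(x) - \ell| < 2\delta$ and, if $\dim F \ge 1$, also $|\nabla_F f(x)| < 2\delta$. For the $O(1)$ vertex strata, non-degeneracy of $f(0)$ yields a uniformly bounded density for $f(x)$, and hence a contribution of order $\delta$. For a higher-dimensional stratum $F$ of dimension $m$, I would cover $F$ by a $\delta$-net of cardinality $O(\delta^{-m})$; on the event $\{\|f\|_{C^2(U)} \le M\}$ both $f$ and $\nabla_F f$ are $M$-Lipschitz (the latter because $\nabla_F f$ extends to a $C^1$ function on $U$), so instability at some $x \in F$ forces the nearest net point $x_0$ to satisfy $(f(x_0), \nabla_F f(x_0)) \in [\ell - (M+2)\delta, \ell + (M+2)\delta] \times B_{\R^m}(0, (M+2)\delta)$. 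Non-degeneracy of $(f(0), \nabla f(0))$ and stationarity give a uniform bound on the joint density of $(f(x_0), \nabla_F f(x_0))$, so a union bound over the net yields $\le c(1+M)^{1+m}\delta$. Adding the Borell--TIS tail $\P(\|f\|_{C^2(U)} > M) \le e^{-M^2/c'}$ and taking $M \asymp \sqrt{\log(1/\delta)}$ makes both terms at most $c\delta^{1-\eps}$ for any fixed $\eps > 0$ and $\delta$ small (the bound is trivial for $\delta \gtrsim 1$ after adjusting the constant).

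For part (2), I would write $\P(\text{unstable}) \le \P(\text{unstable}, \|p\|_{C^1(U)} \le \eta) + \P(\|p\|_{C^1(U)} > \eta)$ for $\eta$ to be chosen. On $\{\|p\|_{C^1(U)} \le \eta\}$ the instability event is contained in a statement about $f$ alone (with the role of $\delta$ played by $\eta$), whose probability is bounded by $c\eta^{1-\eps}$ exactly as in part (1); notably this step does not require independence of $f$ and $p$. For the tail probability, writing $p = \E p + (p - \E p)$ gives $\|p\|_{C^1(U)} \le M_1 + \|p - \E p\|_{C^1(U)}$, and a Dudley-type bound on the centered Gaussian field $(p - \E p)$ over the compact index set $U \times \{|\alpha| \le 1\}$ (with canonical increments controlled by $\E(\partial^\alpha p(x) - \partial^\alpha p(y))^2 \le c M_2 |x-y|^2$, using that $M_2$ bounds the relevant mixed covariance derivatives) gives $\E\|p - \E p\|_{C^1(U)} \le c\sqrt{M_2}$. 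Borell--TIS then yields $\P(\|p\|_{C^1(U)} > \eta) \le \exp(-(\eta - M_1 - c\sqrt{M_2})^2/(2M_2))$ for $\eta > M_1 + c\sqrt{M_2}$, and infimizing over $\eta$ gives the stated bound.

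The main technical obstacle lies in part (1): one must balance the polynomial-in-$M$ blow-up arising from the $(1+m)$-dimensional density bound against the Gaussian tail $e^{-M^2/c'}$, and verify that a single choice $M \asymp \sqrt{\log(1/\delta)}$ succeeds uniformly across all stratum dimensions $m \le d$ and all base cubes $v \in \Z^d$. The latter uniformity relies crucially on stationarity, which ensures that the non-degeneracy constants, the density bounds for $(f(x_0), \nabla_F f(x_0))$, and the $C^2$-tail parameters are all independent of $v$. A secondary subtlety in part (2) is that we apply the part (1) bound without conditioning on $p$: the event $\{\text{unstable}, \|p\|_{C^1(U)} \le \eta\}$ sits inside an event depending only on $f$, so its probability is controlled even when $p$ and $f$ are dependent.
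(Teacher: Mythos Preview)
Your proposal is correct and follows essentially the same route as the paper. The paper's proof is the union bound \eqref{e:Unstable} over strata, then for the $f$-term it invokes the quantitative Bulinskaya lemma \cite[Lemma~7]{ns16} to get the $c\tau^{1-\eps}$ bound, and for the $p$-term in part~(2) it applies Borell--TIS with $\E\|p-\E p\|_{F,1}\le c\sqrt{M_2}$ coming from a quantified Kolmogorov theorem \cite[A.9]{ns16}. Your argument has exactly this structure; the only difference is that in part~(1) you reprove the Bulinskaya estimate by hand (net + Lipschitz truncation on $\{\|f\|_{C^2}\le M\}$ + balance $M\asymp\sqrt{\log(1/\delta)}$), which is precisely the standard proof of that lemma.

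One small imprecision to fix: your claimed increment bound $\E(\partial^\alpha p(x)-\partial^\alpha p(y))^2 \le cM_2|x-y|^2$ is fine for $|\alpha|=0$ but is not justified for $|\alpha|=1$, since $M_2$ only controls $(1,1)$-derivatives of the covariance, not the $(2,2)$-derivatives a Lipschitz bound on $\nabla p$ would need. The paper sidesteps this by appealing directly to the quantified Kolmogorov theorem for $\E\|p-\E p\|_{F,1}$; you should do the same rather than route through Dudley with that increment estimate.
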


\begin{proof}
By the definition of stability and the union bound, for any $\tau \ge 0$, the probability that $(f,p)$ is unstable is bounded above by
\begin{align}\label{e:Unstable}
 \sum_{F \in \mathcal{F}}\P\Big(\inf_{x \in F}\max\{\lvert f(x)-\ell\rvert,|\nabla_F f(x)|\}<2\tau\Big)+\P\Big(\sup_{x \in F}\max\{\lvert p(x)\rvert,|\nabla_F p(x)|\}>\tau\Big) 
\end{align}
(ignoring the $\nabla_F$ terms if $F \in \mathcal{F}_0$). Since we assume that $D \subseteq B_v$, by stationarity and monotonicity it suffices to control the terms in \eqref{e:Unstable} in the case that $D = B_0$.

The first term in \eqref{e:Unstable} can be bounded by a quantitative version of Bulinskaya's lemma. Specifically \cite[Lemma~7]{ns16} states that, for any $\eps\in(0,1)$, there exists $c_\eps>0$ such that, for all $\tau>0$,
\begin{equation}\label{e:Unstable2}
\P\Big(\inf_{x \in F}\max\{\lvert f(x) -\ell\rvert,|\nabla_F f(x) |\}<2\tau\Big)  \le c_\eps \tau^{1-\eps}.
\end{equation}
We note that the exponent $1-\eps$ is not given in the statement of \cite[Lemma~7]{ns16} but follows immediately from the final inequality in its proof. Then the first statement of the lemma follows by setting $\tau = \|p\|_{C^1(U)}$ in \eqref{e:Unstable}.

For the second statement, we need to bound the second term in \eqref{e:Unstable}. First let us assume that $p$ is centred. For each stratum $F \in \mathcal{F}$ we define
\begin{displaymath}
\|p\|_{F,1}=\sup_{x\in F}\sup_{\lvert\alpha\rvert\leq 1}\left\lvert\partial^\alpha p|_F(x) \right\rvert \quad\text{and} \quad\sigma_F^2=\sup_{x\in F}\sup_{\lvert\alpha\rvert\leq 1}\Var\left[\partial^\alpha p|_F(x)\right].
\end{displaymath}
By the Borell-TIS inequality \cite[Theorem~2.1.1]{at07}, for any $\tau \ge \E\left[\|p\|_{F,1}\right]$
\begin{equation}\label{e:Unstable3}
\P\big( \|p\|_{F,1}>\tau \big)\leq e^{ -(\tau-\E\left[\|p\|_{F,1}\right])^2 / (2\sigma_F^2) }.
\end{equation}
By the quantified Kolmogorov's theorem \cite[Section~A.9]{ns16},
\begin{align*}
\E\left[\|p\|_{F,1}\right] \leq c \sqrt{M_2} 
\end{align*}
where $c > 0$ depends only on $U$. Since we also have
\begin{displaymath}
\sigma_F^2=\sup_{x\in F}\sup_{\lvert\alpha\rvert\leq 1}\left\lvert\partial^\alpha_x\partial^\alpha_y \Cov[p(x),p(y)]\big|_{y=x}\right\rvert\leq M_2,
\end{displaymath}
combining \eqref{e:Unstable2} and \eqref{e:Unstable3} gives the second statement.

In the case that $p$ is not centred, for all $\tau>0$
\begin{multline*}
\P\Big(\sup_{x \in F}\max\{\lvert p(x)\rvert,|\nabla_F p(x)|\}>\tau\Big)\\
\leq \P\Big(\sup_{x \in F}\max\{\lvert p(x)-\E[p(x)]\rvert,|\nabla_F p(x)-\E[\nabla_F p(x)]|\}>\tau - \|\E p\|_{C^1(U)}\Big).
\end{multline*}
Therefore the above arguments are valid on replacing $\tau$ with $\tau - \|\E p\|_{C^1(U)}$.
\end{proof}

\smallskip

\section{Proof of the CLT}
\label{s:clt}
In this section we give the proof of the CLT in Theorem~\ref{t:clt}, and also of the positivity of the limiting variance in Theorem \ref{t:posvar}. 

\subsection{Martingale CLT for lexicographic arrays}
The basis of the proof of Theorem \ref{t:clt} is a classical CLT for martingale arrays which we now describe. This generalises the approach of Penrose in \cite{pen01}.

We say that a collection of random variables $S_{n,i}$ and $\sigma$-algebras $\mathcal{F}_{n,i}$, indexed by $n\in\N$ and $i=1,\dots,k_n$, form a \emph{martingale array} if for all $n$ and $i$
\begin{displaymath}
\E[S_{n,i+1}|\mathcal{F}_{n,i}]=S_{n,i}\quad\text{and}\quad \mathcal{F}_{n,i}\subseteq\mathcal{F}_{n,i+1}.
\end{displaymath}
We say that the array is \textit{mean-zero} if $\E[S_{n,i}]=0$ for each $n,i$, and \textit{square-integrable} if $\sup_{i}\E[S_{n,i}^2]<\infty$ for each $n$ and we define the \textit{differences} of the array as $U_{n,i}:=S_{n,i}-S_{n,i-1}$ for $i=1,\dots,k_n$ (setting $S_{n,0}=0$ by convention).

\begin{theorem}[{\cite[Theorem~3.2]{hh80}}]\label{t:Basic_MCLT}
Let $\{S_{n,i},\mathcal{F}_{n,i}:1\leq i\leq k_n, n\in\N\}$ be a mean-zero square-integrable martingale array with differences $U_{n,i}$. Suppose that
\begin{align}
&\max_{i=1,\dots,k_n}\lvert U_{n,i}\rvert\xrightarrow{p}0\quad\text{as }n\to\infty,\\
&\sup_n\E\left[\max_{i=1,\dots,k_n}U_{n,i}^2\right]<\infty,\\
&\sum_{i=1}^{k_n}U_{n,i}^2\xrightarrow{p}\eta^2\in[0,\infty)\quad\text{as }n\to\infty. \label{e:MCLT3_basic}
\end{align}
Then $S_{n,k_n}\xrightarrow{d}Z,$ where $Z\sim\mathcal{N}(0,\eta^2)$.
\end{theorem}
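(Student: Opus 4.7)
The natural strategy is McLeish's characteristic-function approach, built around the complex multiplicative functional
\[ T_n(t) := \prod_{i=1}^{k_n}(1+itU_{n,i}), \qquad t\in\R, \]
whose two key features are the martingale identity $\E[T_n(t)] = 1$ (obtained by iterated conditioning, using $\E[U_{n,i}\mid\mathcal{F}_{n,i-1}]=0$) and the pointwise approximation $T_n(t)\approx \exp(itS_{n,k_n}+t^2 V_n/2)$, where $V_n := \sum_{i=1}^{k_n} U_{n,i}^2$.

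First I would reduce to the case of bounded differences by truncating $U_{n,i} \mapsto U_{n,i}\one\{|U_{n,i}|\leq\eps\} - \E[\,\cdot\,\mid\mathcal{F}_{n,i-1}]$, a modification which preserves the martingale structure. Hypotheses (a) and (b) together imply that the $L^2$-distance between the original sum $\sum_i U_{n,i}$ and its truncated version tends to $0$ as $\eps\to 0$, uniformly in $n$, so it suffices to run the argument on the truncated array; this also supplies the integrability needed to justify $\E[T_n(t)]=1$.

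Next, using $\log(1+z)=z-z^2/2+O(|z|^3)$ valid for $|z|$ small, together with (a), I would write
\[ T_n(t) = \exp\!\Big( itS_{n,k_n} + \tfrac{t^2}{2} V_n + R_n(t) \Big), \qquad |R_n(t)| \leq C|t|^3 \big(\max_i|U_{n,i}|\big) V_n. \]
Hypotheses (a) and (c) force $R_n(t)\xrightarrow{p}0$ and $V_n\xrightarrow{p}\eta^2$. Since $\eta^2$ is deterministic, $\exp(t^2 V_n/2)\xrightarrow{p}\exp(t^2\eta^2/2)$, and the elementary bound $|T_n(t)|^2 \leq \prod_i(1+t^2U_{n,i}^2)\leq \exp(t^2 V_n)$, combined with the $L^2$-control supplied by the truncation and (b), upgrades these convergences to $L^1$, yielding
\[ 1 = \E[T_n(t)] \longrightarrow \exp(t^2\eta^2/2) \cdot \lim_n \E[\exp(itS_{n,k_n})]. \]
Therefore $\E[\exp(itS_{n,k_n})] \to \exp(-t^2\eta^2/2)$, and L\'evy's continuity theorem gives $\sum_i U_{n,i} \xrightarrow{d} Z \sim \mathcal{N}(0,\eta^2)$.

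The main obstacle is the upgrade from in-probability convergence to convergence of expectations: hypotheses (a)--(c) do not directly furnish uniform integrability of $|T_n(t)|$, because one has no a priori $L^1$ control on $V_n$. The cleanest resolution is the Lindeberg-type truncation above, after which a direct second-moment calculation on the truncated array controls $\E[|T_n(t)|^2]$ uniformly in $n$; carrying out this estimate carefully, while keeping track of the error introduced by the truncation as $\eps\to 0$, is the technical heart of the argument and the reason Hall--Heyde devote substantial effort to it in \cite{hall80}.
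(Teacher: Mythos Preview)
The paper does not prove this theorem: it is quoted verbatim from Hall--Heyde \cite[Theorem~3.2]{hall80} and used as a black box in the subsequent proof of Theorem~\ref{t:Lex_MCLT}. So there is nothing to compare your proposal against within the paper itself.

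That said, your sketch is the standard McLeish characteristic-function argument that underlies the Hall--Heyde proof, and the outline is accurate. One small correction: the identity $\E[T_n(t)]=1$ does not require a preliminary truncation, since square-integrability of the differences already makes each factor $1+itU_{n,i}$ integrable and the product has finitely many terms; truncation enters only to control the remainder $R_n(t)$ and to obtain the uniform integrability needed to pass to the limit in $\E[T_n(t)]$. Your identification of the technical crux---upgrading convergence in probability to convergence of expectations---is exactly right, and is indeed where most of the work in \cite{hall80} lies.
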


In \cite{pen01} Penrose applied Theorem \ref{t:Basic_MCLT} to prove a CLT for the number of clusters in classical percolation. There is a technical difficulty in extending this argument to our setting: the number of percolation clusters in a box depends on a finite number of random variables, whereas the component count for a Gaussian field depends on the white noise throughout $\R^d$ (or equivalently on restrictions of the white noise to unit cubes indexed by $\Z^d$). We therefore require a version of this result for infinite martingale arrays (i.e.\ we need to allow $k_n$ to be infinite). More precisely we wish to apply this result in the case that $i$ is indexed by $\Z^d$ with the standard lexicographic ordering. The role of this particular ordering will become clear later on, the important fact is that it is preserved by shifts of $\Z^d$. 

To this end we make some definitions. We say that a collection of random variables $S_{n,v}$ and $\sigma$-algebras $\mathcal{F}_{n,v}$, indexed by $n\in\N$ and $v\in\Z^d$, form a \emph{lexicographic martingale array} if for all $n\in\N$ and all $v\preceq w$ (where $\preceq$ denotes the lexicographic order) we have
\begin{displaymath}
\E[S_{n,w}|\mathcal{F}_{n,v}]=S_{n,v}\quad\text{and}\quad\mathcal{F}_{n,v}\subseteq\mathcal{F}_{n,w}.
\end{displaymath}
The array is \textit{mean-zero} if $\E[S_{n,v}] = 0$, and \textit{square-integrable} if $\sup_{v}\E[S_{n,v}^2]<\infty$ for each $n$.

We say that a sequence of points in $\Z^d$ \textit{converges to $\pm\infty^*$ (in the lexicographic ordering)} if each coordinate of the points tends to $\pm\infty$. By the backwards martingale convergence theorem, for each $n$, $S_{n,v}$ converges almost surely to some integrable limit as $v$ tends to $-\infty^*$. Therefore we assume without loss of generality that this limit is zero for each $n$. 

We say that the array is \emph{regular at infinity} if the following holds: for each $n\in\N$, ${i\in\{1,\dots,d-1\}}$ and $(v_1,\dots,v_i)\in\Z^i$
\begin{displaymath}
\lim_{v_{i+1},\dots,v_d\to\infty}S_{n,v}=\lim_{v_{i+1},\dots,v_d\to-\infty}S_{n,v^\prime}
\end{displaymath}
where $v=(v_1,\dots,v_d)$ and  $v^\prime=(v_1,\dots,v_{i-1},v_i+1,v_{i+1}\dots,v_d)$. (Note that both of the above limits exist, almost surely and in $L^2$, by the $L^p$ martingale convergence theorems for forward/reverse martingales.) For $v\in\Z^d$, let $v^-$ denote the previous element in the lexicographic ordering of $\Z^d$, and define the \textit{differences} of the array as ${U_{n,v}:=S_{n,v}-S_{n,v^-}}$. A simple argument iterating over the coordinates of $v$ shows that being regular at infinity allows us to write\begin{displaymath}
S_{n,v}-S_{n,w}=\sum_{w\prec k\preceq v}U_{n,k}
\end{displaymath}
for any $w\preceq v$, where the limit implied by this summation holds almost surely and in $L^2$. Note that a priori we do not know that this sum converges absolutely, so the order of summation should be taken in a way that is consistent with the lexicographic order. However we only work in the setting that $\E[\sum_{v\in\Z^d}\lvert U_{n,v}\rvert^p]<\infty$ for $p\in\{1,2\}$, and so we may ignore this subtlety. Then using the $L^p$ martingale convergence theorems for forward/reverse martingales, we see that (for an array which is square-integrable and regular at infinity)
\begin{equation}\label{e:OrthogonalMart}
S_{n,\infty^*}:=\lim_{v\to\infty^*}S_{n,v}=\sum_{k\in\Z^d}U_{n,k}
\end{equation}
where the limit holds almost surely and in $L^2$.

\begin{theorem}\label{t:Lex_MCLT}
Let $\{S_{n,v},\mathcal{F}_{n,v}:v\in\Z^d, n\in\N\}$ be a mean-zero square-integrable lexicographic martingale array which is regular at infinity. Suppose that
\begin{align}
&\sup_{v\in\Z^d}\lvert U_{n,v}\rvert\xrightarrow{p}0\quad\text{as }n\to\infty\label{e:MCLT1}\\
&\sup_n\E\Big[\sup_{v\in\Z^d}U_{n,v}^2\Big]<\infty\label{e:MCLT2}\\
&\sum_{v\in\Z^d}U_{n,v}^2\xrightarrow{L^1}\eta^2\in[0,\infty)\quad\text{as }n\to\infty\label{e:MCLT3}\\
&\E\Big[\sum_{v\in\Z^d}\lvert U_{n,v}\rvert\Big]<\infty\quad\text{for all }n\in\N.\label{e:MCLT4}
\end{align}
Then $\Var[S_{n,\infty^*}]\to\eta^2$ and $S_{n,\infty^*}\xrightarrow{d}Z$ where $Z\sim\mathcal{N}(0,\eta^2)$.
\end{theorem}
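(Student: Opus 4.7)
The plan is to reduce Theorem \ref{t:Lex_MCLT} to the finite-array martingale CLT (Theorem \ref{t:Basic_MCLT}) by a truncation argument, the key tool being the $L^2$-orthogonality of the differences $U_{n,v}$.

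First I would verify this orthogonality: if $v\prec w$ then $v\preceq w^-$, so $U_{n,v}$ is $\mathcal{F}_{n,w^-}$-measurable, while the martingale property yields both $\E[U_{n,w}\mid \mathcal{F}_{n,w^-}]=0$ and $\E[U_{n,v}]=0$. Combined with \eqref{e:MCLT3}, this immediately gives the variance statement,
\[
\mathrm{Var}\Big[\sum_{v\in\Z^d}U_{n,v}\Big]\;=\;\sum_{v\in\Z^d}\E[U_{n,v}^2]\;=\;\E\Big[\sum_{v\in\Z^d}U_{n,v}^2\Big]\;\longrightarrow\; \eta^2.
\]

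Next I would truncate: let $V_M:=\{-M,\ldots,M\}^d\subset\Z^d$ and $Z_{n,M}:=\sum_{v\in V_M}U_{n,v}$. Using the monotonicity of $M\mapsto\sum_{v\in V_M}U_{n,v}^2$ together with the $L^1$ convergence in \eqref{e:MCLT3}, a standard diagonal extraction produces a sequence $M_n\to\infty$ with both
\[
\E\Big[\sum_{v\notin V_{M_n}}U_{n,v}^2\Big]\to 0 \qquad\text{and}\qquad \sum_{v\in V_{M_n}}U_{n,v}^2\xrightarrow{L^1}\eta^2.
\]
Orthogonality gives $\E\big[(Z_{n,M_n}-\sum_{v}U_{n,v})^2\big]=\sum_{v\notin V_{M_n}}\E[U_{n,v}^2]\to 0$, so by Slutsky it suffices to prove $Z_{n,M_n}\xrightarrow{d}\mathcal{N}(0,\eta^2)$. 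For this, for each $n$ I would enumerate $V_{M_n}$ in the induced lexicographic order as $v_{n,1}\prec\cdots\prec v_{n,k_n}$ with $k_n=(2M_n+1)^d$, and set $\tilde S_{n,j}:=\sum_{i\leq j}U_{n,v_{n,i}}$, $\tilde{\mathcal{F}}_{n,j}:=\mathcal{F}_{n,v_{n,j}}$. This is a finite mean-zero square-integrable martingale array whose differences $\tilde U_{n,j}=U_{n,v_{n,j}}$ satisfy the three hypotheses of Theorem \ref{t:Basic_MCLT}: the first two are immediate from \eqref{e:MCLT1} and \eqref{e:MCLT2} via $\max_j|\tilde U_{n,j}|\leq \sup_{v\in\Z^d}|U_{n,v}|$, and the third is precisely the right-hand half of the preceding display. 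Theorem \ref{t:Basic_MCLT} then delivers $Z_{n,M_n}\xrightarrow{d}\mathcal{N}(0,\eta^2)$, completing the argument.

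The only real subtlety is the choice of truncation level $M_n$: it must grow slowly enough that the $L^1$ convergence in \eqref{e:MCLT3} transfers to the truncated partial sums while the tail simultaneously becomes $L^1$-negligible. This is a routine diagonal argument and not a serious obstacle. Assumption \eqref{e:MCLT4} and the regularity at infinity play only an auxiliary role, ensuring \eqref{e:OrthogonalMart} so that $\sum_v U_{n,v}$ is an unambiguous, $L^2$-convergent sum to begin with.
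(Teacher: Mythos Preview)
Your argument is correct and in fact a bit cleaner than the paper's. The key difference is in how the finite martingale array is built. The paper sets $T_{n,j}=S_{n,v_j}$, so that the differences $V_{n,j}=S_{n,v_j}-S_{n,v_{j-1}}=\sum_{v_{j-1}\prec v\preceq v_j}U_{n,v}$ bundle together all the $U_{n,v}$'s lying lexicographically between consecutive box points; controlling these bundles is precisely where the paper invokes the $L^1$ hypothesis \eqref{e:MCLT4}. You instead observe (via the tower property, since $v\prec w$ forces $\mathcal{F}_{n,v}\subset\mathcal{F}_{n,w^-}$) that the bare increments $U_{n,v_{n,j}}$ already form a martingale difference sequence with respect to the coarsened filtration $\mathcal{F}_{n,v_{n,j}}$, so no bundling occurs and the three conditions of Theorem~\ref{t:Basic_MCLT} transfer directly from \eqref{e:MCLT1}--\eqref{e:MCLT3}. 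The upshot is that your route does not use \eqref{e:MCLT4} at all in the CLT step: orthogonality alone gives unconditional $L^2$-convergence of $\sum_v U_{n,v}$ and the tail estimate $\E[(Z_{n,M_n}-\sum_v U_{n,v})^2]=\sum_{v\notin V_{M_n}}\E[U_{n,v}^2]\to 0$. One small clarification: the ``diagonal extraction'' you mention is really just choosing, for each $n$, an $M_n$ large enough that this tail is at most $1/n$ (possible since $\sum_v\E[U_{n,v}^2]<\infty$ by square integrability and orthogonality); the second displayed convergence then follows from the first and \eqref{e:MCLT3}.
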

\begin{remark}
Compared to Theorem \ref{t:Basic_MCLT}, as well as holding for lexicographic arrays Theorem~\ref{t:Lex_MCLT} also strengthens the mode of convergence by adding the summability condition \eqref{e:MCLT4} and assuming $L^1$ convergence in \eqref{e:MCLT3} rather than convergence in probability as in \eqref{e:MCLT3_basic}. 
\end{remark}
\begin{proof}
Using orthogonality of martingale increments and \eqref{e:OrthogonalMart}
\begin{displaymath}
\Var\Big[S_{n,\infty^*}\Big]=\E \Big[\sum_{v\in\Z^d}U_{n,v}^2 \Big],
\end{displaymath}
and hence \eqref{e:MCLT3} implies that $\Var[S_{n,\infty^*}]\to\eta^2$, proving the first part of our result. Moreover combining this with \eqref{e:MCLT4} we can find a sequence of points $a_n\in\N$ tending to infinity such that as $n\to\infty$
\begin{equation}\label{e:LMCLT1}
\sup_{p=1,2}\E\bigg[\sum_{v\notin \Lambda_{a_n-1}}\lvert U_{n,v}\rvert^p\bigg]\to 0.
\end{equation}
Recalling that martingale increments are orthogonal, this implies that $\sum_{v\notin \Lambda_{a_n-1}}U_{n,v}$ converges to zero in $L^2$ (and hence in probability).

We now define a finite martingale array by restricting $S_{n,v}$ to the box $\Lambda_{a_n}$. Specifically for each $n$ we let $v_1,v_2,\dots,v_{k_n}$ denote the elements of $\Lambda_{a_n}\cap\Z^d$ ordered lexicographically and we define
\begin{displaymath}
T_{n,j}:=S_{n,v_j},\quad\mathcal{G}_{n,j}:=\mathcal{F}_{n,v_j},\quad V_{n,j}:=T_{n,j}-T_{n,j-1}=\sum_{v_{j-1}\prec v\preceq v_j}U_{n,v}\quad\text{for }j=1,\dots,k_n
\end{displaymath}
with the convention that $v_0=-\infty^*$. We wish to apply Theorem~\ref{t:Basic_MCLT} to this construction. First we consider the differences $V_{n,j}$ in terms of $U_{n,v}$. If the point preceding $v_j$ lexicographically is contained in $\Lambda_{a_n}$ (i.e.\ if $v_j^-=v_{j-1}$) then $V_{n,j}=U_{n,v}$ for some $v\in\Lambda_{a_n}$. Otherwise we have $v\notin\Lambda_{a_n}$ for all $v$ such that $v_{j-1}\prec v\prec v_j$. Considering these two cases, we see that
\begin{equation*}\label{e:LMCLT2}
\max_{j=1,\dots,k_n}\lvert V_{n,j}\rvert\leq\max_{v\in\Lambda_{a_n}}\lvert U_{n,v}\rvert+\sum_{j:v_j^-\notin\Lambda_{a_n}}\sum_{v_{j-1}\prec v\preceq v_j}\lvert U_{n,v}\rvert\leq\sup_{v\in\Z^d}\lvert U_{n,v}\rvert+\sum_{v\notin\Lambda_{a_n-1}}\lvert U_{n,v}\rvert.
\end{equation*}
By \eqref{e:MCLT1} and \eqref{e:LMCLT1}, this converges to zero in probability, verifying the first condition of Theorem~\ref{t:Basic_MCLT}.

By the same reasoning
\begin{equation*}
\max_{j=1,\dots,k_n} V_{n,j}^2\leq\max_{v\in\Lambda_{a_n}} U_{n,v}^2+\sum_{j:v_j^-\notin\Lambda_{a_n}}\bigg(\sum_{v_{j-1}\prec v\preceq v_j} U_{n,v}\bigg)^2.
\end{equation*}
Taking expectations, using orthogonality of martingale increments, \eqref{e:MCLT2} and \eqref{e:LMCLT1}, we see that the second condition of Theorem~\ref{t:Basic_MCLT} also holds.

We also have
\begin{equation*}\label{e:LMCLT3}
\E\bigg\lvert\sum_{j=1}^{k_n}V_{n,j}^2-\sum_{v: v,v^-\in\Lambda_{a_n}}U_{n,v}^2\bigg\rvert\leq\E\bigg[\sum_{j:v_j^-\notin\Lambda_{a_n}}\bigg(\sum_{v_{j-1}\prec v\preceq v_j} U_{n,v}\bigg)^2\bigg]\leq\sum_{v\notin\Lambda_{a_n-1}}\E[U_{n,v}^2],
\end{equation*}
which converges to zero by \eqref{e:LMCLT1}. Hence by \eqref{e:MCLT3} (and applying \eqref{e:LMCLT1} once more) we verify the third and final condition of Theorem~\ref{t:Basic_MCLT}. We therefore deduce that
\begin{displaymath}
\sum_{j=1}^{k_n}V_{n,j}\xrightarrow{d}Z
\end{displaymath}
where $Z\sim\mathcal{N}(0,\eta^2)$. Finally we observe that by the two types of expression for $V_{n,j}$ in terms of $U_{n,v}$ described above
\begin{equation*}\label{e:LMCLT4}
\bigg\lvert\sum_{j=1}^{k_n}V_{n,j}-S_{n,\infty^*}\bigg\rvert=\bigg\lvert\sum_{j=1}^{k_n}V_{n,j}-\sum_{v\in\Z^d}U_{n,v}\bigg\rvert\leq\sum_{v\notin\Lambda_{a_n}}\lvert U_{n,v}\rvert.
\end{equation*}
The right hand side converges to zero in $L^1$ by \eqref{e:LMCLT1} and so we see that $S_{n,\infty^*}\xrightarrow{d}Z$ as required.
\end{proof}

\subsection{Application to the component count}
We can now outline our proof of Theorem~\ref{t:clt}. In fact we will prove a version of this result which holds for the number of components contained in slightly more general domains (i.e.\ not just the cube $\Lambda_R=[-R,R]^d$). 

 We say that $D\subset \R^d$ is a \textit{box-domain} if it is a box $D =  [a_1,b_1]\times\cdots\times[a_d,b_d] $ considered as a stratified domain in the sense of Section~\ref{s:stability}. We write $|D|$ for the volume of a box-domain, and $\mathrm{asp}(D) = \min_i |b_i-a_i| / \max_i |b_i - a_i|$ for its \textit{aspect ratio}, i.e.\ the ratio of its shortest to longest side lengths. We say that a sequence of box-domains $D_n$ \textit{converges to} $\R^d$ (written $D_n \to \R^d$) if $\cup_n\cap_{m>n}D_m=\R^d$ and $\inf_n \mathrm{asp}(D_n) > 0$.

Henceforth we suppose that $f$ satisfies Assumption~\ref{a:clt}, and fix $\star \in \{\mathrm{ES},\mathrm{LS}\}$ and $\ell \in \R$. Recall from Section \ref{s:stability} that $N_\star(D)=N_\star(D,f,\ell)$ denotes the component count for $f$ at level~$\ell$ inside $D$. Our desired CLT now takes the following form:

\begin{theorem}[CLT for the component count]
\label{t:clt_gen}
There exists a $\sigma^2 \ge 0$ such that, for every sequence of box-domains $D_n \to \R^d$, as $n\to\infty$
\[ \frac{\Var[N_\star(D_n)]}{\lvert D_n\rvert}\to\sigma^2 \quad \text{and} \quad 
\frac{N_\star(D_n) -  \E[N_\star(D_n)]}{\lvert D_n\rvert^{1/2}}\xrightarrow{d} \sigma Z , \]
where $Z$ is a standard normal random variable. A representation for $\sigma^2$ is given in \eqref{e:sigma}.
\end{theorem}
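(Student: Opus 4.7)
The plan is to construct a martingale array by sequentially resampling the white noise $W$ on unit cubes indexed by $\Z^d$, then apply the lexicographic martingale CLT (Theorem \ref{t:Lex_MCLT}). The key idea is that the moving-average representation $f = q \star W$ lets me view $N_\star(D_n)$ as a function of the family $(W|_{B_v})_{v\in\Z^d}$ of independent Gaussian variables, on which a resampling-based Doob construction can be carried out.

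\emph{Step 1: Martingale and its differences.} For $v \in \Z^d$, write $W_v := W|_{B_v}$, $\mathcal{F}_v := \sigma(W_w : w \preceq v)$, and set
\[
S_{n,v} := |D_n|^{-1/2} \bigl(\E[N_\star(D_n) \mid \mathcal{F}_v] - \E[N_\star(D_n)]\bigr),
\]
which defines a mean-zero, square-integrable lexicographic martingale array (square-integrability by Corollary \ref{c:tmb}). I will introduce independent copies $W'_v$, the resampled white noise $W^v$, the resampled field $f^v := q \star W^v$, and its component count $N^v_\star(D_n)$, giving the standard resampling identity
\[
U_{n,v} := S_{n,v} - S_{n,v^-} = |D_n|^{-1/2} \, \E[N_\star(D_n) - N^v_\star(D_n) \mid \mathcal{F}_v].
\]

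\emph{Step 2: Stability estimates.} Since $q$ and its derivatives up to order two decay like $|x|^{-\beta}$, the perturbation $p_v := f^v - f = q \star (W'_v - W_v)$ is a centred Gaussian field whose covariance satisfies $\|\Cov(p_v)\|_{B_w, 1, 1} \lesssim |w-v|^{-2\beta}$ for $|w-v|$ large. Lemma \ref{l:quant_stab}(2) then yields the pointwise bound
\[
\P\bigl((f, p_v) \text{ unstable on } D_n \cap B_w\bigr) \lesssim |w-v|^{-\beta(1-\eps)},
\]
while Lemma \ref{l:stab3} controls $|N_\star(D_n) - N^v_\star(D_n)|$ by a sum of stratified critical-point counts over unstable unit cubes. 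Combining these with the third moment bound of Theorem \ref{t:tmb} via Hölder's inequality produces summable upper bounds on $\E[(N_\star(D_n) - N^v_\star(D_n))^2]$ with decay in $\dist(v, D_n)$; the exponent $\beta > 9d$ is calibrated so that the resulting double sums over unit cubes converge.

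\emph{Step 3: Verification of the CLT conditions and construction of $\sigma^2$.} Step 2 readily verifies regularity at infinity and the summability conditions \eqref{e:MCLT1}, \eqref{e:MCLT2}, and \eqref{e:MCLT4} of Theorem \ref{t:Lex_MCLT}. For the pivotal condition \eqref{e:MCLT3}, I will first show that for each fixed $v$ the quantities $|D_n|^{1/2} U_{n,v}$ are $L^2$-Cauchy as $D_n \to \R^d$, with an infinite-volume limit $U_v^\infty$. By translation invariance of $W$, $\E[(U_v^\infty)^2]$ is independent of $v$, and I will \emph{define}
\[
\sigma^2 := \E[(U_0^\infty)^2].
\]
For $v$ deep inside $D_n$ the stability estimates show $|D_n|^{1/2} U_{n,v}$ is $L^2$-close to a translate of $U_0^\infty$, while contributions from near-boundary or exterior $v$ are of order $|D_n|^{(d-1)/d}$ after summing and therefore negligible after division by $|D_n|$. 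Combined with the orthogonality identity $\E[\sum_v U_{n,v}^2] = |D_n|^{-1} \Var[N_\star(D_n)]$, this yields simultaneously $\Var[N_\star(D_n)]/|D_n| \to \sigma^2$ and $\sum_v U_{n,v}^2 \to \sigma^2$ in $L^1$.

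\emph{Main obstacle.} The hardest step will be the $L^1$-convergence $\sum_v U_{n,v}^2 \to \sigma^2$ in condition \eqref{e:MCLT3}. Establishing it requires a variance bound for $\sum_v U_{n,v}^2$, which in turn needs control over mixed instability events at distinct resampling sites $v_1 \neq v_2$. Because the polynomial decay of $q$ gives only summable (rather than exponentially summable) decorrelation rates, the third moment bound on critical points in Theorem \ref{t:tmb} is indispensable: two successive applications of Hölder (one for each instability probability) each consume a moment factor, and only with $\beta > 9d$ do the resulting multi-site sums converge absolutely.
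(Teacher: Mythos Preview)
Your Steps 1 and 2 match the paper's argument closely: the same lexicographic martingale built from the white-noise filtration, the same resampling identity for $U_{n,v}$, and the same combination of Lemma~\ref{l:stab3}, Lemma~\ref{l:quant_stab}, and the third-moment bound (Theorem~\ref{t:tmb}) to control moments of $\Delta_v(D_n)$. The divergence is entirely in Step~3, where you propose to reach \eqref{e:MCLT3} by proving a direct variance bound on $\sum_v U_{n,v}^2$ via ``mixed instability events''.

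The paper takes a different and cleaner route here. It first shows an almost-sure \emph{stabilisation} $\Delta_0(D_n)\to\Delta_0(\R^d)$ (Lemma~\ref{l:Stabilisation}), then proves that $X_v:=\E[\Delta_v(\R^d)\mid\mathcal F_v]$ is stationary in $v$ (this is where the lexicographic order is essential, via the shift-invariance of $\{u:u\preceq v\}$), and finally invokes the $L^1$ \emph{ergodic theorem} for the i.i.d.\ array $(W_v)$ to get $|D_n|^{-1}\sum_{v\in V_n}X_v^2\to\E[X_0^2]=:\sigma^2$. The remaining work is a comparison of $X_v(D_n)^2$ with $X_v^2$ on an interior region, plus boundary and exterior estimates; no covariance decay between $X_{v_1}^2$ and $X_{v_2}^2$ is ever needed. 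The paper also first restricts to \emph{well-spaced} sequences $D_n$ (so that Borel--Cantelli applies in the stabilisation lemma) and removes this restriction only at the end by a subsequence/compactness argument; your proposal does not address this.

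Your variance-bound route is not obviously wrong, but it is under-specified at the key point. Controlling ``mixed instability'' is not by itself enough to bound $\Cov(X_{v_1}^2,X_{v_2}^2)$: the conditional expectation $X_v=\E[\Delta_v\mid\mathcal F_v]$ depends on \emph{all} $W_u$ with $u\preceq v$, not just on nearby cubes, so two distant $X_v$'s are not decorrelated merely by stability of $\Delta_v$. You would need an extra localisation step---approximating $X_v$ in $L^2$ by a function of $(W_u:|u-v|\le R)$ with a quantitative error in $R$---before the i.i.d.\ structure of $W$ yields the covariance decay. That can likely be done with the same stability inputs, but it is an additional argument you have not sketched, and it is precisely what the ergodic theorem lets the paper avoid.
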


\begin{proof}[Proof of Theorem~\ref{t:clt}]
For a sequence $R_n \to \infty$, take $D_n=[-R_n,R_n]^d$ in Theorem~\ref{t:clt_gen}.
\end{proof}

Theorem \ref{t:clt_gen} is proven in the following way. Recall that $f$ may be represented as $f=q\ast W$ where $W$ is the white noise on $\R^d$. For $v\in\Z^d$, recall that $B_v=v+[0,1]^d$, and write $W_v$ for the restriction of $W$ to the cube $B_v$.  Fix a sequence of box-domains $D_n \to \R^d$, and define for $v\in\Z^d$
\begin{align}\label{e:LexMarRep}
S_{n,v}=\frac{\E\left[N_\star(D_n)\;\middle|\;\mathcal{F}_{n,v}\right]-\E[N_\star(D_n)]}{\left\lvert D_n\right\rvert^{1/2}}\quad\text{where}\quad\mathcal{F}_{n,v}:=\mathcal{F}_v:=\sigma\left(W_u\;|\;u\preceq v\right),
\end{align}
recalling that $\preceq$ denotes the lexicographic order.
\begin{lemma}\label{l:RegularInf}
    Equation~\eqref{e:LexMarRep} defines a mean-zero lexicographic martingale array which is square-integrable, regular at infinity and satisfies
    \begin{equation}\label{e:MartingaleNormalisation}
        \lim_{v\to-\infty^*}S_{n,v}=0\quad\text{and}\quad
        S_{n,\infty^*}=\lim_{v\to\infty^*}S_{n,v}=\frac{N_\star(D_n)-\E[N_\star(D_n)]}{\lvert D_n\rvert^{1/2}}.
    \end{equation}
\end{lemma}
\begin{proof}
    It is clear that \eqref{e:LexMarRep} defines a lexicographic martingale array (by the tower property of conditional expectation). Square-integrability follows since $N_\star(D_n)$ has a finite second moment for each $n$. By L\'evy's downward and upward convergence theorems respectively we have
    \begin{equation}\label{e:Filtration_normalisation}
    \begin{aligned}
    \text{as }v\to-\infty^*\qquad \E[N_\star(D_n)\;|\;\mathcal{F}_{v}]\to\E\left[N_\star(D_n)\;\middle|\;\mathcal\cap_{v\in\Z^d}{F}_{v}\right]=\E[N_\star(D_n)]\\
    \text{as }v\to\infty^*\qquad \E[N_\star(D_n)\;|\;\mathcal{F}_{v}]\to\E\left[N_\star(D_n)\;\middle|\;\sigma\left(\cup_{v\in\Z^d}\mathcal{F}_{v}\right)\right]=N_\star(D_n).
    \end{aligned}
    \end{equation}
    This verifies \eqref{e:MartingaleNormalisation}.

    Finally we show that \eqref{e:LexMarRep} is regular at infinity. We fix $v_1,\dots,v_i\in\Z$ and two sequences $j^{(m)}$ and $k^{(m)}$ in $\Z^d$ such that
    \begin{align*}
        j^{(m)}&=(v_1,\dots,v_i,a^{(m)}_1,\dots,a^{(m)}_{d-i})\\
        k^{(m)}&=(v_1,\dots,v_i+1,b^{(m)}_1,\dots,b^{(m)}_{d-i})
    \end{align*}
    where $a^{(m)}_1,\dots,a^{(m)}_{d-i}\to\infty$ and $b^{(m)}_1,\dots,b^{(m)}_{d-i}\to-\infty$ with $m$. By L\'evy's upward and downward theorems, as $m\to\infty$ we have
    \begin{align*}
        \E[N_\star(D_n)\;|\;\mathcal{F}_{j^{(m)}}]\to\E[N_\star(D_n)\;|\;\mathcal{F}^-]\quad\text{and}\quad
        \E[N_\star(D_n)\;|\;\mathcal{F}_{k^{(m)}}]\to\E[N_\star(D_n)\;|\;\mathcal{F}^+]
    \end{align*}
    where
    \begin{displaymath}
        \mathcal{F}^-=\sigma\Big(\bigcup_{m\in\N}\mathcal{F}_{j^{(m)}}\Big)\quad\text{and}\quad\mathcal{F}^+=\bigcap_{m\in\N}\mathcal{F}_{k^{(m)}}.
    \end{displaymath}
    Regularity at infinity then follows if we can show that the completions of $\mathcal{F}^+$ and $\mathcal{F}^-$ coincide. Observe that $\mathcal{F}^+$ is generated by events in $\mathcal{F}^-$ together with those measurable with respect to a `tail' of independent variables. Therefore we can prove equality by generalising the proof of Kolmogorov's zero-one law. Specifically for $A\in\mathcal{F}^+$, defining
    \begin{align*}
        \mathcal{G}_m:=\sigma(W_v\;|\;k^{(m)}\preceq v\preceq k^{(1)})\quad\text{and}\quad\mathcal{G}_\infty=\sigma\Big(\bigcup_m\mathcal{G}_m\Big)
    \end{align*}
    we may apply L\'evy's upward theorem once more to see that
    \begin{equation}\label{e:LevyUpward}
        \E[\ind_A\;|\;\sigma(\mathcal{F}^-,\mathcal{G}_m)]\to \E[\ind_A\;|\;\sigma(\mathcal{F}^-,\mathcal{G}_\infty)]=\ind_A
    \end{equation}
    where the final equality follows since $\sigma(\mathcal{F}^-,\mathcal{G}_\infty)\supseteq\mathcal{F}_{k^{(1)}}\supseteq\mathcal{F}^+$. However since $\mathcal{F}^+$ is independent of $\mathcal{G}_m$ we have
    \begin{displaymath}
        \E[\ind_A\;|\;\sigma(\mathcal{F}^-,\mathcal{G}_m)]=\E[\ind_A\;|\;\mathcal{F}^-].
    \end{displaymath}
    Combined with \eqref{e:LevyUpward} we conclude that $A$ is measurable with respect to the completion of $\mathcal{F}^-$, as required.
\end{proof}

As a consequence of this lemma, Theorem~\ref{t:clt_gen} will follow from our lexicographic martingale CLT (Theorem~\ref{t:Lex_MCLT}) provided we verify conditions \eqref{e:MCLT1}--\eqref{e:MCLT4}. This verification will require several preparatory lemmas.

The first step is to find an alternative representation of the martingale differences $U_{n,v}$. Let $W^\prime$ be an independent copy of $W$ and define a new white noise
\begin{displaymath}
\widetilde{W}_v(A)=W(A\backslash B_v)+W^\prime(A\cap B_v).
\end{displaymath}
Roughly speaking $\widetilde{W}_v$ is simply $W$ after resampling independently on $B_v$. We then define $\widetilde{f}_v=q\ast\widetilde{W}_v$ and
\begin{equation}\label{e:DeltaDef}
\Delta_v(D)=N_\star(D,f,\ell)-N_\star(D,\widetilde{f}_v,\ell)
\end{equation}
i.e.\ $\Delta_v(D)$ is the change in the component count inside a domain $D$ upon resampling the white noise on the cube $B_v$. The importance of resampling comes from the following representation
\begin{equation}\label{e:ResamplingRep}
U_{n,v}=\lvert D_n\rvert^{-1/2}\E[\Delta_v(D_n)\;|\;\mathcal{F}_v]\qquad\text{a.s.,}
\end{equation}
which follows easily from the independence of the white noise on disjoint regions. Finally we define the perturbation function $p_v:\R^d\to\R$ by
\begin{displaymath}
p_v(x):=\widetilde{f}_v(x)-f(x)=\int_{B_v}q(x-u)\;d(W^\prime-W)(u).
\end{displaymath}
Note that $\widetilde{f}_v$ is equal to $f$ in law, and $p_v$ is a centred Gaussian field.

We can control $\Delta_v(D)$ by applying the stability estimates from Section \ref{s:stability}. We let $V = \{w \in \Z^d : D \cap B_w \neq \emptyset\}$ denote the indices of cubes which intersect $D$. For $w \in V$, recall the notion of stability of a pair of functions $(g, p)$ (at level $\ell$) on $D \cap B_w$, considered as a stratified domain. For $v\in\Z^d$, we define the (random) \textit{unstable set}
\begin{displaymath}
\mathcal{U}_v = \U_v(D) :=\{w\in V \;|\;(f,p_v) \text{ is unstable on $D \cap B_w$}\}.
\end{displaymath}
Recall also the exponent $\beta > 9d$ from Assumption~\ref{a:clt}, which appears in subsequent bounds.

\begin{lemma}\label{l:Prob_unstable_decay}
For each $\delta>0$ there exists $c>0$, independent of $D$, such that, for all $v\in\Z^d$,
\begin{displaymath}
\P\left(v\in \U_0\right)\leq c( 1+ \lvert v\rvert)^{-\beta+\delta} .
\end{displaymath}
\end{lemma}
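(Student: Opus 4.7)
My plan is to apply Lemma~\ref{l:quant_stab}(2) directly to $p_0$, viewed as a centred $C^2$-smooth Gaussian field (in fact smoother, by Assumption~\ref{a:clt}(1)), on the stratified domain $D \cap B_v \subseteq B_v$. By the definition of $\mathcal{U}_0$, this gives exactly an upper bound on $\P(v \in \mathcal{U}_0)$. For $|v|$ bounded by an absolute constant the claim is trivial via the bound $\P \le 1$, so I focus on large $|v|$, where the covariance of $p_0$ across $B_v$ and $B_0$ becomes small.

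The next step is to compute the covariance of $p_0$ and bound the parameter $M_2$. Since $W$ and $W'$ are independent white noises,
\[ \Cov(p_0(x), p_0(y)) = 2 \int_{B_0} q(x-u)\, q(y-u) \, du, \]
and in particular $M_1 = \|\E p_0\|_{C^1} = 0$. Differentiating under the integral, using the fact that Assumption~\ref{a:clt}(2) controls $\partial^\alpha q$ for $|\alpha| \le 2$, and noting that $|x-u|, |y-u| \asymp 1+|v|$ uniformly for $x, y$ in a fixed unit-sized neighbourhood of $B_v$ and $u \in B_0$, I obtain $M_2 \le c(1+|v|)^{-2\beta}$ and hence $\sqrt{M_2} \le c(1+|v|)^{-\beta}$.

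The final step is to optimise the infimum in Lemma~\ref{l:quant_stab}(2). A natural choice is
\[ \eta = \sqrt{M_2}\,\bigl(c + \sqrt{2\beta \log(1+|v|)}\bigr), \]
which makes the Gaussian tail term equal to $(1+|v|)^{-\beta} \le (1+|v|)^{-\beta+\delta}$, while the Bulinskaya-type polynomial term is bounded by
\[ \eta^{1-\eps} \le c\,(1+|v|)^{-\beta(1-\eps)}\bigl(\log(1+|v|)\bigr)^{(1-\eps)/2}. \]
Taking $\eps < \delta/(2\beta)$ makes the polynomial exponent at most $-\beta + \delta/2$, and the residual logarithmic factor is absorbed into a further $(1+|v|)^{\delta/2}$, yielding the claimed bound $c(1+|v|)^{-\beta+\delta}$. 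I expect the main technical point to be exactly this balancing: the choice of $\eta$ must grow in $|v|$ only as $\sqrt{\log(1+|v|)}$ so as not to degrade the polynomial term $\eta^{1-\eps}$, yet fast enough that the Gaussian tail contributes at worst $(1+|v|)^{-\beta}$.
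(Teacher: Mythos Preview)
Your proposal is correct and follows essentially the same approach as the paper: apply Lemma~\ref{l:quant_stab}(2) with $M_1=0$, bound $M_2\le c(1+|v|)^{-2\beta}$ via the white-noise representation of $\Cov(p_0(x),p_0(y))$ and Assumption~\ref{a:clt}(2), then optimise over $\eta$. The only difference is in that final optimisation: the paper takes $\eta\asymp (1+|v|)^{-\beta+\eta_0}$ for a small fixed $\eta_0>0$ (so the Gaussian tail is super-polynomially small and the polynomial term carries exponent $-(\beta-\eta_0)(1-\eps)$), whereas you take $\eta\asymp\sqrt{M_2}\sqrt{\log(1+|v|)}$ to make the Gaussian tail exactly $(1+|v|)^{-\beta}$ and then absorb the resulting $\log$ factor; both choices yield the same conclusion.
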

\begin{proof}
By Lemma \ref{l:quant_stab}, for every $\eps > 0$ there exist $c_\eps,c > 0$, independent of $D$, such that 
\[ \P\left(v\in \U_0\right)  \le  c_\eps \inf_{\tau > c \sqrt{M_v} } \Big( \tau^{1-\eps} +  e^{-(\tau- c \sqrt{M_v})^2 /  (2 M_v)  } \Big),\]
where
\[ M_v= \sup_{x,y \in v + \Lambda_2 } \sup_{|\alpha|, |\gamma| \le 2} \big| \partial_x^\alpha \partial_y^\gamma \Cov[p_0(x),p_0(y)] \big| . \]
 By the white noise representation of $p_0$, for $|\alpha|, |\gamma| \le 2$,
\[ \left\lvert\partial^\alpha_x\partial^\gamma_y \Cov[p_0(x),p_0(y)]\right\rvert = \left\lvert  2 \int_{B_0} \partial^\alpha_xq(x-u)\partial^\gamma_yq(y-u)\;du \right\lvert, \]
where the exchange of derivative and integration is justified by the dominated convergence theorem, since $q \in C^3(\R^d)$ and $B_0$ is compact. By Assumption~\ref{a:clt} we conclude that $M_v\leq c_1 ( 1 + \lvert  v\rvert)^{-2\beta}$. Choosing $\tau= 2 c \sqrt{c_1} (1 + \lvert v\rvert )^{-\beta+\eta} \ge 2 c \sqrt{M_v}$ for some $\eta>0$, we have
\begin{displaymath}
 \P(v\in \U_0) \le c_2 \Big( ( 1 + \lvert  v\rvert)^{-(\beta-\eta)(1-\eps)}+e^{-c_3( 1 + \lvert  v\rvert)^{2\eta}} \Big)
\end{displaymath}
for some $c_2, c_3 > 0$ independent of $D$. Choosing $\eta,\eps>0$ small enough we can ensure that this expression is bounded by $c_\delta (1 + \lvert  v\rvert)^{-\beta+\delta}$ for any $\delta>0$ as required.
\end{proof}

The previous lemma, combined with the stability in Lemma \ref{l:Topological_stability}, shows that with high probability $\Delta_v(D) = 0$ if $v$ is far away from $D$. In the following lemma we control the ${(2+\eps)}$-moments of $\Delta_v(D)$; this makes essential use of the third moment bound in Theorem~\ref{t:tmb}.

\begin{lemma}[Bounded moments]\label{l:Bounded_moments}
For each $\eps\in[0,1-9d/\beta)$ the following hold:
\begin{enumerate}
    \item  There is a $c_1 > 0$ such that, for every box-domain $D$,
\begin{displaymath}
\E\left[\lvert\Delta_0(D)\rvert^{2+\eps}\right] \le c_1  .
\end{displaymath}
\item For every $a > 0$ and $\gamma < \beta(1-\eps)/3$ there is a $c_2 > 0$ such that, for every box-domain $D$ with $\mathrm{asp}(D) \ge a$, and $R \ge 1$
\[ \sum_{\mathrm{dist}(v,D)>R}\E\left[\lvert\Delta_v(D)\rvert^{2+\eps}\right]\leq c_2 \lvert D\rvert^3 R^{-\gamma}(R^d +R\lvert D\rvert^{\frac{d-1}{d}}). \]
\item For every $a > 0$ there is a $c_3 > 0$ such that, for every box-domain $D$ with $\mathrm{asp}(D) \ge a$,
\begin{displaymath}
\sum_{v\in\Z^d}\E\left[\lvert\Delta_v(D)\rvert^{2+\eps}\right]\leq c_3 \lvert D\rvert.
\end{displaymath}
\end{enumerate}
\end{lemma}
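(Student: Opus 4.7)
The plan is to combine the stability decomposition of Lemma~\ref{l:stab3}, the unstable-probability decay of Lemma~\ref{l:Prob_unstable_decay}, and the third moment bound of Theorem~\ref{t:tmb} through a H\"older argument applied cube-by-cube. Setting $X_{v,w} := N_c(D \cap B_w, f) + N_c(D \cap B_w, \widetilde{f}_v)$ and $V := \{w \in \Z^d : B_w \cap D \ne \emptyset\}$, Lemma~\ref{l:stab3} yields $|\Delta_v(D)| \le \sum_{w \in V} \one_{w \in \U_v(D)} X_{v,w}$. Applying the triangle inequality in $L^{2+\eps}$ and H\"older's inequality with conjugate exponents $p = 3/(2+\eps)$ and $p' = 3/(1-\eps)$ (chosen so that $(2+\eps)p = 3$) gives, for each $w$,
\[
\|\one_{w \in \U_v(D)} X_{v,w}\|_{L^{2+\eps}} \le \P(w \in \U_v(D))^{(1-\eps)/(3(2+\eps))} \, \E[X_{v,w}^3]^{1/3}.
\]
The moment $\E[X_{v,w}^3]$ is bounded uniformly in $v, w, D$: the top-dimensional contribution comes from Theorem~\ref{t:tmb} on the unit cube $B_w$, while stratified critical points on lower-dimensional faces are controlled by applying Theorem~\ref{t:tmb} in the corresponding lower dimension to the restriction of $f$ to each face (the non-degeneracies in Assumption~\ref{a:gen} survive restriction under Assumption~\ref{a:clt}). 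Combined with $\P(w \in \U_v(D)) \le c (1+|v-w|)^{-\beta+\delta}$ from Lemma~\ref{l:Prob_unstable_decay} and stationarity, this yields
\[
\|\one_{w \in \U_v(D)} X_{v,w}\|_{L^{2+\eps}} \le c (1 + |v - w|)^{-\alpha}, \qquad \alpha := \frac{(\beta-\delta)(1-\eps)}{3(2+\eps)},
\]
and, crucially, the hypothesis $\eps < 1 - 9d/\beta$ allows us to choose $\delta$ small enough to guarantee both $\alpha > d$ and $\alpha(2+\eps) > 3d$.

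Part (1) is then immediate, because $\sum_{w \in \Z^d}(1+|w|)^{-\alpha}$ is a finite constant independent of $D$. For part (2), writing $R_v = \dist(v, D)$, the pointwise bound $(1+|v - w|)^{-\alpha} \le R_v^{-\alpha}$ together with $|V| \le c|D|$ give $\|\Delta_v(D)\|_{L^{2+\eps}} \le c|D| R_v^{-\alpha}$, and hence $\E[|\Delta_v(D)|^{2+\eps}] \le c |D|^{2+\eps} R_v^{-\alpha(2+\eps)}$ for $R_v > R$. The aspect-ratio lower bound ensures that the number of $v \in \Z^d$ with $R_v \in [s, s+1)$ is at most $c(|D|^{(d-1)/d} + s^{d-1})$, and a standard integration gives
\[
\sum_{R_v > R} R_v^{-\alpha(2+\eps)} \le c \bigl(|D|^{(d-1)/d} R^{1 - \alpha(2+\eps)} + R^{d - \alpha(2+\eps)}\bigr).
\]
Since $\alpha(2+\eps) = (\beta-\delta)(1-\eps)/3$ can be made larger than any prescribed $\gamma < \beta(1-\eps)/3$, and since $|D|^{2+\eps} \le |D|^3$, this yields part (2). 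Part (3) then follows by splitting at $R = |D|^{1/d}$: the near part ($R_v \le R$) has $O(|D|)$ terms each bounded by $c_1$ via part (1) and so is $\le c|D|$, while the far part, via part (2) at this scale, is $c|D|^{4-\gamma/d}$; choosing $\gamma \in (3d, \beta(1-\eps)/3)$ --- a non-empty interval precisely because $\beta(1-\eps) > 9d$ under our hypothesis on $\eps$ --- makes this $\le c|D|$ as well.

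The main obstacle will be the uniform third moment bound $\E[X_{v,w}^3] \le c$. Theorem~\ref{t:tmb} as stated controls critical points in open domains of $\R^d$, whereas $X_{v,w}$ counts stratified critical points on faces of every dimension from $0$ to $d$. One must therefore apply the theorem (or its proof) in each lower dimension to the restriction of $f$ to the appropriate face, which requires verifying that the non-degeneracy conditions in Assumption~\ref{a:gen} propagate to such restrictions --- this is where the smoothness and spectral conditions of Assumption~\ref{a:clt} are essential. The exercise also makes clear the tightness of the threshold $\beta > 9d$: H\"older costs roughly a factor of $3$ (to move from $\E[X^{2+\eps}]$ up to $\E[X^3]$) and the volume of annular shells around $D$ costs a further factor of $3d$ in the summation over $v$, with $\beta > 9d$ being just enough to absorb both.
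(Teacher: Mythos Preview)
Your proof is correct and follows the same overall plan as the paper: combine the stability bound of Lemma~\ref{l:stab3}, the third moment bound of Theorem~\ref{t:tmb}, and the unstable-probability decay of Lemma~\ref{l:Prob_unstable_decay} through H\"older. The one genuine technical difference is in how the $(2+\eps)$-moment of $\sum_w \overline{N}_c(B_w)\ind_{w\in\U_v}$ is handled. You apply the triangle inequality in $L^{2+\eps}$ term-by-term and then a single two-factor H\"older with exponents $3/(2+\eps)$ and $3/(1-\eps)$. The paper instead writes $(\sum_w \cdots)^{2+\eps} = ((\sum_w \cdots)^3)^{(2+\eps)/3}$, expands the cube into a triple sum, uses the reverse Minkowski inequality (valid since $(2+\eps)/3 < 1$) to pull the power inside, and then applies a six-factor H\"older. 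Both routes land on the same decay exponent $\alpha(2+\eps) = (\beta-\delta)(1-\eps)/3$ for part~(2), and your annular summation and the split at $R \asymp |D|^{1/d}$ for part~(3) match the paper exactly. Your version is arguably more direct; it also requires the slightly weaker summability condition $(\beta-\delta)(1-\eps) > 3d(2+\eps)$ for part~(1) rather than the paper's $>9d$, though this does not enlarge the stated range of~$\eps$. The discussion of the stratified third moment (applying Theorem~\ref{t:tmb} face-by-face) is at the same level of detail the paper gives.
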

\begin{proof}
Recall that $N_c(D \cap B_v,g)$ denotes the number of stratified critical points of $g$ in $D \cap B_v$, and define $\overline{N}_c(B_v) = N_c(D \cap B_v,f) + N_c(D \cap B_v,\widetilde{f}_0)$. By Lemma~\ref{l:stab3} (applied to $g = f$ and $p = p_0)$
\begin{equation}\label{e:Triangle_ineq_2}
\lvert\Delta_0(D)\rvert\leq\sum_{v\in V}\overline{N}_c(B_v)\ind_{v\in \U_0}.
\end{equation}
In order to control the $(2+\eps)$-moment of this quantity, we use the following elementary bound which follows from the reverse Minkowski inequality: for $x_1,\dots,x_n\geq 0$ and ${p\in(0,1)}$
\begin{equation}\label{e:Reverse_Minkowski}
\Big(\sum_{i=1}^nx_i^p\Big)^{1/p}\geq(x_1^p)^{1/p}+\dots (x_n^p)^{1/p}=\sum_{i=1}^nx_i.
\end{equation}
Combining this with \eqref{e:Triangle_ineq_2} yields, for any $\eps \in (0,1)$,
\begin{align*}
\lvert\Delta_0(D)\rvert^{2+\eps}&\leq\bigg(\sum_{v\in V}\overline{N}_c\left(B_v\right)\ind_{v\in \U_0}\bigg)^{2+\eps}=\bigg(\Big(\sum_{v\in V}\overline{N}_c\left(B_v\right)\ind_{v\in \U_0}\Big)^{3}\bigg)^{\frac{2+\eps}{3}}\\
&=\bigg(\sum_{u,v,w\in V}\overline{N}_c\left(B_u\right)\overline{N}_c\left(B_v\right)\overline{N}_c\left(B_w\right)\ind_{u,v,w\in 
\U_0}\bigg)^{\frac{2+\eps}{3}}\\
&\leq\sum_{u,v,w\in V}\Big(\overline{N}_c\left(B_u\right)\overline{N}_c\left(B_v\right)\overline{N}_c\left(B_w\right)\Big)^{\frac{2+\eps}{3}}\ind_{u,v,w\in \U_0}.
\end{align*}
Taking expectations and using H\"older's inequality we have
\begin{equation}\label{e:HolderBM}
\E\left[\lvert\Delta_0(D)\rvert^{2+\eps}\right]\leq\sum_{u,v,w\in V}\prod_{i\in\{u,v,w\}}\E\Big[\overline{N}_c\left(B_i\right)^3\Big]^{\frac{2+\eps}{9}}\P(i\in \U_0)^{\frac{1-\eps}{9}}.
\end{equation}
By the third moment bound (Theorem~\ref{t:tmb}) applied to $f$ and $\widetilde{f}_0$ over each stratum of $B_i$, and by stationarity, we have that $\E[\overline{N}_c\left(B_i\right)^3]$ is uniformly bounded over $i\in\Z^d$. We therefore see that
\begin{equation}\label{e:Bounded_moments2}
\E\left[\lvert\Delta_0(D)\rvert^{2+\eps}\right]\leq c_4\bigg(\sum_{v\in V}\P\left(v\in \U_0\right)^{\frac{1-\eps}{9}}\bigg)^3\leq c_4\bigg(\sum_{v\in \Z^d}\P\left(v\in \U_0\right)^{\frac{1-\eps}{9}}\bigg)^3
\end{equation}
for a constant $c_4>0$ depending only on $f$. Lemma~\ref{l:Prob_unstable_decay} shows that this summand is bounded by $c_5(1+\lvert v\rvert)^{-(\beta-\delta)(1-\eps)/9}$ for any $\delta>0$. Choosing $\delta$ sufficiently small so that ${(\beta-\delta)(1-\epsilon)/9>d}$ (which is possible since $\eps<1-9d/\beta$) ensures that the exponent is less than $-d$ and so \eqref{e:Bounded_moments2} is bounded uniformly over $D$.

We turn to the second statement of the lemma. Let $\delta > 0$ and define $\gamma = (\beta-\delta)(1-\epsilon)/3$; without loss of generality we may assume $\delta$ is sufficiently small so that $\gamma > 3d$. By~\eqref{e:Bounded_moments2} and stationarity,
\begin{align}
\nonumber    \sum_{\mathrm{dist}(v,D)>R}\E\left[\lvert\Delta_v(D)\rvert^{2+\eps}\right]&\leq c_6\sum_{\mathrm{dist}(v,D)>R}\bigg(\sum_{w\in V}(1+\lvert v-w\rvert)^{-\gamma/3}\bigg)^3\\
 \nonumber   &\leq c_6\sum_{\mathrm{dist}(v,D)>R}\bigg(\sum_{w\in V}\mathrm{dist}(v,D)^{-\gamma/3}\bigg)^3\\
 \label{e:BddMoments}   &\leq c_6\lvert D\rvert^3\sum_{\mathrm{dist}(v,D)>R}\mathrm{dist}(v,D)^{-\gamma}.
\end{align}
We now claim that
\begin{equation}\label{e:BddMoments2}
\begin{aligned}
\sum_{\mathrm{dist}(v,D)>R}\mathrm{dist}(v,D)^{-\gamma}\leq c_7 \Big( R^{-\gamma+d}+ \lvert D\rvert^{\frac{d-1}{d}}R^{-\gamma+1} \Big) ,
\end{aligned}
\end{equation}
where $c_7> 0$ depends on $d$, $\gamma$ and $a$. Then combining \eqref{e:BddMoments} and \eqref{e:BddMoments2} establishes the second statement of the lemma. It remains to prove \eqref{e:BddMoments2}, which we do below in Lemma~\ref{l:ElementarySum}.

For the third statement, we take $R$ to be the shortest side length of $D$, which is comparable to $\lvert D\rvert^{1/d}$ by assumption, and partition $\Z^d$ into regions with distance greater/less than $R$ from~$D$. Then choose $\gamma \in (3d, \beta(1-\eps)/3)$, and combine the first two statements of the lemma.
\end{proof}

\begin{lemma}\label{l:ElementarySum}
    Let $\gamma>d$ and let $D$ be a box-domain such that $\mathrm{asp}(D)\geq a>0$ and $\lvert D\rvert\geq 1$, then there exists a constant $c>0$ depending only on $d$, $\gamma$ and $a$ such that for $R\geq 1$
    \begin{displaymath}
        \sum_{v\in\Z^d\;:\;\dist(v,D)>R}\dist(v,D)^{-\gamma}\leq c(R^{-\gamma+d}+\lvert D\rvert^{\frac{d-1}{d}}R^{-\gamma+1}).
    \end{displaymath}
\end{lemma}
\begin{proof}
    Let $D^\prime=[a_1,b_1]\times\dots\times[a_d,b_d]$ be the smallest box-domain containing $D$ such that $a_i,b_i\in\Z$ for all $i$. It is enough to prove the lemma for $D^\prime$, since $\dist(v,D^\prime)\leq\dist(v,D)$ for any $v$ and $\lvert D^\prime\rvert\leq c_d\lvert D\rvert$. For $v\in\Z^d$ we write $C(v)$ for the closest point to $v$ in $D^\prime\cap\Z^d$. The idea of the proof is to partition $\{v\;|\;\dist(v,D^\prime)>R)\}$ according to the value of $C(v)$.

    For $i=0,1,\dots,d$ let $\mathrm{Face}_i$ denote the points $x\in D^\prime\cap \Z^d$ such that $d+i$ of their nearest neighbours (in $\Z^d$) are also contained in $D^\prime$. So for example $\mathrm{Face}_0$ denotes the corners of $D^\prime$ and $\mathrm{Face}_d$ denotes the points of $\Z^d$ in the interior of $D^\prime$. We note that the number of points in $\mathrm{Face}_i$ is at most $c \lvert D^\prime\rvert^{\frac{i}{d}}$, where $c$ depends on $d$ and $a$, by elementary geometric considerations.

    We define $S_x=\{v\;|\;\dist(v,D^\prime)>R, C(v)=x\}$. Observe that $S_x=\emptyset$ whenever $x\in\mathrm{Face}_d$. Moreover when $x\in\mathrm{Face}_i$ and $v\in S_x$, $x-v$ must be orthogonal to each of the $i$ directions in which both neighbours of $x$ are contained in $D^\prime$. In other words $S_x-x$ is contained in a subspace of dimension $d-i$ and hence
    \begin{displaymath}
        \sum_{v\in S_x}\lvert v-x\rvert^{-\gamma}\leq \sum_{y\in\Z^{d-i}\;:\;\lvert y\rvert>R}\lvert y\rvert^{-\gamma}\leq c_\gamma R^{-\gamma+d-i}.
    \end{displaymath}
    We then conclude that
    \begin{align*}
        \sum_{v\in\Z^d\;:\;\dist(v,D^\prime)>R}\dist(v,D^\prime)^{-\gamma}=\sum_{i=0}^{d-1}\sum_{x\in\mathrm{Face}_i}\sum_{v\in S_x}\lvert v-x\rvert^{-\gamma}&\leq \sum_{i=0}^{d-1}c_d \lvert D^\prime\rvert^{i/d} c_\gamma R^{-\gamma+d-i}\\
        &\leq c(R^{-\gamma+d}+\lvert D^\prime\rvert^{\frac{d-1}{d}}R^{-\gamma+1})
    \end{align*}
    as required.
\end{proof}

The next ingredient is a `stabilisation' property that $\Delta_0(D_n)$ converges almost surely as $D_n \to\R^d$, which follows essentially from the fact that, by Lemma~\ref{l:Prob_unstable_decay}, the unstable set ${\mathcal{U}_0 = \cup_n \mathcal{U}_0(D_n)}$ is almost surely finite (at least, as long as $\partial D_n$ does not intersect any cube $B_v$ too many times). This requires only the weaker assumption $\beta > d$.

A sequence of box-domains $D_n \to \R^d$ will be called \textit{well-spaced} if the number of indices $n$ for which $\partial D_n$ intersects $B_v$ is bounded over $v \in \Z^d$.

\begin{lemma}[Stabilisation]\label{l:Stabilisation}
For every well-spaced sequence of box-domains $D_n\to\R^d$ there exists a random variable $\Delta_0(\R^d)$ such that 
\begin{displaymath}
\Delta_0(D_n)\xrightarrow{a.s.}\Delta_0(\R^d)\qquad\text{as }n\to\infty.
\end{displaymath}
\end{lemma}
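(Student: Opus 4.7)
The plan is to show that the set of cubes that can contribute to $\Delta_0(D_n)$ is almost surely finite and eventually $n$-independent, and then to use Lemma~\ref{l:Topological_stability} to cancel the contributions of components lying outside this set. What remains is a finite, $n$-stabilising count of components meeting a bounded random region.

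First I would establish that $\mathcal{U}^\ast := \bigcup_n \mathcal{U}_0(D_n) \subset \Z^d$ is a.s.\ finite. Lemma~\ref{l:Prob_unstable_decay} gives $\P(v \in \mathcal{U}_0(D_n)) \le c(1+|v|)^{-\beta+\delta}$ uniformly in $n$. For a fixed $v$, the well-spaced hypothesis bounds by some universal $M$ the number of indices $n$ for which $\partial D_n \cap B_v \ne \emptyset$; for every other $n$ the event $\{v \in \mathcal{U}_0(D_n)\}$ reduces to the $n$-independent event that $(f,p_0)$ is unstable on the whole cube $B_v$. A union bound therefore gives $\sum_v \P(v \in \mathcal{U}^\ast) \le (M+1) c \sum_v (1+|v|)^{-\beta+\delta} < \infty$, after choosing $\delta < \beta - d$ (possible since $\beta > 9d$). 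Borel--Cantelli yields $|\mathcal{U}^\ast| < \infty$ a.s., and I set $K := \bigcup_{v \in \mathcal{U}^\ast} B_v$, an a.s.\ bounded random subset of $\R^d$.

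Next, because $D_n \to \R^d$ and $K$ is a.s.\ bounded, there exists (a.s.) some $N = N(\omega)$ such that for all $n \ge N$ we have $K \subset \mathrm{int}(D_n)$ and (by the well-spaced hypothesis applied to the finitely many cubes touching $\mathcal{U}^\ast$) $\partial D_n \cap K = \emptyset$. I then form the stratified subdomain $D_n^\circ := \mathcal{R}_n \cap \bigcup_{v \in V(D_n) \setminus \mathcal{U}^\ast} B_v$, on which $(f,p_0)$ is stable by construction. Lemma~\ref{l:Topological_stability} gives
\[ N_\star(D_n^\circ, f, \ell) \;=\; N_\star(D_n^\circ, \tilde f_0, \ell) . \]
A topological check (using $K \subset \mathrm{int}(D_n)$ to identify $\mathrm{int}(D_n^\circ)$ with $\mathrm{int}(D_n) \setminus K$) shows that the components contributing to $N_\star(D_n^\circ, g, \ell)$ are exactly those contributing to $N_\star(D_n, g, \ell)$ that are disjoint from $K$. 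Writing $A_g(n) := \#\{\text{components of } \{g \ge \ell\} \text{ contained in } \mathrm{int}(D_n) \text{ and meeting } K\}$, this yields $\Delta_0(D_n) = A_f(n) - A_{\tilde f_0}(n)$ for all $n \ge N$.

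Finally, for $n \ge N$ each $A_g(n)$ is non-decreasing in $n$ and converges to the number of bounded components of $\{g \ge \ell\}$ that intersect $K$. Under Assumption~\ref{a:clt}, $f$ and $\tilde f_0$ are a.s.\ Morse at level $\ell$, so their excursion/level set components are locally finite; since $K$ is bounded, only finitely many can intersect it and both limits are a.s.\ finite. Hence $\Delta_0(D_n) \to \Delta_0(\R^d) := A_f(\infty) - A_{\tilde f_0}(\infty)$ almost surely. The main obstacle is the topological bookkeeping in the previous paragraph: one must carefully match components counted by $N_\star(D_n, g, \ell) - N_\star(D_n^\circ, g, \ell)$ with those meeting $K$, paying particular attention along $\partial K$, where strata sit simultaneously on the boundary of a $\mathcal{U}^\ast$-cube and of an adjacent stable cube and one must invoke stability of the latter to close the argument.
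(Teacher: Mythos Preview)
Your approach is essentially the same as the paper's: show the cumulative unstable set $\mathcal{U}^\ast$ is a.s.\ finite (via Lemma~\ref{l:Prob_unstable_decay}, the well-spaced hypothesis, and Borel--Cantelli), remove the corresponding cubes, apply Lemma~\ref{l:Topological_stability} to cancel the contribution of the stable region, and observe that what remains is a count of components meeting a fixed bounded set, which stabilises. The paper makes exactly these moves, except that instead of your $K=\bigcup_{v\in\mathcal{U}^\ast}B_v$ it uses a cube $\Lambda_{R_1}\supset K$, and then explicitly picks $R_2$ so that every bounded component of $\{f=\ell\}$ intersecting $\Lambda_{R_1}$ is contained in $\Lambda_{R_2}$; once $\Lambda_{R_2}\subset D_n$ the count is manifestly constant. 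Working with $\Lambda_{R_1}$ rather than $K$ also sidesteps the boundary bookkeeping along $\partial K$ that you flag as the main obstacle.

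One small slip: you claim $A_g(n)$ is non-decreasing in $n$, but the definition of $D_n\to\R^d$ does not require monotonicity of the sequence. The correct (and equally easy) statement is that $A_g(n)$ is \emph{eventually constant}: only finitely many bounded components of $\{g\ge\ell\}$ (or $\{g=\ell\}$) meet $K$, their union is bounded, and once $D_n$ contains it all of them are counted while unbounded components meeting $K$ are never counted. With this adjustment your argument goes through.
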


\begin{proof}
Define the set
\[ \mathcal{U}_0:=  \bigcup_n \mathcal{U}_0(D_n) = \{w\in \Z^d\;|\;(f,p_0)\text{ is unstable on }D_n\cap B_w\text{ for some n} \} . \]
By Lemma~\ref{l:Prob_unstable_decay}, the fact that $D_n$ is well-spaced and the Borel-Cantelli lemma, $\mathcal{U}_0$ is finite almost surely. Fix such a realisation of $f$ and $p_0$, and choose $0 < R_1 < R_2$ such that (i) $B_w \subset \Lambda_{R_1}$ for all $w \in \mathcal{U}_0$, and (ii) every bounded component of $\{f=\ell\}$ which intersects $\Lambda_{R_1}$ is contained in $\Lambda_{R_2}$ (i.e.\ does not intersect $\R^d\backslash\Lambda_{R_2}$).

We claim that if $\Lambda_{R_2} \subseteq D_n$, then
\begin{equation}
    \label{e:stab1}
 \Delta_0(D_n)=\overline{N}_\star(\Lambda_{R_1},f,\ell)-\overline{N}_\star(\Lambda_{R_1},\widetilde{f}_0,\ell) ,
 \end{equation}
where $\overline{N}_\star(\Lambda_{R_1},f,\ell)$ denotes the number of bounded components of $\{f\geq\ell\}$ (or $\{f=\ell\}$ if $\star=\mathrm{LS}$) which intersect $\Lambda_{R_1}$. This concludes the proof of the lemma by taking $n \to \infty$.

To prove \eqref{e:stab1}, observe that
\begin{displaymath}
N_\star(D_n,f,\ell)=N_\star(D_n \backslash\Lambda_{R_1},f,\ell)+\overline{N}_\star(\Lambda_{R_1},f,\ell)
\end{displaymath}
which holds because any component of $\{f\geq\ell\}$ (or $\{f=\ell\}$) contained in $D_n$ is either contained in $D_n \setminus\Lambda_{R_1}$ or intersects $\Lambda_{R_1}$, and by the definition of $R_2$, all components of the second type are contained in $D_n$. This also holds if we replace $f$ by $\widetilde{f}_0$ of course, and so
\begin{displaymath}
\Delta_0(D_n)=\Delta_0(D_n \backslash\Lambda_{R_1})+\overline{N}_\star(\Lambda_{R_1},f,\ell)-\overline{N}_\star(\Lambda_{R_1},\widetilde{f}_0,\ell).
\end{displaymath}
Finally, by Lemma~\ref{l:Topological_stability} (applied to $g = f$ and $p = p_0$) and the fact that  $B_w \subset \Lambda_{R_1}$ for all $w \in  \mathcal{U}_0$, we have $\Delta_0(D\backslash\Lambda_{R_1})=0$. Combining these gives \eqref{e:stab1}.
\end{proof}

Finally we will require an ergodic theorem to prove the convergence in \eqref{e:MCLT3}. The following theorem will be well suited to our purposes, as it allows us to sum over translations in $d$ dimensions.
\begin{theorem}[Multi-variate ergodic theorem {\cite[Theorem~25.12]{kal21}}]\label{t:ergodic}
    Let $\xi$ be a random element in some set $S$ with distribution $\mu$. Let $T_1,\dots,T_d$ be $\mu$-preserving transformations of $S$. Assume that the invariant $\sigma$-algebra of each $T_i$ is trivial and let $F\in L^p(\mu)$ for some $p>1$, then as $n_1,\dots,n_d\to\infty$
    \begin{displaymath}
        \frac{1}{n_1\dots n_d}\sum_{i=1}^d\sum_{k_i\leq n_i}F(T_1^{k_1}\dots T_d^{k_d}\xi)\to\E[F(\xi)]
    \end{displaymath}
    where convergence occurs almost surely and in $L^p$.
\end{theorem}
With these results in hand, we are ready to prove Theorem~\ref{t:clt_gen}:

\begin{proof}[Proof of Theorem~\ref{t:clt_gen}]
We first fix a well-spaced sequence of box-domains $D_n \to \R^d$ and prove the result for this sequence, that is, we prove that there exists $\sigma^2 \ge 0$ (possibly depending on $D_n$) such that, as $n\to\infty$
\begin{equation}
\label{e:clt_dn}
\frac{\Var[N_\star(D_n)]}{\lvert D_n\rvert}\to\sigma^2 \quad \text{and} \quad
\frac{N_\star(D_n) -  \E[N_\star(D_n)]}{\lvert D_n\rvert^{1/2}}\xrightarrow{d} \sigma Z ,
\end{equation}
where $Z$ is a standard normal random variable. At the end of the proof we will argue that $\sigma^2$ does not depend on the sequence $D_n$, and also lift the requirement for $D_n$ to be well-spaced.

Recall from Lemma~\ref{l:RegularInf} that it is sufficient for us to verify conditions \eqref{e:MCLT1}--\eqref{e:MCLT4} for $U_{n,v}=\lvert D_n\rvert^{-1/2}\E[\Delta_v(D_n)\;|\;\mathcal{F}_v]$.

 Consider first \eqref{e:MCLT1}. Fix $\eta>0$ and $\eps\in(0,1-9d/\beta)$. By applying the union bound, Markov's inequality, Jensen's inequality, and the third statement of Lemma~\ref{l:Bounded_moments}, we have
\begin{align}
\nonumber \P\Big(\sup_{v\in\Z^d}\lvert U_{n,v}\rvert>\eta\Big)&\leq \eta^{-(2+\eps)} \sum_{v\in\Z^d}\E\big[\lvert U_{n,v}\rvert^{2+\eps}\big] \\
 \nonumber & \leq\eta^{-(2+\eps)}\lvert D_n\rvert^{-\frac{2+\eps}{2}}\sum_{v\in\Z^d}\E\left[\lvert\Delta_v(D_n)\rvert^{2+\eps}\right]\leq c_3 \eta^{-(2+\eps)}\lvert D_n\rvert^{-\frac{\eps}{2}}.
\end{align}
Since this converges to zero as $n\to\infty$, we have verified~\eqref{e:MCLT1}.

We can use similar estimates for \eqref{e:MCLT2}: replacing the supremum by a sum, using the conditional Jensen inequality and the third statement of Lemma~\ref{l:Bounded_moments},
\begin{align*}
\E\Big[\sup_{v\in\Z^d}U_{n,v}^2\Big]&\leq\frac{1}{\lvert D_n\rvert}\sum_{v\in\Z^d}\E\left[\Delta_v(D_n)^2\right]\leq c_3,
\end{align*}
as required. Furthermore by the conditional Jensen inequality, the fact that $\Delta_v$ is integer valued and the above bound
\begin{displaymath}
    \E\left[\sum_{v\in\Z^d}\lvert U_{n,v}\rvert\right]\leq\lvert D_n\rvert^{-1/2}\sum_{v\in\Z^d}\E\left[\lvert\Delta_v(D_n)\rvert\right]\leq\lvert D_n\rvert^{-1/2} \sum_{v\in\Z^d}\E\left[\Delta_v(D_n)^2\right]<\infty
\end{displaymath}
verifying \eqref{e:MCLT4}.

We turn now to \eqref{e:MCLT3}. Letting $\tau_v$ denote translation by $v$ (for $v\in\Z^d$), we note that the sequence of random variables $\Delta_v(D_n)$ for $n\in\N$ has the same distribution as the sequence $\Delta_0(\tau_{-v}D_n)$. Therefore by Lemma~\ref{l:Stabilisation}, for each $v\in\Z^d$ there exists a random variable $\Delta_v(\R^d)$ such that $\Delta_v(D_n)\xrightarrow{a.s.}\Delta_v(\R^d)$ as $n\to\infty$. For $v\in\Z^d$ and a box-domain $D$ let
\begin{displaymath}
X_v(D)=\E[\Delta_v(D)\;|\;\mathcal{F}_{v}] \quad\text{and}\quad X_v=\E[\Delta_v(\R^d)\;|\;\mathcal{F}_{v}].
\end{displaymath}
The statement we need to prove is that
\begin{equation}\label{e:L1convergence}
\lvert D_n\rvert^{-1}\sum_{v\in\Z^d}X_v^2(D_n)\xrightarrow{L^1}\sigma^2
\end{equation}
as $n\to\infty$. Let $V_n = \{w \in \Z^d : D_n \cap B_w \neq \emptyset\}$ denote the indices of cubes which intersect $D_n$. Our strategy is roughly to show that the sum over $v\notin V_n$ is negligible whilst for $v\in V_n$ we can approximate $X_v^2(D_n)$ by $X_v^2$ and hence apply the ergodic theorem. 

Aiming towards the setting of Theorem~\ref{t:ergodic}, we let $\xi=(W_v,W^\prime_v)_{v\in\Z^d}$ denote the pair of white noise processes used to define $f$ and $(\widetilde{f}_v)_{v\in\Z^d}$ and work with the probability space induced by the distribution of $\xi$. Let $T_1,\dots,T_d$ denote translation by distance $1$ in the positive direction of each of the coordinate axes respectively. These are clearly measure preserving transformations of $\xi=(W_v,W^\prime_v)_{v\in\Z^d}$. Moreover since the $W_v$ and $W^\prime_v$ are independent and identically distributed, the $\sigma$-algebra of invariant events associated to each such transformation is trivial (this follows from a standard argument using Kolmogorov's 01-law). In the following paragraph we adjust our notation to emphasise the dependence on the underlying white noise processes: for example we write $X_v(D,\xi):=X_v(D)$, $X_v(\xi):=X_v$ and similarly for other variables. We claim that for any $v\in\Z^d$
\begin{equation}\label{e:ErgodicClaim}
    X_v(\xi)=X_0(\tau_{-v}\xi)
\end{equation}
where $\tau_{-v}\xi=(\tau_{-v}W,\tau_{-v}W^\prime)$ denotes the translated white noise processes defined by $(\tau_{-v}W)_u=W_{u+v}$ for $u\in\Z^d$ (and similarly for $W^\prime$). To prove this, first note that for any box-domain $D$
\begin{displaymath}
    \Delta_v(D,\xi)=\Delta_0(\tau_{-v}D,\tau_{-v}\xi).
\end{displaymath}
This follows from the definition of $\Delta_v$ in \eqref{e:DeltaDef} since translating $W$ and $W^\prime$ by $-v$ is equivalent to translating $f$ and $\widetilde{f}$ by $-v$ (courtesy of \eqref{e:wnr}). Choosing a sequence of box-domains $D_n\to\R^d$ (and noting that this is equivalent to $\tau_{-v}D_n\to\R^d$ for fixed $v$) we see from Lemma~\ref{l:Stabilisation} that
\begin{displaymath}
    \Delta_v(D_n,\xi)\to\Delta_v(\R^d,\xi)\quad\text{and}\quad\Delta_0(\tau_{-v}D_n,\tau_{-v}\xi)\to\Delta_0(\R^d,\tau_{-v}\xi)\quad\text{almost surely}.
\end{displaymath}
Hence these limits coincide. Finally noting that
\begin{equation}\label{e:filtration_stationary}
\begin{aligned}
        \mathcal{F}_0(\tau_{-v}W):=\sigma((\tau_{-v}W)_u\;|\;u\preceq 0)&=\sigma(W_{u+v}\;|\;u\preceq 0)\\
        &=\sigma(W_{u}\;|\;u\preceq v)=:\mathcal{F}_v(W),
\end{aligned}
\end{equation}
where the penultimate equality relies on our use of the lexicographic ordering, we see that
\begin{displaymath}
    X_v(\xi)=\E[\Delta_v(\R^d,\xi)\;|\;\mathcal{F}_v(W)]=\E[\Delta_0(\R^d,\tau_{-v}\xi)\;|\;\mathcal{F}_0(\tau_{-v}W)]=X_0(\tau_{-v}\xi).
\end{displaymath}
This verifies \eqref{e:ErgodicClaim}. By Fatou's lemma, Lemma~\ref{l:Stabilisation} and the first part of Lemma~\ref{l:Bounded_moments}
\begin{equation}\label{e:Fatou}
    \E[\lvert X_0(\xi)\rvert^{2+\epsilon}]\leq\liminf_{n\to\infty}\E[\lvert X_0(D_n,\xi)\rvert^{2+\epsilon}]\leq c_1
\end{equation}
so $X_0^2$ has a finite $(1+\epsilon/2)$-th moment. Hence, noting that $\tau_v=T_1^{v_1}\dots T_d^{v_d}$ for $v\in\Z^d$, we may apply Theorem~\ref{t:ergodic} to conclude that
\begin{equation}\label{e:ergodic}
\lvert D_n\rvert^{-1}\sum_{v\in V_n}X_v^2=\lvert D_n\rvert^{-1}\sum_{v\in V_n}X_0^2(\tau_{-v}\xi)\xrightarrow{L^1}\E[X_0^2]
\end{equation}
as $n\to\infty$.

It remains to compare the left-hand side of \eqref{e:L1convergence} to $\lvert D_n\rvert^{-1}\sum_{v\in V_n} X_v^2$. As such, for each $n\in\N$, choose box-domains $D^-_n \subset D_n \subset D_n^+$ of the form
\begin{displaymath}
D_n^\pm:=\bigcup_{v\in V_n^\pm}B_v
\end{displaymath}
for index sets $V_n^\pm\subset\Z^d$ such that, as $n \to \infty$,
\[  \zeta_n := \max \{  1 - |D^-_n|/|D_n| , |D^+_n|/|D_n| - 1    \} \to 0   \]
and
\[ \eta_n :=\min\{\mathrm{dist}(V_n^-,\Z^d\backslash V_n),\mathrm{dist}(\Z^d\backslash V_n^+,V_n)\}\sim \lvert D_n\rvert^\frac{1-\lambda}{d} \]
where $\lambda>0$ will be specified below. Roughly speaking this means that the distance between $D_n^-$ (resp.\ $D_n^+$) and $D_n$ goes to infinity, but slowly compared to the order of $\lvert D_n\rvert$.

We now show that the contribution to \eqref{e:L1convergence} from $v$ outside $V_n^+$ is negligible; by the second statement of Lemma~\ref{l:Bounded_moments}, for every $\gamma < \beta/3$ there is a $c_\gamma > 0$ such that,
\begin{equation}
    \label{e:outside}
    \begin{aligned}
    \frac{1}{\lvert D_n\rvert}\E\Big[\sum_{v\notin V_n^+}X_v^2(D_n)\Big] \leq c_\gamma \lvert D_n\rvert^2\eta_n^{-\gamma}(\eta_n^d+\eta_n\lvert D_n\rvert^{\frac{d-1}{d}})\leq c_\gamma \lvert D_n\rvert^{3-\lambda/d-(1-\lambda)\frac{\gamma}{d}}.
    \end{aligned}
\end{equation}
Since $\gamma>3d$, this expression will converge to zero provided we choose $\lambda>0$ sufficiently small.

We next claim that the contributions from $v$ inside $V_n^-$ are well-approximated by their stationary counterparts, i.e.\
\begin{equation}
    \label{e:inside}
\lvert D_n\rvert^{-1}\sum_{v\in V_n^-}(X_v^2(D_n)-X_v^2)\xrightarrow{L^1}0.
\end{equation}
Clearly it is sufficient to show that
   \begin{displaymath}
\lim_{n\to\infty}\sup_{v\in V_n^-}\E\left[\left\lvert X_v^2(D_n)-X_v^2\right\rvert\right]=0.
\end{displaymath}
Suppose that this was not true, so there exists some sequence of points $v_n\in V_n^-$ such that
\begin{equation}\label{e:contradiction}
0<\liminf_{n\to\infty}\E\left[\left\lvert X_{v_n}^2(D_n)-X_{v_n}^2\right\rvert\right]=\liminf_{n\to\infty}\E\left[\left\lvert X_{0}^2(\tau_{-v_n}D_n)-X_{0}^2\right\rvert\right].
\end{equation}
We note that $\tau_{-v_n}D_n$ converges to $\R^d$ since $\eta_n\to\infty$. Hence by Lemma~\ref{l:Stabilisation},
\begin{displaymath}
X_{0}^2(\tau_{-v_n}D_n)\xrightarrow{a.s.}X_{0}^2\quad\text{as }n\to\infty.
\end{displaymath}
Moreover by the first statement of Lemma~\ref{l:Bounded_moments}, $X_{0}^2(\tau_{-v_n}D_n)-X_{0}^2$ is uniformly integrable. However these two facts contradict \eqref{e:contradiction} and so \eqref{e:inside} is proved.

Finally, we note that the contributions to \eqref{e:L1convergence} from $V^+_n \setminus V^-_n$ (or $V_n \setminus V^-_n$ for the stationary counterparts) are also negligible. Specifically, by the first statement of Lemma~\ref{l:Bounded_moments}
\begin{equation}\label{e:negligible1}
\lvert D_n\rvert^{-1}\E\Big[\sum_{v\in V_n^+\backslash V_n^-}X_v^2(D_n)\Big]\leq 2\zeta_n\sup_{v\in\Z^d}\E[X_v^2(D_n)]\leq 2c_1 \zeta_n \to 0,
\end{equation}
and similarly by \eqref{e:Fatou}
\begin{equation}\label{e:negligible2}
\lvert D_n\rvert^{-1}\E\Big[\sum_{v\in V_n\backslash V_n^-}X_v^2\Big] \leq c_1 \zeta_n \to 0.
\end{equation}
Combining \eqref{e:outside}, \eqref{e:inside}, \eqref{e:negligible1} and \eqref{e:negligible2} with \eqref{e:ergodic} allows us to conclude that \eqref{e:L1convergence} holds. This verifies the final condition required, and hence proves \eqref{e:clt_dn}.

It remains to argue that $\sigma^2$ is independent of the choice of the sequence $D_n$, and that we may lift the requirement that $D_n$ be well-spaced. To prove the former, suppose $E_n \to \R^d$ and $F_n \to \R^d$ are two well-spaced sequences such that \eqref{e:clt_dn} holds for distinct $\sigma_E^2$ and $\sigma_F^2$ respectively. Consider an alternating sequence $G_n$, that is, $G_n = E_n$ if $n$ is odd, and $G_n = F_n$ if $n$ is even. Since $G_n \to \R^d$ and $G_n$ is also well-spaced, \eqref{e:clt_dn} holds for a constant $\sigma^2_G$, which is in contradiction with the fact that \eqref{e:clt_dn} holds for $\sigma^2_E$ and $\sigma^2_F$ along subsequences of odd, respectively even, indices.

To lift the requirement that $D_n$ be well-spaced, we fix the (unique) value of $\sigma^2$ established in the previous paragraph. Then let $D_n \to \R^d$ be arbitrary, and suppose for the sake of contradiction that \eqref{e:clt_dn} does not hold for $\sigma^2$. Then by compactness there exists an $s \in \R$ and a constant $p \in [0,1]$ such that $p \neq \P[Z \ge s/\sigma] $ (interpreted as $0$ if $\sigma^2 = 0$), satisfying 
\[ \P \big[ |D_n|^{-1/2} (N_\star(D_n) - \E[N_\star(D_n)] )  \ge s \big] \to p \]
along a subsequence. Since $D_n \to \R^d$, one can extract a further subsequence such that $D_n$ is well-spaced. However since \eqref{e:clt_dn} holds for this subsequence, we have a contradiction.
\end{proof}

\begin{remark}
From the above proof (in particular \eqref{e:ergodic}) it is apparent that the limiting variance stated in Theorems~\ref{t:clt} and~\ref{t:clt_gen} is given by
\begin{equation}\label{e:sigma}
\sigma^2=\E\left[\E[\Delta_0(\R^d)\;|\;\mathcal{F}_0]^2\right]=\E\left[\E\left[\lim_{n\to\infty }N_\star(\Lambda_n,f,\ell)-N_\star(\Lambda_n,\widetilde{f}_0,\ell)\;\middle|\;\mathcal{F}_0\right]^2\right] ,
\end{equation}
where $\Delta_0(\R^d)$ is the random variable defined in Lemma \ref{l:Stabilisation} for the sequence $D_n = \Lambda_n$ (in fact one can take any well-spaced $D_n \to \R^d$ in place of $\Lambda_n$). This expression highlights the importance of the choice of filtration for our proof: clearly \eqref{e:sigma} could not hold simultaneously for arbitrary filtrations satisfying the normalisation \eqref{e:Filtration_normalisation}. Our application of the ergodic theorem relies crucially on the equality in \eqref{e:filtration_stationary}. This property holds only for the standard lexicographic order up to reflection/reordering of the axes.
\end{remark}

\begin{remark}
\label{r:beta}
Recall that our proof requires the correlation decay $\beta > 9d$ in Assumption~\ref{a:clt}. This condition arises from Lemma~\ref{l:Bounded_moments}, where it is combined with a third moment bound for critical points (Theorem \ref{t:tmb}) to control the $(2+\eps)$ moments of $\Delta_v(D)$. If higher order moment bounds for critical points were available, then by adjusting the proof of Lemma~\ref{l:Bounded_moments} we could reduce the decay assumption on $\beta$. Specifically, if we knew that $\E[N_c(1)^k] < \infty$ for some integer $k \ge 4$, then we could replace \eqref{e:HolderBM} by
\begin{displaymath}
\E\left[\lvert\Delta_0(D)\rvert^{2+\eps}\right]\leq\sum_{u,v,w\in V}\prod_{i\in\{u,v,w\}}\E\left[\overline{N}_c\left(B_i\right)^k\right]^{\frac{2+\eps}{3k}}\P(i\in \U_0)^{\frac{k-2-\eps}{3k}},
\end{displaymath}
which would allow us to obtain the CLT for all $\beta>3kd/(k-2)$. Interestingly, even if all such moments were known to be finite, this would still only cover the regime $\beta > 3d$, and not the entire short-range correlated regime $\beta>d$.

After this manuscript was submitted for publication, higher order moment bounds for critical points were proven independently in \cite{gs23,al23}. Specifically it was shown that if $f$ is a stationary $C^{k+1}$ field with a continuous spectral density then $\E[N_c(1)^k]<\infty$. Hence if such a field satisfies Assumption~\ref{a:clt} for $\beta>3kd/(k-2)$ then the CLT holds for the component count.
\end{remark}

\subsection{Positivity of the limiting variance}
We now turn to proving Theorem \ref{t:posvar}. Again we work under Assumption~\ref{a:clt}, and take $\star \in \{\mathrm{ES},\mathrm{LS}\}$ and $\ell \in \R$ as fixed.

Let us briefly describe our strategy. Recall that the limiting variance $\sigma^2 = \sigma^2_\star(\ell)$ is given by \eqref{e:sigma}, whose expression involves conditioning on $\mathcal{F}_0$. The first step is to bound $\sigma$ from below by replacing the conditioning on $\mathcal{F}_0$ with conditioning on a single univariate Gaussian $Z$ corresponding to the mean of the white noise on a large box $D$. Then it is sufficient to show that the variance of the mean component count, as $Z$ varies, is strictly positive. In turn, it suffices to show that the mean component count is not constant when a drift (i.e.\ a change in the mean) is added to $Z$. On the other hand, as long as $\int q > 0$, adding such a drift has the effect of shifting the mean of the \textit{field} inside the large box, with boundary-order corrections. So since the component count density (i.e.\ the function $\mu_\star(\ell)$ in \eqref{e:lln}) is strictly positive and tends to zero as the level tends to infinity, provided the box and drift are chosen large enough, the drift necessarily has a non-zero effect on the mean component count, as required.

We now formalise this strategy. Let us begin with a variant of the stabilisation lemma proven above. Recall that $\widetilde{f}_0$ denotes the field $f$ with the white noise in $B_0$ resampled. For brevity we henceforth drop the level $\ell$ from the notation $N_\star(D,g,\ell)$.

\begin{lemma}
\label{l:stab2}
Let $w \in C^4(\R^d)$ be such that there exists $c > 0$ and $\gamma > 3d/2$ so that, for all $x$,
\begin{equation}
    \label{e:stab2con}
\max_{|\alpha| \le 2} |\partial^\alpha w(x)| \le c (1 + |x|)^{-\gamma}.
\end{equation}
Then there exists a random variable $D(w)$ such that, as $n \to \infty$
\[  N_\star(\Lambda_n, f + w ) -  N_\star(\Lambda_n, \widetilde{f}_0 ) \xrightarrow{a.s.} D(w) .  \]
Moreover,
\[ \E[ D(w) ] = \lim_{n \to \infty} \E[ N(\Lambda_n, f + w) ] - \E[ N(\Lambda_n, f ) ] , \]
and $\E[D(w(\cdot))] = \E[D(w(x + \cdot))]$ for any $x \in \R^d$.
\end{lemma}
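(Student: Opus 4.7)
My plan is to address the three assertions in turn, building on the stabilisation template of Lemma~\ref{l:Stabilisation} and the third moment bound of Theorem~\ref{t:tmb}. Throughout, write $Y_n := N_\star(\Lambda_n, f+w) - N_\star(\Lambda_n, \widetilde{f}_0)$ and introduce the unstable sets $\mathcal{U}^w := \bigcup_n \{v \in \Z^d : (f,w) \text{ is unstable on } \Lambda_n \cap B_v \text{ at level } \ell\}$ and $\mathcal{U}_0 := \bigcup_n \mathcal{U}_0(\Lambda_n)$.

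\emph{Almost sure convergence.} I would follow the proof of Lemma~\ref{l:Stabilisation} almost verbatim. The deterministic case of Lemma~\ref{l:quant_stab} combined with the decay of $w$ gives $\P(v \in \mathcal{U}^w) \le c(1+|v|)^{-\gamma(1-\eps)}$ for every $\eps>0$; since $\gamma > 3d/2 > d$ the right-hand side is summable, and a Borel--Cantelli argument (using well-spacedness of $\Lambda_n$) makes $\mathcal{U}^w$ finite almost surely. Finiteness of $\mathcal{U}_0$ follows identically from Lemma~\ref{l:Prob_unstable_decay}. Choose $R_1$ large enough to contain all cubes indexed by $\mathcal{U}^w \cup \mathcal{U}_0$, and $R_2 \ge R_1$ large enough that every bounded excursion (or level) component of $f$, $f+w$ or $\widetilde{f}_0$ meeting $\Lambda_{R_1}$ is contained in $\Lambda_{R_2}$. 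For $n$ with $\Lambda_{R_2} \subseteq \Lambda_n$, applying Lemma~\ref{l:Topological_stability} to $(f,w)$ and to $(f,p_0)$ on $\Lambda_n \setminus \Lambda_{R_1}$ yields the identification $Y_n = \overline{N}_\star(\Lambda_{R_1}, f+w) - \overline{N}_\star(\Lambda_{R_1}, \widetilde{f}_0) =: D(w)$, where $\overline{N}_\star$ counts bounded components.

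\emph{Expectation identity.} Here I would invoke dominated convergence using the envelope
\[ Z := \sum_{v \in \Z^d} \bigl(N_c(B_v, f) + N_c(B_v, f+w) + N_c(B_v, \widetilde{f}_0)\bigr) \ind_{v \in \mathcal{U}^w \cup \mathcal{U}_0}, \]
which dominates $|Y_n|$ via Lemma~\ref{l:stab3}. By H\"older and the third moment bound (Theorem~\ref{t:tmb} applied to $f$, to $f + w$ viewed as $f$ plus a deterministic $C^4$ perturbation, and to $\widetilde{f}_0$), each $\E[N_c(B_v, \cdot)^3]$ is bounded uniformly in $v$, so
\[ \E Z \le c \sum_v \P(v \in \mathcal{U}^w \cup \mathcal{U}_0)^{2/3} \le c \sum_v \bigl((1+|v|)^{-2\gamma(1-\eps)/3} + (1+|v|)^{-2(\beta-\delta)/3}\bigr) , \]
which is summable precisely because $\gamma > 3d/2$ (the $\beta$ term is automatic since $\beta > 9d$). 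Dominated convergence together with $\widetilde{f}_0 \stackrel{d}{=} f$ then yields $\E Y_n \to \E D(w) = \lim_n\bigl(\E N_\star(\Lambda_n, f+w) - \E N_\star(\Lambda_n, f)\bigr)$. The main obstacle is this sharp matching of the decay exponent $3d/2$ to the third moment bound on critical points; a weaker moment bound would force a strictly stronger decay hypothesis on $w$.

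\emph{Translation invariance.} For any $x \in \R^d$, the change of variables $y \mapsto y - x$ together with stationarity ($f(\cdot - x) \stackrel{d}{=} f$) gives $\E N_\star(\Lambda_n, f + w(\cdot + x)) = \E N_\star(\Lambda_n + x, f+w)$ and likewise $\E N_\star(\Lambda_n, f) = \E N_\star(\Lambda_n + x, f)$. Since $\Lambda_n + x$ is again a well-spaced sequence of box-domains converging to $\R^d$, the arguments of the previous two parts go through verbatim with this sequence in place of $\Lambda_n$, and the stabilised limit is still $D(w)$ since (as in Lemma~\ref{l:Stabilisation}) the pointwise value $\overline{N}_\star(\Lambda_{R_1}, f+w) - \overline{N}_\star(\Lambda_{R_1}, \widetilde{f}_0)$ does not depend on which well-spaced sequence is used. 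Therefore
\[ \E D(w(x + \cdot)) = \lim_n \bigl(\E N_\star(\Lambda_n + x, f+w) - \E N_\star(\Lambda_n + x, f)\bigr) = \E D(w) , \]
as required.
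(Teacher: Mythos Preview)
Your proof is correct and follows essentially the same template as the paper: a Borel--Cantelli finiteness argument for the unstable set to get almost sure stabilisation, followed by a H\"older/third-moment dominated convergence argument for the expectation, and a stationarity/well-spacedness argument for translation invariance. The only notable difference is organisational: the paper works with a \emph{single} unstable set $\mathcal{U}_1 = \{v : (\widetilde{f}_0, w - p_0)\text{ is unstable on }B_v\}$, comparing $f+w$ and $\widetilde{f}_0$ directly via the Gaussian perturbation $w-p_0$ (and invoking the random-perturbation case of Lemma~\ref{l:quant_stab}), whereas you factor through the intermediate field $f$ and use two unstable sets $\mathcal{U}^w$ and $\mathcal{U}_0$. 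Your route has the mild advantage of only needing the deterministic case of Lemma~\ref{l:quant_stab} for $w$ and reusing Lemma~\ref{l:Prob_unstable_decay} verbatim for $p_0$; the paper's route needs one fewer critical-point term in the envelope. Both lead to the same summability threshold $\gamma > 3d/2$.
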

\begin{remark}
In particular $w = q \ast \id_{D}$ satisfies \eqref{e:stab2con} for any compact $D$. 
\end{remark}
\begin{proof}
Recall that $p_0 = \widetilde{f}_0 - f$. Define the (random) unstable subset
\[ \U_1 = \{v \in \Z^d : (\widetilde{f}_0,w-p_0) \text{ is unstable on $B_v$} \}. \]
By Lemma~\ref{l:quant_stab}, for every $\eps > 0$ there are $c_\eps,c > 0$ such that 
\[ \P\left(v\in \U_1\right)  \le  c_\eps \inf_{\tau > \|w\|_{C^1(v + \Lambda_2)} + c \sqrt{M_v}} \Big( \tau^{1-\eps} +  e^{-(\tau- c \sqrt{M_v} - \|w\|_{C^1(v + \Lambda_2)} )^2 /  (2 M_v ) } \Big),\]
where, as shown in the proof of Lemma~\ref{l:Prob_unstable_decay}, $M_v\leq c_1(1+\lvert v\rvert)^{-2\beta}$. For some $\eta>0$, we now choose
\begin{displaymath}
\tau= c_2(1 + \lvert v\rvert)^{-\min\{\beta,\gamma\}+\eta} \geq 2 ( c\sqrt{M_v} + \|w\|_{C^1(v + \Lambda_2)})
\end{displaymath}
where the inequality holds for an appropriate choice of constant $c_2>0$. We then have
\begin{displaymath}
 \P(v\in \U_1) \le c_3 \Big( ( 1 + \lvert  v\rvert)^{-(\min\{\beta,\gamma\}-\eta)(1-\eps)}+e^{-c_4( 1 + \lvert  v\rvert)^{2\eta}} \Big)
\end{displaymath}
for some $c_3, c_4 > 0$ depending on $\epsilon$. Choosing $\eta,\eps>0$ small enough we can ensure that this expression is bounded by $c_\delta (1 + \lvert  v\rvert)^{-\min\{\beta,\gamma\}+\delta}$ for any $\delta>0$. Since this expression is summable over $v \in \Z^d$, arguing as in the proof of Lemma \ref{l:Stabilisation}, we see that
\[ N_\star(\Lambda_n, f + w )  -  N_\star(\Lambda_n, \widetilde{f}_0 )\]
converges almost surely as $n\to\infty$, proving the first statement.

Turning to the second statement, by Lemma~\ref{l:stab3} (applied to $g = \widetilde{f}_0$ and $p = w-p_0$)
\[ |N_\star(\Lambda_n, f + w ) - N_\star(\Lambda_n, \widetilde{f}_0 )  | \le \sum_{v \in \Z^d} ( N_c(B_v,f+w) + N_c(B_v,\widetilde{f}_0)  ) \id_{ v \in \U_1}. \]
Then by H\"older's inequality, Theorem~\ref{t:tmb}, and the fact that $\min\{\beta,\gamma\}>3d/2$,
\begin{align*}
& \E\bigg[\sum_{v \in \Z^d} ( N_c(B_v,f+w) + N_c(B_v,\widetilde{f}_0)  ) \id_{ v \in \U_1 }\bigg] \\
& \qquad \qquad \le \sum_{v \in \Lambda_n} \E[ ( N_c(B_v,f+w) + N_c(B_v,\widetilde{f}_0)  )^3]^{1/3}\P(v \in \U_1)^{2/3}\\
& \qquad \qquad \le c\sum_{v\in \Z^d}(1+\lvert v\rvert)^{-\frac{2}{3}(\min\{\beta,\gamma\}+\delta)}<\infty
\end{align*}
for $c  > 0$ depending only on $f$ and $\|w\|_{C^4(\R^d)}$, where we have taken $\delta>0$ sufficiently small to ensure that the sum is finite. Thus $| N_\star(\Lambda_n, f + w , \ell) - N_\star(\Lambda_n, \widetilde{f}_0 , \ell)|$ is dominated by a quantity with finite expectation, so by the dominated convergence theorem and equality in law of $\widetilde{f}_0$ and $f$,
\[ \E[ D(w) ] =  \lim_{n \to \infty} \E[ N(\Lambda_n, f + w) -  N(\Lambda_n, \widetilde{f}_0 ) ] =  \lim_{n \to \infty} \E[ N(\Lambda_n, f + w) ] - \E[ N(\Lambda_n, f ) ] , \]
as required. The final claim follows by stationarity, since the argument for the existence of $D(w)$ in the proof of Lemma~\ref{l:Stabilisation} shows that $D(w)$ is unchanged if $\Lambda_n$ is replaced by $\Lambda_n - x$.
\end{proof}

Let us next give a sufficient condition for $\sigma > 0$. Later we will verify (a rescaled version of) this sufficient condition under the additional assumption that $\int q > 0$.

\begin{lemma}
\label{l:sufcon}
Suppose there exists a set $I \subset \R$ of positive measure such that, for all $s \in I$,
\begin{equation}
    \label{e:cond}
 \lim_{n \to \infty} \E[ N_\star(\Lambda_n, f + s (q \ast \id_{B_0} ) , \ell) ] - \E[ N_\star(\Lambda_n, f , \ell) ] < 0 .
 \end{equation}
Then $\sigma  > 0$.
\end{lemma}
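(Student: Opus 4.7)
Following the strategy sketched before the statement, the plan is to lower-bound $\sigma^2$ by coarsening the conditioning in \eqref{e:sigma} from $\mathcal{F}_0$ to the single standard Gaussian $Z := W(B_0)$, which is $\mathcal{F}_0$-measurable by construction of the filtration. The conditional Jensen inequality then gives
\[ \sigma^2 \geq \E\big[\E[\Delta_0(\R^d) \mid Z]^2\big], \]
and the goal is to identify the right-hand side with the variance of an explicit deterministic function of $Z$ directly linked to the quantity appearing in \eqref{e:cond}.

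To do so, I would orthogonally split the white noise as $W = Z\id_{B_0} + W''$ with $W'' \perp Z$, yielding $f = f_0 + Z g$ where $g := q\star\id_{B_0}$ and $f_0 := q\star W''$ is independent of $Z$. Since the resampled field $\widetilde f_0$ is built from data independent of $Z$ (and has the same law as $f$), the conditional expectation collapses to
\[ \E[\Delta_0(\Lambda_n) \mid Z = s] = \Phi_n(s) - \psi_n(0), \]
where $\Phi_n(s) := \E[N_\star(\Lambda_n, f_0 + s g)]$ and $\psi_n(t) := \E[N_\star(\Lambda_n, f + t g)]$. Combining the almost-sure stabilisation $\Delta_0(\Lambda_n) \to \Delta_0(\R^d)$ from Lemma~\ref{l:Stabilisation} (applied to the well-spaced sequence $\Lambda_n$) with the uniform $(2+\eps)$-moment bound from Lemma~\ref{l:Bounded_moments} upgrades this to $L^2$-convergence, and hence $\sigma^2 \geq \lim_{n \to \infty} \Var(\Phi_n(Z))$.

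The heart of the argument is then a quantitative comparison between $\Var(\Phi_n(Z))$ and $\psi_n(s) - \psi_n(0)$. Integrating out $Z$ in $f = f_0 + Z g$ yields $\psi_n(t) = \int \Phi_n(u)\,\phi(u - t)\,du$, where $\phi$ is the standard normal density; since $\psi_n(0) = \E[\Phi_n(Z)]$, one can rewrite
\[ \psi_n(s) - \psi_n(0) = \int \big(\Phi_n(u) - \psi_n(0)\big)\,\phi(u - s)\,du. \]
Applying Cauchy--Schwarz with weight $\phi(u)\,du$ together with the elementary identity $\int \phi(u - s)^2/\phi(u)\,du = e^{s^2}$ gives $(\psi_n(s) - \psi_n(0))^2 \leq e^{s^2}\Var(\Phi_n(Z))$. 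Fixing any $s_0 \in I$ (which is nonempty by the positive-measure assumption), the hypothesis forces $\lim_n(\psi_n(s_0) - \psi_n(0)) < 0$, so the displays combine to give
\[ \sigma^2 \geq e^{-s_0^2}\big(\lim_n(\psi_n(s_0) - \psi_n(0))\big)^2 > 0. \]

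I expect the main technical obstacle to be the passage from the finite-volume $\Var(\Phi_n(Z))$ to the limiting $\sigma^2$: this rests on $L^2$-convergence of $\Delta_0(\Lambda_n)$, which in turn requires both the stabilisation result and the uniform $(2+\eps)$-moment bound---both fortunately already in hand. The remaining ingredients, namely the Gaussian splitting of $W$ along $\id_{B_0}$ and the Cauchy--Schwarz identity, are essentially routine. Note that only a single $s_0 \in I$ is actually used, so the positive-measure hypothesis on $I$ is more than enough for the conclusion.
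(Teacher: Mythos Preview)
Your argument is correct, and it reaches the same conclusion as the paper via a genuinely different route. The paper works directly with the limiting object: it defines $F(z)=\E[\Delta_0(\R^d)\mid Z_0=z]$, observes the identity $\E[F(Z_0+s)]=\E[D(s(q\star\id_{B_0}))]$ (which by Lemma~\ref{l:stab2} equals the quantity in \eqref{e:cond}), and then argues qualitatively that if $\Var[F(Z_0)]=0$ then $F\equiv 0$ a.e., forcing $\E[F(Z_0+s)]=0$ for every $s$ and contradicting the hypothesis. By contrast, you stay at finite volume, pass to the limit using the $L^2$ convergence of $\Delta_0(\Lambda_n)$ (justified, as you note, by a.s.\ stabilisation plus uniform $(2+\eps)$-moments), and replace the non-constancy argument with the Cauchy--Schwarz/Gaussian identity $\int\phi(u-s)^2/\phi(u)\,du=e^{s^2}$. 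Your route has the advantage of producing an explicit quantitative lower bound $\sigma^2\ge e^{-s_0^2}\big(\lim_n(\psi_n(s_0)-\psi_n(0))\big)^2$, which the paper's soft argument does not. The paper's route is slightly shorter because it never needs to track convergence of $\Var(\Phi_n(Z))$. Your observation that a single $s_0\in I$ suffices is also correct, and in fact applies equally to the paper's argument.
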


\begin{proof}
Consider an orthogonal decomposition of the white noise $W|_{B_0}$ into the Gaussian function $Z_0 \id(\cdot)|_{B_0}$ and an orthogonal part, where $Z_0$ is a standard normal random variable, and observe that $Z_0$ is measurable with respect to $\mathcal{F}_0$. Define the function
\[ F(z) = \E\big[ \Delta_0(\R^d) \; \big| \; Z_0 = z \big] .\]
Then by Jensen's inequality
\[ \sigma^2 = \E\left[\E[\Delta_0(\R^d)\;|\;\mathcal{F}_0]^2\right] \ge \Var[F(Z_0)] . \]
On the other hand, for every $s \in \R$, by definition
\[ \E[F(Z_0 + s)] = \E[D(s(q \ast \id_{B_0})] . \]
Hence, by Lemma \ref{l:stab2} and \eqref{e:cond}, $\E[F(Z_0+s)] < 0$ on a set $s \in I$ of positive measure, which implies that $\Var[F(Z_0)] > 0$ as required.
\end{proof}

Towards verifying \eqref{e:cond}, let us collect some facts about the component count density $\mu = \mu_\star(\ell)$ in \eqref{e:lln}:
\begin{lemma}\label{l:MeanFunctional}
The following hold:
\begin{enumerate}
    \item For all $\ell\in\R$, $\mu_\star(\ell)>0$.
    \item As $\ell \to \infty$, $\mu_\star(\ell)\to 0$.
    \item For all $\ell\in\R$, $\E[N_\star(\Lambda_n,f,\ell)] / n^d \to \mu_\star(\ell)$ as $n\to\infty$.
\end{enumerate}
\end{lemma}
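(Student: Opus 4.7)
The plan is to prove the three claims in sequence: part~(3) follows from the $L^1$ law of large numbers, part~(1) from a local non-degeneracy argument, and part~(2) from a Kac-Rice bound on critical points combined with a Morse-theoretic reduction for level sets.

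Part~(3) is essentially immediate from \eqref{e:lln}: since $N_\star(\Lambda_n,f,\ell)/\mathrm{Vol}(\Lambda_n)$ converges to the constant $\mu_\star(\ell)$ in $L^1$, its expectation converges to the same limit, and (3) follows modulo the volume normalisation $\mathrm{Vol}(\Lambda_n) \asymp n^d$.

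For part~(1), I would consider the event
\[ A = \{f(0) > \ell+1\} \cap \Big\{\sup_{x \in \partial B_0} f(x) < \ell-1\Big\} ; \]
on $A$, the connected component of $\{f\ge\ell\}$ through the origin is bounded and strictly contained in the interior of $B_0$ (since it cannot reach $\partial B_0$ where $f < \ell$), and consequently its boundary is also a bounded component of $\{f=\ell\}$ strictly inside $B_0$. Hence the same event works for both $\star\in\{ES,LS\}$. Under Assumption~\ref{a:clt} the spectral density $|\mathcal{F}[q]|^2$ is non-zero on an open set, so by the standard theory of Gaussian measures the law of $f|_{B_0 \cup \partial B_0}$ has full topological support in the space of continuous functions; in particular $A$ is a non-empty open event with $\P(A)>0$. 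Applying stationarity to translates of $A$ by $v \in \Lambda_{n-1}\cap\Z^d$ (which give rise to distinct components, since the cubes $B_v^\circ$ are pairwise disjoint) and combining with (3) gives $\mu_\star(\ell) \ge \P(A) > 0$.

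For part~(2), the main step is to show that every bounded $\star$-component of $f$ at level $\ell$ contained in $\Lambda_R$ can be charged injectively to a critical point of $f$ with value $\ge \ell$, up to a multiplicative constant depending only on $d$. For $\star=ES$ this is elementary: each bounded component contains at least one local maximum of $f$ with value $\ge\ell$, and distinct components contain distinct maxima. For $\star=LS$, each bounded level component encloses a bounded region in which either $f>\ell$ (and then it contains a local maximum with value $>\ell$) or $f<\ell$ (a ``hole-type'' component), and the hole-type components correspond to holes in some component of $\{f\ge\ell\}$, which by the Morse inequalities are accounted for by saddle-type critical points of $f$ with value $\ge\ell$. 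By the Kac-Rice formula and Cauchy-Schwarz,
\[ \frac{\E\bigl[N_c^{\ge\ell}(\Lambda_R,f)\bigr]}{\mathrm{Vol}(\Lambda_R)} \le c \cdot \P\bigl[f(0)\ge\ell \bigm| \nabla f(0)=0\bigr]^{1/2} \xrightarrow{\ell\to\infty} 0 , \]
where $N_c^{\ge\ell}$ denotes the number of critical points of $f$ in the relevant domain with value $\ge\ell$; combined with (3), this gives $\mu_\star(\ell)\to 0$.

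The main technical obstacle is the Morse-theoretic reduction for hole-type LS components. In $d=2$ this follows from the identity $b_0(\{f\ge\ell\})-b_1(\{f\ge\ell\})=\chi(\{f\ge\ell\})$ combined with the Morse expression of $\chi$ as an alternating sum of critical point counts of $f$ with value $\ge\ell$; in higher dimensions one applies the full Morse inequalities to $\{f\ge\ell\}$, with some attention to boundary effects at $\partial\Lambda_R$. The remaining ingredients --- the LLN, topological support of the Gaussian measure, and Kac-Rice --- are classical.
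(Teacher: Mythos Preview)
Your proof is correct, and parts~(1) and~(3) match the paper's approach closely: the paper simply cites \cite[Appendix~C.2, Theorem~1]{ns16} for~(1), which is the same topological-support idea you spell out, and quotes the $L^1$ law of large numbers for~(3).

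For part~(2) you take a genuinely more elaborate route than the paper. The paper argues directly that $N_\star(\Lambda_n,f,\ell)$ is bounded by the number of \emph{stratified} critical points of $f$ in $\Lambda_n$ with level at least $\ell$; for the LS case it uses (implicitly) the Morse-theoretic fact that as $\ell'$ decreases from $+\infty$ to $\ell$, passing each stratified critical point changes the number of level-set components by at most one, so no separate treatment of ``hole-type'' components is needed. Boundary effects are absorbed into the stratified count, and since boundary strata contribute $O(n^{d-1})$ in expectation they wash out in the limit, yielding $\mu_\star(\ell)\le \E[N_c(\Lambda_1,f,[\ell,\infty))]$. For the final step the paper uses dominated convergence: $N_c(\Lambda_1,f,[\ell,\infty))\to 0$ almost surely and is dominated by $N_c(\Lambda_1,f)$, which has finite expectation. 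Your Kac--Rice plus Cauchy--Schwarz argument also works but is heavier machinery for the same conclusion. Your explicit splitting into excursion-type and hole-type components, handled via $b_0+b_1$ and Morse inequalities, is fine in $d=2$; in higher dimensions your sketch (``apply the full Morse inequalities'') is less precise than the level-set-surgery argument above, since the number of boundary components of $\{f\ge\ell\}$ is not directly one of the Betti numbers $b_k(\{f\ge\ell\})$. The stratified-critical-point route avoids this bookkeeping entirely.
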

\begin{proof}
\textbf{(1).} By Lemma \ref{l:wnr}, the support of the spectral measure of $f$ contains an open set. By \cite[Appendix~C.2 and Theorem~1]{ns16}, this ensures that $\mu_\mathrm{LS}(0)>0$, and the proof of this result applies equally well to all $\ell\in\R$ for both excursion and level sets.

\textbf{(2).} We first claim that
\begin{equation}
\label{e:highell}
\mu_\star(\ell)\leq\E[N_c(\Lambda_1,f,[\ell,\infty))],
\end{equation}
where $N_c(\Lambda_1,f,[\ell,\infty))$ denotes the number of critical points of $f$ in $\Lambda_1$ with level at least~$\ell$. To see this, observe that $N_\star(\Lambda_n,f,\ell)$ is bounded by the number of stratified critical points of $f$ in $\Lambda_n$ with height at least $\ell$. This is a standard consequence of Morse theory: when raising the level, the topology of the excursion/level set can only change upon passing through a stratified critical point and for each such point the component count can change by at most one. Moreover when the level is sufficiently high, the excursion/level set is empty so we must have passed through at least one critical point for each component. (See \cite{gp88} for a comprehensive background on the Morse theory of stratified spaces or \cite{han02} for a concise introduction.) Since the expected number of stratified critical points in a boundary stratum of $\Lambda_n$ scales like $n^{k}$ where $k\in\{0,1,\dots,d-1\}$ is the dimension of the stratum, by the definition of $\mu_\star$ and stationarity of critical points we have \eqref{e:highell}. To finish the proof we claim that
\[ \E[N_c(\Lambda_1,f,[\ell,\infty))] \to 0 \]
as $\ell \to \infty$. Indeed $ N_c(\Lambda_1,f,[\ell,\infty) ) \to 0$ almost surely, and ${N_c(\Lambda_1,f,[\ell,\infty) ) \le N_c(\Lambda_1, f)}$ which has finite expectation. Then the claim follows from the dominated convergence theorem.

\textbf{(3).} This follows immediately from the $L^1$ convergence in \eqref{e:lln}.
\end{proof}

We can now finish the proof of Theorem \ref{t:posvar}:

\begin{proof}[Proof of Theorem~\ref{t:posvar}]
Observe that one can run the proof of the CLT by decomposing the white noise $W$ over any lattice $m \Z^d$, $m \in \N$, in place of $\Z^d$, and the limiting variance must be the same. Hence by Lemma~\ref{l:sufcon} it is enough to find a sufficiently large $m \in N$ and a set $I$ of positive measure such that, for all $s \in I$,
\[ \lim_{n \to \infty} \E[ N_\star(\Lambda_n , f + s (q \ast (2m B_0)  ) ) ] - \E[ N_\star(\Lambda_n, f ) ] < 0 .\]
Since $\int_{\R^d}q(x)\;dx>0$ by assumption, it is sufficient for us to verify the above condition with $\overline{q}:=q/\int_{\R^d}q(x)dx$ replacing $q$. Moreover, by stationarity we may replace $2m B_0$ with $\Lambda_m$.

By the first two points of Lemma~\ref{l:MeanFunctional} there exists an open interval $I$ and $\eta>0$ such that $\mu_\star(\ell-s)-\mu_\star(\ell)<-7\eta$ for all $s\in I$. Henceforth we fix such an $s\in I$. By the third point of Lemma~\ref{l:MeanFunctional}, as $k \to \infty$,
\begin{displaymath}
    \E[N_\star(\Lambda_k,f+s)] / k^d \to \mu_\star(\ell-s)\quad\text{and}\quad\E[N_\star(\Lambda_k,f)] / k^d \to \mu_\star(\ell).
\end{displaymath}
Combining these observations, for $k$ sufficiently large, 
\begin{equation}\label{e:Pos1}
\E[N_\star(\Lambda_k,f+s)]-\E[N_\star(\Lambda_k,f)]<-6\eta k^d .
\end{equation}

We now state three claims which will be combined to prove the theorem. We let $w:=w_m:=s\overline{q}\star\ind_{\Lambda_m}$, and observe that $\|w\|_{C^4(\R^d)} \le \lvert s \rvert \max_{|\alpha| \le 4} \|\partial^\alpha \overline{q}\|_{L^1(\R^d)}$ is uniformly bounded over $m$. Set $k=m-\lceil\sqrt{m}\rceil$ (although from the proof it will be apparent that we could choose $k=m-r$ for any $1\ll r\ll m$). We claim that for some $m$ sufficiently large and all $n>m$,
\begin{align}
\label{e:Pos2}    \E[\lvert N_\star(\Lambda_n,g)-N_\star(\Lambda_k,g)-N_\star(\Lambda_n\setminus\Lambda_k,g)\rvert]&\leq\eta k^{d-1} \le \eta m^d \quad\text{for }g=f,f+w,\\
\label{e:Pos3}    \E[\lvert N_\star(\Lambda_k,f+w)-N_\star(\Lambda_k,f+s)\rvert]&\leq\eta m^d,\\
\label{e:Pos4}    \E[\lvert N_\star(\Lambda_n\setminus\Lambda_k,f+w)-N_\star(\Lambda_n\setminus\Lambda_k,f)\rvert]&\leq\eta m^d.
\end{align}

To explain the intuition behind \eqref{e:Pos2}--\eqref{e:Pos4}, recall that components inside $\Lambda_n$ are either  inside $\Lambda_k$, inside $\Lambda_n \setminus \Lambda_k$, or hit the boundary of $\Lambda_k$. Roughly speaking, \eqref{e:Pos2} means that the expected number of components of the third type is negligible (compared to the volume of $\Lambda_m$), \eqref{e:Pos3} means that $w$ is almost a constant on $\Lambda_k$, so the expected number of components of the first type is indifferent as to whether we perturb by $w$ or by a constant, and \eqref{e:Pos4} means that perturbation by $w$ has negligible effect on components of the second type.

Combining these equations with \eqref{e:Pos1}, the triangle inequality, and the fact that $k/m\to 1$ immediately gives for $m$ sufficiently large, and all $n>m$,
\begin{displaymath}
\E[ N_\star(\Lambda_n , f + s (\bar q \ast \id_{\Lambda_m} ) ) ] - \E[ N_\star(\Lambda_n, f ) ] < -\eta m^d,
\end{displaymath}
completing the proof of the theorem.

It remains to verify claims \eqref{e:Pos2}--\eqref{e:Pos4}. In the sequel $c' > 0$ will denote a constant that depends only on $f$ and $s$ and may change from line to line. Observe that each component of $g = f, f+w$ inside $\Lambda_n$ must either be contained in $\Lambda_k$, be contained in $\Lambda_n\setminus\Lambda_k$ or intersect $\partial\Lambda_k$. The number of components intersecting $\partial\Lambda_k$ is dominated by the number of critical points of $g$ restricted to the boundary, therefore
\begin{displaymath}
\lvert N_\star(\Lambda_n,g)-N_\star(\Lambda_k,g)-N_\star(\Lambda_n\setminus\Lambda_k,g)\rvert\leq N_c(\partial\Lambda_k,g).
\end{displaymath}
 By Jensen's inequality applied to Theorem~\ref{t:tmb}, the expectation of the right hand side here is at most $c'k^{d-1}$, verifying \eqref{e:Pos2}.

As in the proof of Lemma \ref{l:stab2}, by Lemma~\ref{l:stab3}, H\"older's inequality and Theorem~\ref{t:tmb},
\begin{align}
\nonumber \E[\lvert N_\star(\Lambda_k,f+w)-&N_\star(\Lambda_k,f+s)\rvert]\\
\nonumber&\leq\sum_{v\in\Z^d\cap\Lambda_k}\E[(N_{c}(B_v,f+s) + N_c(B_v,f+w))^3]^\frac{1}{3}\P(v\in \U_2)^\frac{2}{3}\\
   \label{e:Pos5}  &\leq c'\sum_{v\in\Z^d\cap\Lambda_k}\P(v\in \U_2)^\frac{2}{3}\leq c'm^d\sup_{v\in\Z^d\cap\Lambda_k}\P(v\in \U_2)^\frac{2}{3},
\end{align}
where 
\begin{displaymath}
\U_2:=\{v \in \Z^d : (f, w-s) \text{ is unstable on $B_v$ at level }\ell-s\}.
\end{displaymath}
We claim that
\begin{displaymath}
\int_{\R^d}\overline{q}(u)\;du=1 \quad\text{and}\quad\int_{\R^d}\partial^\alpha \overline{q}(u)\;du=0\quad\text{for }\lvert\alpha\rvert=1.
\end{displaymath}
The first property just follows from $\overline{q}$ being defined as the normalisation of $q$ whilst the second follows from the decay of $q$. For $x\in\Lambda_k$ and $\lvert\alpha\rvert\leq 1$ we then have
\begin{align*}
\lvert \partial^\alpha(w(x)-s)\rvert=\lvert s\rvert\bigg\lvert \int_{\Lambda_m}\partial^\alpha\overline{q}(x-u)du-\int_{\R^d}\partial^\alpha\overline{q}(x-u)\;du\bigg\rvert&\leq \lvert s\rvert\int_{\R^d\setminus\Lambda_{\sqrt{m}}}\lvert\partial^\alpha\overline{q}(u)\rvert du\\
&\leq c'm^{-(\beta-d)/2}.
\end{align*}
Hence $\|w-s\|_{C^1(v + \Lambda_2)}\leq c' m^{-(\beta-d)/2}$ for $v\in\Lambda_k$, and so by Lemma~\ref{l:quant_stab}, for every $\epsilon>0$ we have $\P(v\in \U_2)\leq c_\epsilon m^{-(1-\epsilon)(\beta-d)/2}$. Since $\beta>d$, combining this with \eqref{e:Pos5} proves \eqref{e:Pos3}.

Repeating the arguments to justify \eqref{e:Pos5} shows that
\begin{equation}\label{e:Pos6}
\begin{aligned}
\E[\lvert N_\star(\Lambda_n\setminus\Lambda_k,f+w)-N_\star(\Lambda_n\setminus\Lambda_k,f)\rvert]&\leq c' \sum_{v\in\Z^d\cap(\Lambda_n\setminus\Lambda_k)}\P(v\in \U_3)^\frac{2}{3}
\end{aligned}
\end{equation}
where 
\begin{displaymath}
\U_3:=\{v \in \Z^d : (f, w) \text{ is unstable on $B_v$ at level }\ell\}.
\end{displaymath}
For $x\in\R^d$ and a multi-index $\alpha$ such that $\lvert\alpha\rvert\leq 1$
\begin{align*}
    \lvert \partial^\alpha w(x)\rvert&=\Big\lvert  \int_{\Lambda_m}\partial^\alpha\overline{q}(x-u)\;du\Big\rvert\leq \int_{\lvert u\rvert>\mathrm{dist}(x,\Lambda_m)}\lvert\partial^\alpha\overline{q}(u)\rvert\;du\leq c' (1+\dist(x,\Lambda_m))^{-\beta+d}.
\end{align*}
Therefore by Lemma~\ref{l:quant_stab}
\begin{displaymath}
\P(v\in  \U_3)\leq c_\epsilon(1+\mathrm{dist}(v,\Lambda_m))^{-(1-\epsilon)(\beta-d)}
\end{displaymath}
for any $\epsilon>0$. Hence by Lemma~\ref{l:ElementarySum}, \eqref{e:Pos6} is bounded above by
\begin{align*}
    \sum_{v\in\Z^d\setminus\Lambda_{m-\sqrt{m}}}\P(v\in \U_3)^{2/3}&\leq c' m^{d-\frac{1}{2}}+\sum_{v\in\Z^d\setminus\Lambda_{m+\sqrt{m}}}c_\epsilon(1+\mathrm{dist}(v,\Lambda_m))^{-\frac{2(1-\epsilon)}{3}(\beta-d)}\\
    &\leq c' m^{d-\frac{1}{2}}+c\big(m^{-\frac{(1-\epsilon)}{3}(\beta-d)+d/2}+m^{d-1-\frac{(1-\epsilon)}{3}(\beta-d)+1/2}\big)\\
    &\leq c' m^{d-\frac{1}{2}}
\end{align*}
for $\epsilon>0$ sufficiently small, since $\beta>5d/2$. Combined with \eqref{e:Pos6} this verifies \eqref{e:Pos4}, and thus completes the proof of the theorem.
\end{proof}

\begin{remark}
It is interesting to note that the proof of Theorem \ref{t:posvar} only requires that Assumption \ref{a:clt} holds for $\beta > 5d/2$ (taking \eqref{e:sigma} as the definition of $\sigma$). Moreover if one knew that the conclusion of Theorem~\ref{t:tmb} (including the uniformity over $\|p\|\leq\tau$) held for all higher order moments then the above proof would require only $\beta>d$. This suggests that a similar strategy could show that $\Var[N_\star(R,\ell)]$ is of at least volume order for all $\beta > d$.
\end{remark}

\medskip

\section{Analysis of critical points}
\label{s:dbcp}

In this section we prove the third moment bound on critical points in Theorem \ref{t:tmb}. The proof exploits the `divided difference' method, which originated in \cite{bel66, cuz75} for studying zeros of one-dimensional Gaussian processes and was used recently in \cite{al21} to prove quite comprehensive results in this setting. We introduce this technique in Section \ref{s:introdd}. In principle our analysis could be extended to moments of higher order, although this would require more technical arguments (and stronger assumptions on the field).

\subsection{Reduction to a bound on the three-point intensity of critical points}
We first reduce the proof of Theorem \ref{t:tmb} to a pointwise bound on critical point intensities.

Henceforth we assume that $f$ satisfies Assumption~\ref{a:gen} and $p \in C^4(\R^d)$. We abbreviate $F=f+p$, and recall that $N_c(R)=N_c([-R,R]^d)$ denotes the number of critical points of $F$ in $\Lambda_R = [-R,R]^d$. By the Kac-Rice theorem (see \cite[Chapter~11]{at07} or \cite[Chapter~6]{aw09} for background), for every $R > 0$
\begin{equation}
    \label{e:krtp}
  \mathbb{E}[N_c(R)(N_c(R)-1)(N_c(R)-2)]=\int_{x,y,z\in \Lambda_R} J(x,y,z) \, dxdydz ,  
  \end{equation} 
where $J(x,y,z)$ is the three-point intensity function
\[ J(x,y,z) = \varphi(x,y,z) \times \tilde{\E} \big[  \big| \det ( \nabla^2 F(x)  \nabla^2 F(y) \nabla^2 F(z) ) \big| \big] \]
and $\widetilde{\E}$ denotes the expectation under the conditioning
\begin{equation}
    \label{e:cond5}
\big(\nabla F(x), \nabla F(y), \nabla F(z) \big) = (0,0,0) , 
\end{equation}
with $\varphi(x,y,z)$ the corresponding Gaussian density. This formula is valid since $f$ is $C^2$-smooth and $(\nabla f(x), \nabla f(y), \nabla f(z))$ is non-degenerate for distinct $x,y,z$ (see Lemma \ref{l:nondegen}).

A priori $J(x,y,z)$ may diverge on the diagonal (i.e.\ as $x,y \to z$ or $x-y \to 0$ etc.), but the following bound provides sufficient integrability. We define
\begin{displaymath}
\D=\{(x,y)\in\R^{2d}\;|\;0<\lvert x\rvert<\lvert y\rvert<\lvert x-y\rvert\leq 1\}.
\end{displaymath}

\begin{proposition}
\label{p:tpint}
There exists $c > 0$, depending only on $f$, such that, for all $(x,y)\in\D$,
\[  J(x,y,0)  \le c(1 + \|p\|_{C^4(\R^d)})^{3d} |x|^{-d+1} |y|^{-d+1} (|y|+\sin\theta)^{-d+1}(\sqrt{|y|}+\sin\theta) , \]
where $\theta\in[\pi/3,\pi]$ denotes the angle between $x$ and $y$.
\end{proposition}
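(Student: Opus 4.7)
The approach is the \emph{divided-difference method}, introduced in dimension one by Belyaev~\cite{bel66} and Cuzick~\cite{cuz75} and adapted here to general $d\ge 2$. The joint Gaussian vector $(\nabla F(0),\nabla F(x),\nabla F(y))$ becomes highly degenerate when the three points coalesce, so naive estimates on $\varphi(x,y,0)$ and on the conditional expectation of the product of Hessian determinants are too crude. Instead, one replaces this vector by a linear transformation whose resulting Gaussian triple admits a non-degenerate limit as $x,y\to 0$.

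First I would fix orthonormal coordinates with $u=x/|x|$ and $w\perp u$ a unit vector in the plane of $x,y$, writing $y=au+bw$ with $b=|y|\sin\theta$. Then I would perform the change of variables
\[
V_0=\nabla F(0),\qquad V_1=\frac{\nabla F(x)-\nabla F(0)}{|x|},\qquad V_2=L\bigl(\nabla F(y)-\nabla F(0)-aV_1\bigr),
\]
where $L$ is an anisotropic $d\times d$ rescaling, diagonal in the basis $(u,w,\dots)$. A Taylor expansion shows that the numerator of $V_2$ is of order $b\,\nabla^2 F(0)w$ (i.e.\ of order $|y|\sin\theta$) in the $d-1$ directions orthogonal to $u$, with Taylor remainder of order $|y|^2$; along $u$, the leading $b$-order term is redundant with a component of $V_1$, and after cancelling this redundancy one is left with a genuine second-order divided difference of order $|y|^2$. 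Accordingly, $L$ is chosen to scale the perpendicular directions by $(|y|+\sin\theta)^{-1}$ and the $u$-direction by $(\sqrt{|y|}+\sin\theta)^{-1}$, the latter interpolating between the regimes $\sin\theta\gtrsim\sqrt{|y|}$ and $\sin\theta\lesssim\sqrt{|y|}$. With these choices the limiting Gaussian vector of $(V_0,V_1,V_2)$ as $x,y\to 0$ is a linear functional of $(\nabla f(0),\nabla^2 f(0),\partial_v^2\partial_w f(0),\partial_v\partial_w^2 f(0))$, and its non-degeneracy follows from Assumption~\ref{a:gen} via Remark~\ref{r:nondegen}(4).

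Since the change-of-variables matrix $T$ is block lower-triangular, $|\det T|^{-1}=|x|^d(|y|+\sin\theta)^{d-1}(\sqrt{|y|}+\sin\theta)$, and combined with a uniform Gaussian density bound at the origin this yields
\[
\varphi(x,y,0)\le c\,|x|^{-d}(|y|+\sin\theta)^{-(d-1)}(\sqrt{|y|}+\sin\theta)^{-1}.
\]
For the conditional expectation $\tilde E[|\det \nabla^2 F(0)\det \nabla^2 F(x)\det \nabla^2 F(y)|]$, under the conditioning $V_0=V_1=V_2=0$ the Hessians remain jointly Gaussian with conditional means bounded by $C(1+\|p\|_{C^4})$ and uniformly bounded conditional variances; moreover, the conditioning forces certain columns to be small: $\nabla^2 F(0)u$ and $\nabla^2 F(x)u$ have conditional size $O(|x|)$ via $V_1=0$, and $\nabla^2 F(y)(y/|y|)$ has size $O(|y|)$ via $\nabla F(0)=\nabla F(y)=0$ and a Taylor expansion at $y$. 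Expanding each determinant along the relevant small column(s) and applying H\"older's inequality together with standard Gaussian moment bounds yields an estimate on the conditional expectation whose product with the $\varphi$-bound matches the claim.

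The main technical obstacle is the construction of $L$: it must simultaneously yield (i) non-degeneracy of the limiting covariance of $(V_0,V_1,V_2)$, (ii) the target Jacobian factor, and (iii) a Hessian column analysis producing the correct residual powers of $|x|$, $|y|$ and $\sin\theta$. The unusual scaling $\sqrt{|y|}+\sin\theta$ along the $u$-direction reflects a sharp transition between the regime $\sin\theta\gtrsim\sqrt{|y|}$, where first-order divided differences along $u$ and $w$ suffice, and the regime $\sin\theta\lesssim\sqrt{|y|}$, where near-collinearity forces a genuine second-order divided difference along $u$; matching the estimates uniformly across this transition is the bulk of the technical work.
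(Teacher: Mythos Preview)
Your overall strategy (divided differences, then a split into a density bound and a conditional-expectation bound) is exactly what the paper does, but the way you distribute the factors between the two pieces is wrong, and this causes a genuine gap.

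First, your anisotropic rescaling $L$ is internally inconsistent with your own Taylor analysis. You correctly identify that the numerator of $V_2$ is of order $|y|\sin\theta + O(|y|^2)\asymp |y|(|y|+\sin\theta)$ in the $(d-1)$ perpendicular directions, and of order $|y|^2$ in the $u$-direction after cancelling the redundant component of $V_1$. To obtain a \emph{non-degenerate} limit for $(V_0,V_1,V_2)$ you must therefore rescale by $|y|^{-1}(|y|+\sin\theta)^{-1}$ perpendicularly and by $|y|^{-2}$ along $u$; with your stated scalings $(|y|+\sin\theta)^{-1}$ and $(\sqrt{|y|}+\sin\theta)^{-1}$ the vector $V_2$ tends to $0$ and the covariance degenerates, so the claimed uniform Gaussian density bound does not follow. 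With the correct scalings one gets $|\det T|=|x|^{-d}|y|^{-d-1}(|y|+\sin\theta)^{-(d-1)}$, and hence
\[
\varphi(x,y,0)\le c\,|x|^{-d}\,|y|^{-d-1}\,(|y|+\sin\theta)^{-(d-1)},
\]
which is the paper's Lemma~\ref{l:Third_mom_2}. Note that $(\sqrt{|y|}+\sin\theta)$ does not appear here at all.

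Second, and relatedly, the factor $(\sqrt{|y|}+\sin\theta)$ is not a Jacobian contribution; it arises on the conditional-expectation side, and your column analysis is too coarse to produce it. Extracting one small column from each Hessian gives only $\det\nabla^2 F(0)=O(|x|)$, $\det\nabla^2 F(x)=O(|x|)$, $\det\nabla^2 F(y)=O(|y|)$, which combined with the correct density bound above does not recover the target. The paper instead bounds $\det\nabla^2 F(0)$ more sharply: writing the determinant in the basis $(\hat y,u_2,\dots,u_d)$, every product in the Laplace expansion contains either $\partial_{\hat y}^2 F(0)$ or two factors $\partial_{\hat y}\partial_{u_i}F(0)$. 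For the latter one uses the first-order divided difference at $y$ to get $O(|y|)$ each; for the former one needs a \emph{second-order} divided difference combining the conditions at $0$, $x$ and $y$ (a Taylor expansion to order $7/2$) to obtain $\partial_{\hat y}^2 F(0)=O\bigl(|y|(\sqrt{|y|}+\sin\theta)\bigr)$ under the conditioning. This yields $\tilde E[\,\cdot\,]\le c(1+\|p\|_{C^4})^{3d}\,|x|\,|y|^2(\sqrt{|y|}+\sin\theta)$, the paper's Lemma~\ref{l:Third_mom_1}, and the product with the density bound then matches the statement of the proposition.
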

\begin{remark}
Throughout this section, in the case that $d=1$ we take $\theta=\pi$ identically. All of our later calculations will be valid with this convention (although many formulae simplify considerably in this case).
\end{remark}

\begin{proof}[Proof of Theorem~\ref{t:tmb} given Proposition \ref{p:tpint}]
By H\"older's inequality, for every $R \ge 1$ and $\eps \in (0,1]$ we have
\begin{align}
\nonumber \E[N_c(R)^3]&\leq \E \Big[\Big(\sum_{x\in 2\eps\Z^d\cap \Lambda_{2R}} N_c(x+\Lambda_\eps)\Big)^3\Big]\\
\nonumber &=\sum_{x,y,z\in 2\eps\Z^d\cap \Lambda_{2R}}\E\Big[N_c(x+\Lambda_\eps)N_c(y+\Lambda_\eps)N_c(z+\Lambda_\eps)\Big]\\
\label{e:tmb1} &\leq (3R\eps^{-1})^{3d}\sup_{a\in\R^d}\E\big[N_c(a+\Lambda_\epsilon)^3\big].
\end{align}
We henceforth fix $\eps = 1/\sqrt{d}$.

Next we observe that, since $f$ is stationary,
\begin{displaymath}
J(x,y,z)=\widetilde{J}(x-z,y-z,0)
\end{displaymath}
where $\widetilde{J}$ is defined analogously to $J$ when $p(\cdot)$ is replaced by $p(z+\cdot)$. Using the fact that $\|p(z+\cdot)\|_{C^4(\R^d)}=\|p\|_{C^4(\R^d)}$, we deduce from Proposition~\ref{p:tpint} that for all $(x-z,y-z)\in\D$,
\[  J(x,y,z)  \le c_1  |x-z|^{-d+1} |y-z|^{-d+1} (|y-z|+\sin\theta)^{-d+1}(\sqrt{\lvert y-z\rvert}+\sin\theta) , \]
where $\theta\in[\pi/3,\pi]$ denotes the angle between $x-z$ and $y-z$, and $c_1 > 0$ depends only on $f$ and $p$.

Then by the Kac-Rice formula \eqref{e:krtp},
\begin{align}
\nonumber \mathbb{E}[N_c(\eps)(N_c(\eps)-1)&(N_c(\eps)-2)]  =\int_{x,y,z\in \Lambda_\eps} J(x,y,z) \, dxdydz \\
\nonumber & \le c_2 \int_{\lvert x-z\rvert < \lvert y-z\rvert<\lvert x-y\rvert \leq 1} J(x,y,z) \, dxdydz \\
\label{e:kr1} & \le c_3 \int_{\lvert u\rvert<\lvert v\rvert\leq 1}\lvert u\rvert^{-d+1}\lvert v\rvert^{-d+1}(\lvert v\rvert+\sin\theta)^{-d+1}(\sqrt{\lvert v\rvert}+\sin\theta) \, du dv
\end{align}
where $\theta\in[0,\pi]$ is the angle between $u$ and $v$. In the second line we integrated over configurations where $(x,y)$ is the longest side of the triangle with vertices $x$, $y$ and $z$. Other integrals of this type are obtained by relabeling variables and are of the same order. When $d=1$ we observe that the integrand here is bounded, completing the proof in this case.

Assuming now that $d\geq 2$, we switch to spherical coordinates for $u$; for each fixed value of $v$ we choose a basis for $u$ in which the final coordinate is in the direction $v$ and we then define $r>0$, $\theta_1\in[0,2\pi)$ and $\theta_2,\dots,\theta_{d-1}\in[0,\pi]$ by
\begin{align*}
\begin{cases}
    u_1  &  \! \! \! \! \! =r\prod_{k=1}^{d-1}\sin\theta_k\\
    u_m &  \! \! \! \! \! =r\cos\theta_{m-1}\prod_{k=m}^{d-1}\sin\theta_k\qquad\text{for }m=2,\dots,d-1,\\
    u_d &  \! \! \! \! \! =r\cos\theta_{d-1}.
\end{cases}
\end{align*}
Our choice of basis implies that $\theta_{d-1}$ is equal to $\theta$, the angle between $u$ and $v$. The Jacobian for spherical coordinates is given by
\begin{displaymath}
(-r)^{d-1}\prod_{k=2}^{d-1}\sin^{k-1}\theta_k
\end{displaymath}
and so the integral in \eqref{e:kr1} is bounded by
\begin{align*}
c_4 &\int_{\lvert v\rvert\leq 1}\lvert v\rvert^{-d+1}\int_0^{\lvert v\rvert}\int_0^\pi \left(\frac{\sin\theta}{\lvert v\rvert+\sin\theta}\right)^{d-2}\frac{\sqrt{\lvert v\rvert}+\sin\theta}{\lvert v\rvert+\sin\theta}\;d\theta drdv\leq c_5  \int_{\lvert v\rvert\leq 1}\lvert v\rvert^{-d+3/2}\;dv
\end{align*}
where we have used the bound $\frac{\sqrt{\lvert v\rvert}+\sin\theta}{\lvert v\rvert+\sin\theta}\leq\sqrt{\lvert v\rvert}/\lvert v\rvert$. Since the above expression is finite, we see that $\mathbb{E}[N_c(\eps)(N_c(\eps)-1)(N_c(\eps)-2)]$ is bounded by a constant depending only on the distribution of $f$ and the norm of $p$. Moreover the same is true if we replace $\Lambda_\epsilon$ by $a+\Lambda_\epsilon$ since the distribution of $f$ and the norm of $p$ are translation invariant. Since, for any positive random variable $X$,
\[ \E[X^3] = \E[X^3 \id_{X \le 6}] + \E[X^3 \id_{X \ge 6}]  \le 6^3 + 2\E[X(X-1)(X-2)] , \]
combining with \eqref{e:tmb1} we have the desired result.
\end{proof}

\begin{remark}
\label{r:const}
This proof shows that the constant $c$ in Theorem \ref{t:tmb} is at most ${c' (1 +  \|p\|_{C^4(\R^d)})^{3d}}$, for $c' > 0$ depending only on $f$.
\end{remark}

\subsection{Overview of the divided difference method; a second moment bound for zeros}
\label{s:introdd}
We prove Proposition \ref{p:tpint} using the divided difference method, which we present here in a simpler setting. Suppose we want to bound the second moment of the number of zeros of a Gaussian process $F = f + p$, where $f$ is a $C^2$-smooth stationary Gaussian process and $p \in C^2(\R)$. We will show that, as long as the spectral measure of $f$ has more than two points in its support (to ensure sufficient non-degeneracy),
\begin{equation}
    \label{e:zbound}
\E[N(R)^2] \le c (1 +  \|p\|_{C^2(\R)})^2 R^2 , 
\end{equation} 
where $N(R) = |\{x \in [0,R] : F(x) = 0\}|$ is the number of zeros of $F$ in $[0,R]$, and $ c > 0$ depends only on the law of  $f$. Although the bound $\E[N(R)^2] \le c R^2$ is classical (see \cite{kl06}), we are not aware of any existing results on the dependence of the constant on $p$.

Using the Kac-Rice formula as above (valid since $(f(x),f(y))$ is non-degenerate for ${x \neq y}$), it is sufficient to prove that, for $x,y \in [0, 1]$,
\begin{equation}
    \label{e:secterm3}
 J(x,y) \le c (1 +  \|p\|_{C^2(\R)} )^2,
 \end{equation}
where
\[ J(x,y) = \varphi(x,y) \times \mathbb{E}  \big[ | F^\prime(x)   F^\prime(y)  | \, | \,  ( F(x) ,  F(y) ) = ( 0, 0) \big] \]
and $\varphi(x,y)$ is the Gaussian density of $(F(x), F(y) )$ at $(0,0)$. Note that the intensity $J(x,y)$ factorises into a Gaussian density $\varphi$ which diverges as $x - y \to 0$, and a contribution from the gradients which one expects to be small as $x - y \to 0$. To ensure integrability, one needs to carefully control these terms on the diagonal.

The essence of the divided difference method is the observation that, to make the behaviour on the diagonal more amenable, it is helpful to replace the vector $(F(x), F(y))$ with the vector $(F(x), D_{x,y}(F))$, where  \[  D_{x,y}(F) = \begin{cases}  \frac{F(y)-F(x)}{y-x} & x \neq y, \\ F'(x) & x = y . \end{cases} \]

We first show that 
\begin{equation}
    \label{e:secterm2}
 \varphi(x,y) \leq c |y-x|^{-1}
 \end{equation}
for all $x,y \in [0,1]$ and a constant $c > 0$ depending only on $f$.

 For a square-integrable random vector $X$, let $\DC(X)$ denote the determinant of its covariance matrix. Clearly,
\[  \varphi(x,y) \leq (2\pi)^{-1}  \DC( f(x), f(y))^{-1/2}  .\]
Moreover, by standard properties of the DC operator (see Lemma \ref{l:dc})
\[ \DC( f(x), f(y) ) = |x-y|^2 \DC( f(x), D_{x,y}(f)) .\]
Since $f$ is $C^1$-smooth, as $y \to x$,
\[  D_{x,y}(f) = \frac{f(y)-f(x)}{y-x} \to f'(x)  \]
almost surely (and so in $L^2$), and hence we infer that (again see Lemma \ref{l:dc}), as $y \to x$,
\[ \DC( f(x), D_{x,y}(f)) \to  \DC( f(x), f'(x) )  .\]
Since $(f(x), f'(x))$ is non-degenerate (in fact they are independent by stationarity), by continuity and compactness we have \eqref{e:secterm2}.

We next show that 
\begin{equation}
\label{e:secterm4}
 \E[ |  F'(x)   F'(y)  | \, | \, (F(x) , F(y)   ) = (0,0) ]  \le  c ( 1 +  \|p\|_{C^2(\R)} )^2 |y-x|
\end{equation} 
for all $x,y \in [0,1]$. Combined with \eqref{e:secterm2}, this completes the proof of \eqref{e:secterm3} and hence \eqref{e:zbound}. 

By a Taylor expansion, for $z = x,y$,
\[ \big| F'(z) - D_{x,y}(F)  \big| \le |y-x|^{1/2} \|F\|_{C^{3/2}([0,1])} . \]
Hence using standard properties of Gaussian conditioning (see Lemma \ref{l:Gaussian_conditioning}), 
\begin{align*}
     \E[ | F'(z)  |^2  \, |  \, ( F(x) ,F(y)  ) = (0,0) ] & =  \E[ | F'(z)  |^2  \, |  \, ( F(x) ,D_{x,y}(F)  ) = (0,0) ]  \\ 
     & =  \E[ | F'(z) - D_{x,y}(F)  |^2  \, |  \, ( F(x) ,D_{x,y}(F)  ) = (0,0) ]  \\ 
     & \le  |y-x| \cdot \E \big[  \| F \|^2_{C^{3/2}([0,1])} \, | \, (F(x) , D_{x,y}(F)   ) = (0,0)   \big] . 
     \end{align*}
Applying the Cauchy-Schwarz inequality to \eqref{e:secterm4}, it remains to prove that
 \[ \E \big[  \| F \|^2_{C^{3/2}([0,1])} \, | \, (F(x) , D_{x,y}(F)   ) = (0,0)   \big]  \le c (1 + \|p\|_{C^2(\R)})^2 . \]
For this it is sufficient that (see the proof of Lemma \ref{l:C4norm} below for details)
\begin{equation}
\label{e:secterm5}
\sup_{z,x,y\in [0,1]} \sup_{\lvert\alpha\rvert\leq 2} \Var[ \partial^\alpha F(z) \, | \, (F(x) , D_{x,y}(F)   ) = (0,0) ] \le c_1    
\end{equation}
and
\begin{equation}
\label{e:secterm6}
\sup_{z,x,y\in [0,1]}  \sup_{\lvert\alpha\rvert\leq 2}  \lvert\E[ \partial^\alpha F(z) \, | \, (F(x) , D_{x,y}(F)   ) = (0,0)  ] \rvert  \le c_2  \|p\|_{C^2(\R)}. 
\end{equation}
Since conditioning can only reduce the variance of a Gaussian variable, and by stationarity,
\[ \Var[ \partial^\alpha F(z) \, | \, (F(x) , D_{x,y}(F)   ) = (0,0) ] \le  \Var[ \partial^\alpha F(z)  ] =\Var[ \partial^\alpha f (0)] ,\]
which proves \eqref{e:secterm5}. Moreover, using Gaussian regression and the fact that various covariances involving $F$ are the same as the corresponding covariances for $f$,
\begin{align*}
& \E[ \partial^\alpha F (z) \, | \, (F(x) , D_{x,y}(F)   ) = (0,0) ] \\
& \qquad \quad   =   \partial^\alpha p(z) - \Cov\bigg[   \partial^\alpha f (z),\begin{pmatrix}   f(x)\\D_{x,y}(f))   \end{pmatrix}\bigg]^t\Cov[f(x),D_{x,y}(f)]^{-1}(  p(x),D_{x,y}(p)).
\end{align*}
By an argument similar to that following \eqref{e:secterm2}, since $(f(0), f'(0))$ is non-degenerate the spectral norm of $\Cov[f(x),D_{x,y}(f)]^{-1}$ is uniformly bounded. Since also $D_{x,y}(g) \le \|g\|_{C^2([0,1])}$ for $g = f,p$, we deduce \eqref{e:secterm6}.

\subsection{Bound on the three-point intensity}
We now complete the proof of Proposition \ref{p:tpint}, for which we use a natural extension of the above approach to higher order derivatives. We shall prove the following two bounds:

\begin{lemma}\label{l:Third_mom_2}
There exists a $c>0$ such that, for all $(x,y)\in\D$,
\begin{displaymath}
\DC( \nabla f(0), \nabla f(x) , \nabla f(y) ) \ge c \lvert x\rvert^{2d}\lvert y\rvert^{2(d+1)}(\lvert y\rvert+\sin\theta)^{2(d-1)} ,
\end{displaymath}
where $\theta\in[\pi/3,\pi]$ denotes the angle between $x$ and $y$.
\end{lemma}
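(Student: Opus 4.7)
The plan is to use the multilinearity and scaling properties of the covariance determinant operator (Lemma~\ref{l:dc}) to rewrite $\DC(\nabla f(0),\nabla f(x),\nabla f(y))$ in terms of divided differences of $\nabla f$ at the origin, extracting explicit scalar factors along the way, then to conclude via a continuity and compactness argument using the non-degeneracies in Remark~\ref{r:nondegen}.

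First I would replace $\nabla f(x)$ and $\nabla f(y)$ by the first-order divided differences $G_x := (\nabla f(x)-\nabla f(0))/|x|$ and $G_y := (\nabla f(y)-\nabla f(0))/|y|$, yielding $\DC(\nabla f(0),\nabla f(x),\nabla f(y)) = |x|^{2d}|y|^{2d}\,\DC(\nabla f(0),G_x,G_y)$. Working in orthonormal coordinates with $\hat y = e_1$ and $\hat x = \cos\theta\,e_1+\sin\theta\,e_2$, a second-order Taylor expansion (justified by $f\in C^4$) combined with the unit-Jacobian substitution $V := G_x - \cos\theta\,G_y$ then gives
\[
V \;=\; \sin\theta\,\partial_{e_2}\nabla f(0) \;-\; \frac{\cos\theta(|y|-|x|\cos\theta)}{2}\,\partial_{e_1}^2\nabla f(0) \;+\; \text{smaller terms},
\]
where ``smaller'' collects contributions of order $|x|\sin\theta$ or $O((|x|+|y|)^2)$. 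The first term carries the $\sin\theta$ factor that is large when $\sin\theta$ is, and the second a $|y|$ factor that dominates when $\sin\theta$ is small.

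The crucial step will be extracting the additional factor $|y|^2(|y|+\sin\theta)^{2(d-1)}$ from $\DC(\nabla f(0),V,G_y)$ by analysing $V$ entrywise. By Schwarz's theorem the first entry of $\partial_{e_2}\nabla f(0)$ equals $\partial_{e_1}\partial_{e_2}f(0)$, which also appears as the second entry of $G_y\approx\partial_{e_1}\nabla f(0)$; a further unit-Jacobian shear subtracting $\sin\theta\,(G_y)_2$ from the first coordinate of $V$ cancels this shared contribution, leaving a first entry of leading magnitude $\gtrsim|y|$. For each $i\ge 2$, the entry $V_i$ is a linear combination of $(\partial_{e_2}\nabla f(0))_i$ and $(\partial_{e_1}^2\nabla f(0))_i$ with coefficient pair $(\sin\theta,\,-\cos\theta(|y|-|x|\cos\theta)/2)$ whose Euclidean norm is $\gtrsim|y|+\sin\theta$, using the dichotomy $\max(\sin\theta,|\cos\theta|)\ge 1/\sqrt{2}$ (valid since $\sin^2\theta+\cos^2\theta=1$) together with the constraint $|x|<|y|$. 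Rescaling the first entry of $V$ by $1/|y|$ and the remaining $d-1$ entries by $1/(|y|+\sin\theta)$ then extracts the desired factor.

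Finally, I would bound the residual $\DC$ from below uniformly by continuity and compactness: it depends continuously on the unit vectors $\hat y,\hat w_1$ and on the normalised mixing coefficients, all varying in compact sets, and the limiting Gaussian vector is either $(\nabla f(0),\partial_{\hat y}\nabla f(0),\partial_{\hat y}^2\nabla f(0))$ (in the regime $\sin\theta\to 0$) or a combination also involving $\partial_{\hat w_1}\nabla f(0)$ (when $\sin\theta$ stays bounded below), both non-degenerate by Remark~\ref{r:nondegen}(2)--(3). The hard part will be verifying that the Taylor remainders remain genuinely lower-order after being amplified by the rescaling factor $1/(|y|+\sin\theta)$, which can become very large; this requires careful tracking of powers of $|x|,|y|,\sin\theta$ in each remainder term, exploiting the constraint $|x|<|y|\le 1$ on $\D$ to absorb the errors into the leading-order bounds.
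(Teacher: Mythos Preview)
Your strategy is essentially the same as the paper's: extract the scalar factors via divided differences and row operations on $(\nabla f(0),\nabla f(x),\nabla f(y))$, then argue by continuity and compactness that the residual covariance determinant is bounded below because the limiting Gaussian vectors are non-degenerate. The paper packages all of this into a separate ``divided difference'' lemma (Lemma~\ref{l:divided_difference}) which, for any sequence $(x_n,y_n)\to\partial\D$, produces matrices $M_k$ with the correct determinant and identifies the $L^2$ limit $M_kG_{x_{n_k},y_{n_k}}(f)\to MG^3_{0,y}(f)$; the proof of Lemma~\ref{l:Third_mom_2} then reduces to a three-line contradiction argument. Your entry-by-entry description is doing the same computation in fixed coordinates, and your acknowledged ``hard part'' about controlling Taylor remainders under the rescaling $1/(|y|+\sin\theta)$ is precisely what that lemma handles.

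There is, however, a genuine gap in your identification of the limiting non-degeneracies. You claim that Remark~\ref{r:nondegen}(2)--(3) suffice, but in the regime where $x,y\to 0$ while $\sin\theta$ stays bounded away from zero you also need item~(4). After your shear $(V)_1\mapsto (V)_1-\sin\theta\,(G_y)_2$ and rescaling by $1/|y|$, the leading contribution to the first coordinate is a linear combination of \emph{third} derivatives of the form $\partial_{v_2}^2\partial_{v_1}f(0)-\alpha\,\partial_{v_1}^2\partial_{v_2}f(0)$ (with $\alpha=\lim|x|/|y|\in[0,1]$), not merely second derivatives involving $\partial_{\hat w_1}\nabla f(0)$. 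The resulting limiting $3d$-vector is then (up to linear operations) $(\nabla f(0),\nabla^2 f(0),\partial_{v_2}^2\partial_{v_1}f(0)-\alpha\,\partial_{v_1}^2\partial_{v_2}f(0))$, whose non-degeneracy is exactly the content of Remark~\ref{r:nondegen}(4). You should also distinguish the boundary case $x\to 0$ with $y\not\to 0$ (which is where item~(2) is actually used); your write-up implicitly folds this into the ``$\sin\theta$ bounded below'' discussion, but the limit there involves $\nabla f(y)$ rather than further derivatives at $0$.
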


\begin{lemma}\label{l:Third_mom_1}
There exists a $c > 0$ such that, for all $(x,y)\in\D$,
\begin{multline*}
\E \big[ \big| \det( \nabla^2 F(0) \nabla^2 F(x) \nabla^2 F(y) ) \big|  \: \big| \: (\nabla F(0) ,\nabla F(x) , \nabla F(y) ) = (0,0,0) \big]\\
\leq c(1+\|p\|_{C^4(\R^d)})^{3d} \lvert x\rvert\lvert y\rvert^2(\sqrt{\lvert y\rvert}+\sin\theta) , 
\end{multline*}
where $\theta\in[\pi/3,\pi]$ denotes the angle between $x$ and $y$.
\end{lemma}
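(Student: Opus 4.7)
The plan is to extend the \emph{divided-difference method} introduced in Section~\ref{s:introdd} for the second moment of zeros of a $1$D field, now in the three-point, $d$-dimensional setting. By H\"older's inequality (or the pointwise bound $|abc|\le\tfrac{1}{3}(|a|^3+|b|^3+|c|^3)$) I would reduce to bounding, for each $z\in\{0,x,y\}$, the conditional third moment
$\E[|\det\nabla^2 F(z)|^3 \mid \nabla F(0)=\nabla F(x)=\nabla F(y)=0]$, and the three resulting bounds should combine to give the target $|x||y|^2(\sqrt{|y|}+\sin\theta)$.

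Next I would perform a linear change of variables in the conditioning: in orthonormal coordinates with $x=|x|e_1$ and $y=|y|(\cos\theta\,e_1+\sin\theta\,e_2)$, replace $(\nabla F(0),\nabla F(x),\nabla F(y))$ by an equivalent triple $(\nabla F(0),D_1,D_2)$, where $D_1=|x|^{-1}(\nabla F(x)-\nabla F(0))$ is a first divided difference in the $e_1$-direction and $D_2$ is a suitably normalised second divided difference of $\nabla F$ at the three points. The normalisation is chosen so that (i) the Jacobian of the change of variables matches the $\DC$ lower bound of Lemma~\ref{l:Third_mom_2}, and (ii) $(\nabla F(0),D_1,D_2)$ converges to a uniformly non-degenerate Gaussian limit as $(x,y)\to 0$; this is where the non-degeneracy conditions (1)--(4) of Remark~\ref{r:nondegen} enter the argument, guaranteed by Assumption~\ref{a:gen}. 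Taylor-expanding $\nabla F$ around $0$ then shows that, modulo $\|p\|_{C^4}$-bounded drift and Gaussian noise of comparable size, the condition $D_1=0$ forces $\partial_{e_1}\nabla F(0)$ to be of size $|x|$, and $D_2=0$ further forces a specific one-dimensional projection of $\nabla^2 F(0)$ involving the $e_2$-direction to be of size $\sqrt{|y|}+\sin\theta$. The $\sqrt{|y|}$ term here is the critical-point analogue of the $|y-x|^{1/2}$ H\"older-type error in \eqref{e:secterm4}, where $C^4$-smoothness of $F$ replaces the $C^{3/2}$-estimate via a fourth-derivative Taylor remainder; analogous expansions based at $x$ and $y$ produce corresponding smallness of the relevant directional entries of $\nabla^2 F(x)$ and $\nabla^2 F(y)$.

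Expanding each $|\det\nabla^2 F(z)|$ along the row and column corresponding to these small directions extracts the factors $|x|$, $|y|^2$, and $\sqrt{|y|}+\sin\theta$. The leftover determinants of lower-dimensional minors involve Gaussian entries with conditional variances that are uniformly bounded (conditioning does not increase variance, and all the relevant joint distributions are non-degenerate) and conditional means bounded by $C\|p\|_{C^4}$ by Gaussian regression; Gaussian hypercontractivity then controls the remaining $3d$-th power and produces the prefactor $(1+\|p\|_{C^4})^{3d}$.

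The main obstacle is the design of $D_2$ together with its Taylor analysis, so as to extract \emph{exactly} the factor $\sqrt{|y|}+\sin\theta$, uniformly in $\theta\in[\pi/3,\pi]$. This sum captures two distinct regimes: the near-collinear one, $\sin\theta\lesssim\sqrt{|y|}$, where the two divided differences become almost linearly dependent and one must fall back on a H\"older-type estimate involving fourth-order derivatives of $F$; and the generic one, $\sin\theta\gtrsim\sqrt{|y|}$, where two linearly independent directions are available for placing the divided differences. A weaker estimate (for instance $\sin\theta$ alone) would spoil the integrability computation in the proof of Proposition~\ref{p:tpint}, so obtaining both terms with the right exponents is the crux of the argument.
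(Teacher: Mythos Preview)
Your proposal is correct and follows essentially the same divided-difference strategy as the paper: H\"older to split into three conditional determinant moments, expansion of each determinant over permutations, subtraction of divided differences of $\nabla F$ under the conditioning to force certain directional second derivatives to be small, and a uniform conditional $C^{4-\epsilon}$-norm bound (the paper's Lemma~\ref{l:C4norm}, which packages exactly the ``conditional variance bounded, conditional mean $\lesssim\|p\|_{C^4}$'' step you describe) to control the remaining entries.

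One point worth noting is the choice of frame. You propose working at $z=0$ in the $\hat x$-based basis $(e_1,e_2)$; the paper instead uses the $\hat y$-based orthonormal frame at $z=0$. With that choice the first divided difference $D(y)=|y|^{-1}(\nabla F(y)-\nabla F(0))$ alone already forces every entry $\partial_{\hat y}\partial_v F(0)$ to be $O(|y|)$, so each permutation in the determinant expansion automatically picks up a factor $|y|$ (either from $\partial_{\hat y}^2 F(0)$ or from two cross terms $\partial_{\hat y}\partial_{u_i}F(0)$), and the refined factor $\sqrt{|y|}+\sin\theta$ is extracted \emph{only} from $\partial_{\hat y}^2 F(0)$ via the explicit combination
\[
\hat y\cdot D(y)\;-\;\frac{|y|}{\cos\theta\,(|y|-|x|\cos\theta)}\big(\hat x\cdot D(y)-\hat y\cdot D(x)\big),
\]
which cancels the third-order Taylor term $\tfrac{|y|}{2}\partial_{\hat y}^3 F(0)$ and leaves a $C^{7/2}$-remainder of size $|y|(\sqrt{|y|}+\sin\theta)$. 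This is precisely the ``design of $D_2$'' you flag as the crux, made explicit. The remaining factors $|x|$ and $|y|$ then come from $z=x$ and $z=y$ by the simpler first-order estimate after swapping roles. Your $\hat x$-based variant should also work, but distributing the factors among the three points is slightly less clean; the paper's $\hat y$-frame avoids having to track two special directions simultaneously in the determinant expansion.
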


\begin{proof}[Proof of Proposition \ref{p:tpint}]
Since 
\begin{displaymath}
\varphi(x,y,0) \le (2 \pi)^{-3d/2}  \DC( \nabla f(0), \nabla f(x) , \nabla f(y) )^{-1/2},
\end{displaymath}
combining Lemmas \ref{l:Third_mom_2} and \ref{l:Third_mom_1} gives the result.
\end{proof}

We now turn to proving Lemmas~\ref{l:Third_mom_2} and \ref{l:Third_mom_1}. To begin, we collect some properties of the $\DC$ operator:
 
\begin{lemma}
\label{l:dc}
Let $X$ and $Y$ be square-integrable random vectors of dimensions $m_1$ and $m_2$ respectively. Then:
\begin{enumerate}
    \item For $A \in \R^{m_1 \times m_1}$, \[ \DC(AX) = \det(A)^2\DC(X).\]
    In particular, for $a\in\R$ and $B\in\R^{m_1\times m_2}$
    \begin{displaymath}
    \DC(aX,Y)=a^{2m_1}\DC(X,Y) \quad \text{and} \quad \DC(X+BY,Y)=\DC(X,Y).
    \end{displaymath}
    \item If $\DC(X,Y)>0$ then $\DC(X)>0$.
    \item If $X_n$ is a sequence converging to $X$ in $L^2$, then $\DC(X_n)\to\DC(X)$.
\end{enumerate}
\end{lemma}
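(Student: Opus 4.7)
The plan is to establish each of the three statements directly from the definition $\DC(X) = \det \Cov(X)$, using only standard linear algebra and continuity of covariances under $L^2$ limits. None of the steps requires more than routine bookkeeping, so I do not anticipate a substantive obstacle; the lemma is recorded essentially as a convenient reference for the divided-difference arguments in Section \ref{s:introdd} and their higher-dimensional analogues.

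For part (1), I would start from the identity $\Cov(AX) = A\,\Cov(X)\,A^t$, whence $\det \Cov(AX) = (\det A)^2 \det \Cov(X)$ by multiplicativity of the determinant. The two ``in particular'' claims then follow by writing the two transformations of $(X,Y)^t$ in $(m_1+m_2)\times(m_1+m_2)$ block form: the scaling $X \mapsto aX$, $Y \mapsto Y$ corresponds to $\mathrm{diag}(aI_{m_1}, I_{m_2})$, which has determinant $a^{m_1}$; and the shear $X \mapsto X+BY$, $Y \mapsto Y$ corresponds to the unipotent block matrix with identity blocks on the diagonal and $B$ in the upper right, whose determinant is $1$. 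Squaring yields the stated exponents.

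For part (2), I would argue by contrapositive. If $\DC(X)=0$ then $\Cov(X)$ is singular, so there exists $v \in \R^{m_1}\setminus\{0\}$ with $\Cov(X)v = 0$; equivalently $\Var(v^t X) = v^t \Cov(X) v = 0$, so $v^t X$ is almost surely constant. Consequently $\Cov(Y_j, v^t X) = 0$ for every coordinate $Y_j$, which shows that the nonzero vector $(v,0) \in \R^{m_1+m_2}$ lies in the kernel of the joint covariance matrix of $(X,Y)$, forcing $\DC(X,Y)=0$.

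For part (3), the key point is that the map $X \mapsto \Cov(X)$ is continuous entrywise under $L^2$ convergence: applying Cauchy--Schwarz to the decomposition $\E[X_n^i X_n^j] - \E[X^i X^j] = \E[(X_n^i - X^i) X_n^j] + \E[X^i (X_n^j - X^j)]$, and similarly to $\E[X_n^i] - \E[X^i]$, yields entrywise convergence of $\Cov(X_n)$ to $\Cov(X)$ (using also that $\|X_n\|_{L^2}$ is bounded). Since the determinant is a polynomial in the matrix entries, it is continuous, and $\DC(X_n)\to\DC(X)$ follows immediately.
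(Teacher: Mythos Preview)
Your proof is correct and follows the same approach as the paper: both argue directly from $\Cov(AX)=A\,\Cov(X)\,A^t$ for (1), use degeneracy of $\Cov(X)$ to force degeneracy of $\Cov(X,Y)$ for (2), and invoke continuity of the determinant together with $L^2$-continuity of covariances for (3). The paper simply records these as ``immediate from the definition'', ``elementary'', and ``follows from continuity of the determinant operator'', whereas you have spelled out the details.
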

\begin{proof}
The first item is immediate from the definition
\begin{displaymath}
\Cov[X]=\E\big[(X-\E X)(X-\E X)^t\big], 
\end{displaymath}
the second item is elementary, and the third item follows from the continuity of the determinant operator.
\end{proof}

We also collect some standard properties of (Gaussian) conditioning:

\begin{lemma}
\label{l:Gaussian_conditioning}
Let $(X,Y)$ be an $(n+1)$-dimensional non-degenerate Gaussian random vector. Then:
\begin{enumerate}
\item Let $H:\R\to\R$ be continuous and satisfy $\lvert H(t)\rvert\leq C(1+\lvert t\rvert)^m$ for some $C,m>0$ and all $t\in\R$. For $a\in\R^n$
\begin{displaymath}
\E \big[ H(Y) \, \big|\, X=0 \big]= \E\big[H(Y-a\cdot X) \, \big|\, X = 0 \big] .
\end{displaymath}
\item If $A\in\R^{n\times n}$ is invertible, then for every $x\in\R^{n}$,
    \begin{displaymath}
    \E[Y\;|\;X=x]=\E[Y\;|\;AX=Ax].
    \end{displaymath}
\end{enumerate}
\end{lemma}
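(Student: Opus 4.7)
The strategy is to invoke the standard Gaussian regression decomposition and then reduce both statements to essentially trivial identities for the "innovation" component.

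Since $(X,Y)$ is non-degenerate, in particular $\Cov(X)$ is invertible, so one can decompose
\[ Y = b \cdot X + Z, \qquad b := \Cov(X)^{-1}\Cov(X,Y) \in \R^n, \]
where $Z = Y - b \cdot X$ is a Gaussian random variable whose covariance with $X$ vanishes by construction. Joint Gaussianity of $(X,Y)$ then upgrades this to full independence of $Z$ and $X$. In this setup, the regular conditional distribution of $Y$ given $X = x$ (well-defined precisely because $\Cov(X) > 0$) is the law of $b \cdot x + Z$.

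For part (1), writing
\[ Y - a \cdot X = (b - a)\cdot X + Z , \]
we see that its regular conditional distribution given $X = 0$ is again the law of $Z$, identical to that of $Y$ given $X = 0$. Hence, for any continuous $H$ with $H(Y)$ integrable,
\[ \E[H(Y) \mid X = 0] \;=\; \E[H(Z)] \;=\; \E[H(Y - a\cdot X) \mid X = 0] . \]

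For part (2), the same decomposition gives $\E[Y \mid X = x] = b \cdot x + \E[Z]$. Now $AX$ is a measurable function of $X$, so $Z$ is independent of $AX$ as well; since $A$ is invertible we may write $X = A^{-1}(AX)$ and therefore
\[ Y \;=\; (A^{-T} b) \cdot (AX) + Z . \]
Applying the same regression identity to the pair $(AX, Y)$ (non-degenerate since $\DC(X) > 0$ and $A$ is invertible) yields
\[ \E[Y \mid AX = Ax] \;=\; (A^{-T} b) \cdot (Ax) + \E[Z] \;=\; b \cdot x + \E[Z] , \]
which matches $\E[Y \mid X = x]$, as required.

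The only conceptual subtlety is that both statements condition on measure-zero events, but this is harmless: the existence and explicit Gaussian form of the regular conditional laws follow from the non-degeneracy assumption via the usual disintegration for jointly Gaussian vectors, so no genuine obstacle arises.
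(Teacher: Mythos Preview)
Your proof is correct and follows essentially the same approach as the paper, namely Gaussian regression. The only cosmetic difference is that you work via the explicit innovation decomposition $Y = b\cdot X + Z$ with $Z$ independent of $X$, whereas the paper dismisses part~(1) as ``an elementary property of conditioning'' and for part~(2) manipulates the regression formula $\E[Y\mid AX=Ax] = \E[Y] + \Cov(Y,AX)\Cov(AX)^{-1}(Ax-\E[AX])$ directly, cancelling the $A$'s algebraically.
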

These properties do not really rely on $X$ and $Y$ being Gaussian; the proof below shows that it is enough for these variables to have a sufficiently nice joint density.
\begin{proof}
Let $p(u,v)$ denote the joint density of $(X,Y)$ and $B_\epsilon(x)$ denote the ball of radius $\epsilon$ centred at $x$, then
\begin{align*}
    \E\big[H(Y-a\cdot X) \, \big|\, X = x \big]&=\lim_{\epsilon\to 0}\frac{\int_{B_\epsilon(x)}\int_\R H(v-a\cdot u)p(u,v)\;dvdu}{\int_{B_\epsilon(x)}\int_\R p(u,v)\;dvdu}=\frac{\int_\R H(v-a\cdot x)p(x,v)\;dv}{\int_\R p(x,v)\;dv}
\end{align*}
where the final equality follows from applying the dominated convergence theorem to the numerator and denominator. When $x=0$, the above limit is the same for all $a\in\R^n$ (including $a=0$), proving the first item. By the same dominated convergence argument, the above limit is unchanged if we replace $B_\epsilon(x)$ by $\{u\in\R^n\;|\;\lvert Au-Ax\rvert\leq\epsilon\}$. This proves the second item on setting $a=0$ and $H(Y)=Y$.
\end{proof}

The next lemma contains the essence of the divided difference method. It shows that, after appropriate linear transformation (analogous to the mapping $(f(x),f(y))\mapsto (f(x),D_{x,y}(f))$ described in Section~\ref{s:introdd}), the vector $(\nabla f(0),\nabla f(x),\nabla f(y))$ converges to a non-degenerate Gaussian vector as $x ,y \to 0$.

Recall that
\begin{displaymath}
\D =\left\{(x,y)\in\R^{2d}\;\middle|\;0<\lvert x\rvert<\lvert y\rvert<\lvert x-y\rvert\leq 1 \right\},
\end{displaymath}
and for any $g\in C^4(\Lambda_1)$ let $G^3_{0,x}(g)$ denote the vector $(\partial^\alpha g(z)\;|\;\lvert\alpha\rvert\leq 3,z \in \{ 0,x \})$.

\begin{lemma}\label{l:divided_difference}
Let $(x_n,y_n)$ be any sequence in $\D$ converging to $(0,y)$ for some $y$ (possibly equal to zero). There exists a subsequence $n_k$, a sequence of matrices $M_k\in\R^{3d\times 3d}$, and a matrix $M$, such that:
\begin{enumerate}
    \item $\det M_k=\lvert x_{n_k}\rvert^{-d}\lvert y_{n_k}\rvert^{-d-1}(\lvert y_{n_k}\rvert +\sin\theta_{n_k})^{-d+1}$, where $\theta_{n_k}\in[\pi/3,\pi]$ denotes the angle between $x_{n_k}$ and $y_{n_k}$.
    \item For any $g\in C^4(\Lambda_1)$, as $k \to \infty$,
    \begin{displaymath}
    M_k(\nabla g(0),\nabla g(x_{n_k}),\nabla g(y_{n_k}))^t- MG^3_{0,y}(g)=o(\|g\|_{C^4(\Lambda_1)}).
    \end{displaymath}
    \item $MG_{0,y}^3(f)$ is a non-degenerate Gaussian vector.
\end{enumerate}
\end{lemma}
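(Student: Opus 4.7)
The strategy is to construct the matrices $M_k$ explicitly via divided differences of $\nabla g$ evaluated at $0, x_{n_k}, y_{n_k}$, generalising the one-dimensional observation that $g'(0)$, $(g'(x)-g'(0))/x$, and second divided differences asymptotically recover derivatives of $g$ at $0$ of increasing order. First, using compactness of the unit sphere and of $[0,1]$, I pass to a subsequence $n_k$ along which the geometric quantities $\hat{x}_{n_k} := x_{n_k}/|x_{n_k}| \to \hat{x}$, $\hat{y}_{n_k} \to \hat{y}$ and $\sin\theta_{n_k} \to \sin\theta \in [0,1]$ all converge, and (in the case $y = 0$) the ratios $|x_{n_k}|/|y_{n_k}|$, $\sin\theta_{n_k}/(|y_{n_k}|+\sin\theta_{n_k})$ and $|x_{n_k}|/(|y_{n_k}|+\sin\theta_{n_k})$ converge to limits in $[0,1]$.

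I decompose $M_k$ into three $d\times d$ blocks acting on $\nabla g(0)$, $\nabla g(x_{n_k})$, $\nabla g(y_{n_k})$ respectively. The first block is $I_d$, giving $\nabla g(0)$ in the limit. The second block is $|x_{n_k}|^{-1}$ times the $d$ rows $e_i^T(\nabla g(x_{n_k})-\nabla g(0))$, contributing $|x_{n_k}|^{-d}$ to $\det M_k$, with limit $\nabla^2 g(0)\hat{x}$ by Taylor's theorem with $C^4$ remainder. In the case $y \neq 0$, the third block is a scaled identity on $\nabla g(y_{n_k})$ whose scalar is chosen so this block contributes the remaining factor $|y_{n_k}|^{-d-1}(|y_{n_k}|+\sin\theta_{n_k})^{-(d-1)}$ (bounded above and away from $0$); the limit is a scalar multiple of $\nabla g(y)$. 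In the case $y = 0$, I work in an orthonormal basis (depending on $n_k$) in which $\hat{x}_{n_k} = e_1$ and $\hat{y}_{n_k}$ lies in the span of $e_1, e_2$, and take the third block with one row equal to a second-order divided difference of $e_1^T \nabla g$ normalised by $|y_{n_k}|^{-2}$, and the remaining $d-1$ rows equal to first-order divided differences of $e_j^T\nabla g$ ($j = 2, \ldots, d$) corrected using the $j$-th row of the second block and normalised by $|y_{n_k}|^{-1}(|y_{n_k}|+\sin\theta_{n_k})^{-1}$. A direct Taylor expansion of $\nabla g$ up to order $3$ verifies the prescribed determinant formula (up to an absorbable multiplicative constant) and the uniform $o(\|g\|_{C^4(B_1)})$ convergence.

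For non-degeneracy of $MG_{0,y}^3(f)$ I distinguish three cases. When $y \neq 0$, the limit functionals form a subset of $(\nabla f(0), \nabla^2 f(0), \nabla f(y))$, covered by Remark~\ref{r:nondegen}(2). When $y = 0$ and $\sin\theta$ remains bounded below, the $d-1$ perpendicular rows limit to components of $\nabla^2 f(0)\hat{z}$-type functionals (with $\hat{z}$ orthogonal to $\hat{x}$ in the plane of $\hat{x}, \hat{y}$) which, together with $\nabla^2 f(0)\hat{x}$ from the second block and a non-trivial combination of $\partial_{\hat{x}}^2\partial_{\hat{z}} f(0)$ and $\partial_{\hat{x}}\partial_{\hat{z}}^2 f(0)$ from the parallel row, are covered by Remark~\ref{r:nondegen}(4). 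When $y = 0$ and $\sin\theta_{n_k} \to 0$, the normalisation $(|y_{n_k}|+\sin\theta_{n_k})^{-1}$ forces the perpendicular rows to pick up $\partial_{\hat{x}}^2\partial_{e_j}f(0)$ instead of the collapsing second-order terms, and the limit matches $(\nabla f(0), \partial_{\hat{x}}\nabla f(0), \partial_{\hat{x}}^2\nabla f(0))$ from Remark~\ref{r:nondegen}(3).

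The main obstacle is the degenerate sub-case of $y = 0$ with $\sin\theta_{n_k} \to 0$, where the perpendicular directions collapse and one cannot simply divide by $\sin\theta_{n_k}$ to extract $\nabla^2 g(0)\hat{z}$. The normalisation $(|y_{n_k}|+\sin\theta_{n_k})^{-1}$ is precisely calibrated to interpolate between the non-degenerate regime (recovering second-order $\nabla^2 g(0)\hat{z}$-type information) and the degenerate regime (recovering third-order $\partial_{\hat{x}}^2\partial_{e_j} g$-type information). Verifying that every subsequential limit of the various scaling ratios yields a $3d$-tuple matching some item of Remark~\ref{r:nondegen} is the most delicate part of the argument, and motivates the otherwise somewhat unusual weight in the decay condition $|y| + \sin\theta$ appearing throughout Proposition~\ref{p:tpint}.
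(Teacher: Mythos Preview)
Your overall strategy is essentially the paper's: pass to a subsequence along which all relevant geometric ratios converge, build $M_k$ from first and second divided differences of $\nabla g$ with one block projected onto parallel/perpendicular directions and rescaled by $|y|^{-1}$ and $(|y|+\sin\theta)^{-1}$ respectively, Taylor-expand, and check the limit against the non-degeneracy list in Remark~\ref{r:nondegen}. The construction and determinant computation match the paper (the paper uses projection operators $I-\hat x_n\hat x_n^t$ rather than a moving orthonormal frame, but this is cosmetic).

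There is, however, a genuine gap in your non-degeneracy case analysis when $y=0$. The correct dichotomy is not ``$\sin\theta$ bounded below'' versus ``$\sin\theta_{n_k}\to 0$'', but rather whether $\beta:=\lim\sin\theta_{n_k}/|y_{n_k}|$ is infinite or finite. These do not coincide: if $\theta_{n_k}\to\pi$ but more slowly than $|y_{n_k}|\to 0$ (e.g.\ $|y_{n_k}|\sim n^{-2}$, $\pi-\theta_{n_k}\sim n^{-1}$), one has $\sin\theta_{n_k}\to 0$ yet $\beta=\infty$. In that regime the coefficient $\sin\theta_{n_k}/(|y_{n_k}|+\sin\theta_{n_k})\to 1$, so your perpendicular rows converge to the \emph{second}-order functionals $\partial_{v_3}\partial_{e_j}f(0)$ (with $v_3$ the subsequential limit of the unit vector perpendicular to $\hat x_{n_k}$ in the $\hat x_{n_k},\hat y_{n_k}$ plane), not to the third-order $\partial_{\hat x}^2\partial_{e_j}f(0)$ you claim. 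The limiting $3d$-vector then does \emph{not} match the pattern of Remark~\ref{r:nondegen}(3). Conversely, when $\beta\in(0,\infty)$ the limit is a genuine mixture of second- and third-order terms, again not of the pure form you describe. The paper handles this by splitting on $\beta$ rather than on $\sin\theta$: for $\beta=\infty$ the perpendicular part is second-order and one invokes item~(4) (with $v=v_1$, $w=v_2$), while for $\beta<\infty$ one necessarily has $v_2=-v_1$ and after row-reducing via Lemma~\ref{l:dc}(1) the limit matches item~(3). You should rephrase your case split accordingly; since you already extract the limit of $\sin\theta_{n_k}/(|y_{n_k}|+\sin\theta_{n_k})$, this is only a matter of using the right quantity to bifurcate on.
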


\begin{remark}
    The intuition behind this lemma is that $MG_{0,y}^3(f)$ will be related to one of the four Gaussian vectors in Remark~\ref{r:nondegen} (which are known to be non-degenerate) and the matrices $M_k$ will be chosen so that $M_k(\nabla g(0),\nabla g(x_{n_k}),\nabla g(y_{n_k}))^t$ is a Taylor expansion for $MG_{0,y}^3(g)$. The choice of vector (and matrices) will depend on the sequence $(x_n,y_n)$ and so rather than give several different cases above, we prefer to state the lemma in an abstract way.
\end{remark}
\begin{proof}
By compactness of $[\pi/3,\pi]$, we may pass to a subsequence $\theta_{n_k}$ for which ${\theta_{n_k}\to\theta\in[\pi/3,\pi]}$. For notational clarity we will denote the subsequence by $\theta_n$ (and the matrices in the statement of the lemma will be denoted by $M_n$). Similarly we may assume the following convergence:
\begin{equation}\label{e:Subseq_conv}
\begin{aligned}
\hat{x}_n&:=\frac{x_n}{\lvert x_n\rvert}\to v_1\in \mathbb{S}^{d-1},\quad\hat{y}_n:=\frac{y_n}{\lvert y_n\rvert}\to v_2\in \mathbb{S}^{d-1},\quad\frac{\lvert x_n\rvert}{\lvert y_n\rvert}\to\alpha\in[0,1],\\
\hat{w}_n&:=\left\{\!\begin{aligned}&\frac{\hat{y}_n-\cos(\theta_n)\hat{x}_n}{\sin(\theta_n)}&\text{if }\theta_n\neq \pi\\
&0 &\text{if }\theta_n=\pi
\end{aligned}\right\}\to v_3\in \mathbb{S}^{d-1} \cup\{0\}
,\quad\text{and}\quad \frac{\sin\theta_n}{\lvert y_n\rvert}\to\beta\in[0,\infty].
\end{aligned}
\end{equation}
There are now two different cases to consider: (i) $y=0$, and (ii) $y\neq 0$. 

\textbf{Case (i). $y=0$.} We take $g\in C^4(\Lambda_1)$, and define $M_n$ as the matrix which results in the following row operations:
\begin{align*}
    \begin{pmatrix}
    \nabla g(0)\\
    \nabla g(x_n)\\
    \nabla g(y_n)
    \end{pmatrix}&\to\begin{pmatrix}
    \nabla g(0)\\
    \frac{\nabla g(x_n)-\nabla g(0)}{\lvert x_n\rvert}\\
    \frac{\nabla g(y_n)-\nabla g(0)}{\lvert y_n\rvert}
    \end{pmatrix}\to\begin{pmatrix}
\nabla g(0)\\
\frac{\nabla g(x_n)-\nabla g(0)}{\lvert x_n\rvert}\\
\left(\frac{1}{\lvert y_n\rvert+\sin\theta_n}(I-\hat{x}_n\hat{x}_n^t)+\frac{1}{\lvert y_n\rvert}\hat{x}_n\hat{x}_n^t\right)\frac{\nabla g(y_n)-\nabla g(0)}{\lvert y_n\rvert}
\end{pmatrix}\\
&\to\left(\nabla g(0),\frac{\nabla g(x_n)-\nabla g(0)}{\lvert x_n\rvert},S_n\right)^t
\end{align*}
where
\begin{equation}\label{e:Taylor}
\begin{aligned}
S_n:=&(I-\hat{x}_n\hat{x}_n^t)\frac{1}{\lvert y_n\rvert+\sin\theta_n}\left(\frac{\nabla g(y_n)-\nabla g(0)}{\lvert y_n\rvert}-\cos\theta_n\frac{\nabla g(x_n)-\nabla g(0)}{\lvert x_n\rvert}\right)\\
    &+\frac{1}{\lvert y_n\rvert}\left(\hat{x}_n\cdot\frac{\nabla g(y_n)-\nabla g(0)}{\lvert y_n\rvert}-\hat{y}_n\cdot\frac{\nabla g(x_n)-\nabla g(0)}{\lvert x_n\rvert}\right)\hat{x}_n.
\end{aligned}
\end{equation}
The first statement of the lemma follows from these row operations and noting that 
\begin{displaymath}
\det\left(\frac{1}{\lvert y_n\rvert+\sin\theta_n}(I-\hat{x}_n\hat{x}_n^t)+\frac{1}{\lvert y_n\rvert}\hat{x}_n\hat{x}_n^t\right)=\lvert y_n\rvert^{-1}(\lvert y_n\rvert +\sin\theta_n)^{-d+1}.
\end{displaymath}

By a Taylor expansion, for $\lvert x_n\rvert<1$ we have
\begin{displaymath}
\left|\frac{\nabla g(x_n)-\nabla g(0)}{\lvert x_n\rvert}-\partial_{\hat{x}_n}\nabla g(0)\right|\leq c^\prime\lvert x_n\rvert\|g\|_{C^4(\Lambda_1)}
\end{displaymath}
where $c^\prime$ is an absolute constant. Using the fact that $\hat{x}_n\to v_1$, 
\begin{displaymath}
\left\lvert\frac{\nabla g(x_n)-\nabla g(0)}{\lvert x_n\rvert}- \partial_{v_1}\nabla g(0)\right\rvert\leq c^\prime(\lvert x_n\rvert+\lvert\hat{x}_n-v_1\rvert)\|g\|_{C^4(\Lambda_1)}=o(\|g\|_{C^4(\Lambda_1)}).
\end{displaymath}
Similarly by a Taylor expansion we have
\begin{equation}\label{e:T_n}
\left\lvert\frac{\nabla g(y_n)-\nabla g(0)}{\lvert y_n\rvert}-\cos(\theta_n)\frac{\nabla g(x_n)-\nabla g(0)}{\lvert x_n\rvert}-T_n\right\rvert\leq c^\prime\lvert y_n\rvert^2\|g\|_{C^4(\Lambda_1)}
\end{equation}
where
\begin{equation*}
T_n=\sin(\theta_n)\partial_{\hat{w}_n}\nabla g(0)+\frac{\lvert y_n\rvert}{2}\partial_{\hat{y}_n}^2 \nabla g(0)-\frac{\lvert x_n\rvert}{2}\cos(\theta_n)\partial_{\hat{x}_n}^2\nabla g(0)
\end{equation*}
(note that if $\hat{w}_n=0$ then we take $\partial_{\hat{w}_n}$ to be the identically zero operator). Using the convergence in \eqref{e:Subseq_conv} we see that
\begin{displaymath}
\frac{T_n}{\lvert y_n\rvert+\sin\theta_n}=\frac{\beta}{\beta+1}\partial_{v_3}\nabla g(0)+\frac{1+\alpha}{2(\beta+1)}\partial_{v_1}^2\nabla g(0)+o(\|g\|_{C^4(\Lambda_1)})
\end{displaymath}
as $n\to\infty$. By Taylor expanding the second part of \eqref{e:Taylor} analogously, we see that
\begin{displaymath}
S_n=S+o(\|g\|_{C^4(\Lambda_1)})
\end{displaymath}
where $S$ is defined as
\begin{displaymath}
(I-v_1v_1^t)\left(\frac{\beta}{\beta+1}\partial_{v_3}\nabla g(0)+\frac{1+\alpha}{2(\beta+1)}\partial_{v_1}^2\nabla g(0)\right)+\frac{1}{2}\left(\partial_{v_2}^2\partial_{v_1}g(0)-\alpha\partial_{v_2}\partial_{v_1}^2g(0)\right)v_1.
\end{displaymath}
Hence choosing $M$ such that
\begin{equation}\label{e:DClimit}
MG_{0,y}^3(g)=\left(\nabla g(0),
\partial_{v_1}\nabla g(0),S\right)
\end{equation}
proves the second point of the lemma (for the case $y=0$).

We now let $g=f$ and choose an orthonormal basis $u_1,\dots,u_d$ of $\R^d$ such that $u_1=v_1$ and if $v_3\neq 0$ then $u_2=v_3$. By the first point of Lemma~\ref{l:dc}, the degeneracy of the Gaussian vector on the right hand side of \eqref{e:DClimit} is unchanged if we multiply each of the terms $\nabla g(0)$, $\partial_{v_1}\nabla g(0)$ and $S$ by the orthogonal matrix $A$ with $i$-th row equal to $u_i$. Note that the effect of this multiplication on the first two terms is equivalent to changing the basis for the gradient operator (i.e.\ $A\nabla=(\partial_{u_1},\dots,\partial_{u_d})$). Since $Av_1=(1,0,\dots,0)^t$ we see that when $\beta=\infty$, the right hand side of \eqref{e:DClimit} is non-degenerate if and only if
\begin{displaymath}
\DC\left(A\nabla f(0),\partial_{u_1}A\nabla f(0),
\frac{1}{2}\left(\partial_{u_1}\partial_{v_2}^2f(0)-\alpha \partial_{u_1}^2\partial_{v_2}f(0)\right),
\partial_{u_2}\partial_{u_2} f(0),\dots,
\partial_{u_2}\partial_{u_d}f(0)
\right)
\end{displaymath}
is non-zero, which holds by the fourth point of Remark~\ref{r:nondegen} about the non-degeneracy of various partial derivatives of $f$  (and the first two points of Lemma~\ref{l:dc}).

If $\beta<\infty$ then $\theta_n\to\pi$ and so $v_2=-v_1$. In this case by the first two points of Lemma~\ref{l:dc}, we see that the right hand side of \eqref{e:DClimit} is non-degenerate provided that
\begin{displaymath}
\DC\left(A\nabla f(0),\partial_{u_1}A\nabla f(0),
\partial_{u_1}^3f(0),
\partial_{u_1}^2\partial_{u_2} f(0),\dots,
\partial_{u_1}^2\partial_{u_d}f(0)
\right)
\end{displaymath}
is positive. This holds by the third point of Remark~\ref{r:nondegen} and the second point of Lemma~\ref{l:dc} which completes the proof for $y=0$.

\textbf{Case (ii). $y\neq0$.} Now we let $M_n$ be defined by the following sequence of row operations:
\begin{align*}
    \begin{pmatrix}
    \nabla g(0)\\
    \nabla g(x_n)\\
    \nabla g(y_n)
    \end{pmatrix}&\to\begin{pmatrix}
    \nabla g(0)\\
    \frac{\nabla g(x_n)-\nabla g(0)}{\lvert x_n\rvert}\\
    \nabla g(y_n)
    \end{pmatrix}\to\begin{pmatrix}
\nabla g(0)\\
\frac{\nabla g(x_n)-\nabla g(0)}{\lvert x_n\rvert}\\
\lvert y_n\rvert^{-\frac{d+1}{d}}(\lvert y_n\rvert+\sin\theta_n)^{-\frac{d-1}{d}}\nabla g(y_n)\end{pmatrix}
\end{align*}
and we again see that $M_n$ has the correct determinant, proving the first statement of the lemma. By a Taylor expansion, the above expression differs from
\begin{displaymath}
\left(\nabla g(0),\partial_{v_1}\nabla g(0),
\lvert y\rvert^{-\frac{d+1}{d}}(\lvert y\rvert+\sin\theta)^{-\frac{d-1}{d}}\nabla g(y)\right)^t
\end{displaymath}
by a term which is $o(\|g\|_{C^4(\Lambda_1)})$ proving the second statement of the lemma. When $g=f$, the second point of Remark~\ref{r:nondegen} implies that the above Gaussian vector is non-degenerate, proving the final statement of the lemma.
\end{proof}

\begin{proof}[Proof of Lemma~\ref{l:Third_mom_2}]
Define
\begin{displaymath}
A(x,y)=\lvert x\rvert^{-2d}\lvert y\rvert^{-2d-2}(\lvert y\rvert +\sin\theta)^{-2d+2}\DC(\nabla f(0),\nabla f(x),\nabla f(y)).
\end{displaymath}
Since $A(x,y)$ is continuous and strictly positive on $\D$ (by the first point of Remark~\ref{r:nondegen}), and $\overline{\D}$ is compact, it suffices to show that $\liminf_{n}A(x_n,y_n)>0$ for any sequence $(x_n,y_n)\in \D$ converging to $(x,y)\in\partial \D$.

For the sake of contradiction, suppose $(x_n,y_n) \to  (x,y) \in \partial \D$ is such that $A(x_n,y_n)\to 0$. By Lemma~\ref{l:divided_difference} we may pass to a subsequence $n_k$ and find matrices $M,M_k$ such that for any $g\in C^4(\Lambda_1)$
\begin{displaymath}
M_k(\nabla g(0),\nabla g(x_{n_k}),\nabla g(y_{n_k}))^t\to MG^3_{0,y}(g).
\end{displaymath}
In particular this holds almost surely for $g=f$. Since $f$ is Gaussian, the convergence occurs also in $L^2$. Then by the third point of Lemma~\ref{l:dc} (and the expression for $\det M_k$ in Lemma~\ref{l:divided_difference})
\begin{displaymath}
A(x_{n_k},y_{n_k})=(\det M_k)^2\DC(\nabla f(0),\nabla f(x_{n_k}),\nabla f(y_{n_k}))\to\DC\big( MG^3_{0,y}(f)\big).
\end{displaymath}
By the third point of Lemma~\ref{l:divided_difference}, the latter expression is strictly positive, yielding a contradiction.
\end{proof}

A final ingredient in the proof of Lemma \ref{l:Third_mom_1} is a uniform bound on the conditional moments of $F$:

\begin{lemma}\label{l:C4norm}
For each $k\in\N$ and $\epsilon\in(0,1)$ there exists $c>0$ depending only on the distribution of $f$ such that, for any $(x,y)\in \D$,
\begin{equation}\label{e:Normbound}
\E\big[\|F\|^{k}_{C^{4-\epsilon}(\Lambda_1)}\;\big|\;\nabla F(0)=\nabla F(x)=\nabla F(y)=0\big]<c(1+\|p\|_{C^4(\Lambda_1)})^k.
\end{equation}
\end{lemma}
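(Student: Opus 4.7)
The plan is to reduce the bound on $\E[\|F\|_{C^{4-\epsilon}(B_1)}^k \mid \mathcal{C}]$, where $\mathcal{C}=\{\nabla F(0)=\nabla F(x)=\nabla F(y)=0\}$, to pointwise bounds on conditional moments of derivatives of $f$, and then establish those uniformly over $(x,y)\in\mathcal{D}$ via the divided difference method of Lemma~\ref{l:divided_difference}. Since $\|p\|_{C^{4-\epsilon}(B_1)}\le c\|p\|_{C^4(B_1)}$ by the standard embedding on bounded domains, and since $F=f+p$ with $p$ deterministic, it suffices to control $\E[\|f\|_{C^{4-\epsilon}(B_1)}^k \mid \mathcal{C}]$. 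Under $\mathcal{C}$, $f$ remains Gaussian, with mean $\mu(z):=\E[f(z)\mid\mathcal{C}]$ and conditional variance pointwise dominated by $\Var[\partial^\alpha f(0)]$ (since Gaussian conditioning cannot increase variance). I would decompose $f=\mu+(f-\mu)$ and apply the quantified Kolmogorov theorem (Section~A.9 of \cite{ns16}) to the centred Gaussian part $f-\mu$, reducing matters to the two pointwise estimates
\[
\sup_{|\alpha|\le 4,\,z\in B_1}\Var[\partial^\alpha f(z)\mid\mathcal{C}]\le c_1, \qquad \sup_{|\alpha|\le 4,\,z\in B_1}|\partial^\alpha\mu(z)|\le c_2(1+\|p\|_{C^4(B_1)}),
\]
both uniform in $(x,y)\in\mathcal{D}$. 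The variance bound is immediate from the observation above.

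The substantive work is the mean bound. Setting $X=(\nabla f(0),\nabla f(x),\nabla f(y))$ and $b_p=(\nabla p(0),\nabla p(x),\nabla p(y))^t$, Gaussian regression gives
\[
\partial^\alpha\mu(z)=-\Cov(\partial^\alpha f(z),X)\,\Cov(X)^{-1}\,b_p,
\]
and naively $\Cov(X)^{-1}$ blows up as $(x,y)$ approaches degenerate configurations on $\partial\mathcal{D}$. My plan is to neutralise this using the matrices $M_n$ supplied by Lemma~\ref{l:divided_difference}: since each $M_n$ is invertible, Lemma~\ref{l:Gaussian_conditioning}(2) lets me rewrite
\[
\partial^\alpha\mu(z)=-\Cov(\partial^\alpha f(z),M_nX)\,\Cov(M_nX)^{-1}\,M_nb_p,
\]
with $|M_nb_p|\le c\|p\|_{C^4(B_1)}$ by Lemma~\ref{l:divided_difference}(2) applied with $g=p$.

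Uniformity over $(x,y)\in\mathcal{D}$ would then follow by a compactness argument of the same shape as in the proof of Lemma~\ref{l:Third_mom_2}. The displayed expression is continuous in $(x,y)$ on the open set $\mathcal{D}$ and finite at each interior point by non-degeneracy of the unconditioned Gram matrix (Remark~\ref{r:nondegen}(1)), so it suffices to rule out blow-up as $(x_n,y_n)\in\mathcal{D}$ approaches $\partial\mathcal{D}$. Along any such sequence, Lemma~\ref{l:divided_difference} yields a subsequence $(x_{n_k},y_{n_k})$ along which $M_{n_k}X\to MG^3_{0,y_*}(f)$ in $L^2$; hence $\Cov(M_{n_k}X)\to\Cov(MG^3_{0,y_*}(f))$, which is non-degenerate by statement~(3) of the lemma, and therefore $\|\Cov(M_{n_k}X)^{-1}\|$ stays bounded; similarly $\Cov(\partial^\alpha f(z),M_{n_k}X)$ stays bounded uniformly in $z\in B_1$ and $|\alpha|\le 4$ by smoothness of $K$. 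Combined with $|M_{n_k}b_p|\le c\|p\|_{C^4(B_1)}$ this yields the desired uniform mean bound.

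The main obstacle is precisely the boundary degeneracy of the conditioning: when $0,x,y$ coalesce or become collinear, $\Cov(X)$ becomes singular and no naive regression estimate gives the correct dependence on $\|p\|_{C^4(B_1)}$. The divided difference repackaging converts this degenerate conditioning into a non-degenerate one involving higher-order derivatives at $0$ (and at $y$ in the non-coalescing case), which is exactly what forces the mean bound to be linear in $\|p\|_{C^4(B_1)}$ and uniform over $\mathcal{D}$. A secondary issue is invoking a version of the quantified Kolmogorov estimate that controls the $C^{4-\epsilon}(B_1)$ norm in terms of pointwise moments of derivatives up to order $4$ for a Gaussian field with non-zero mean; this is a routine adaptation of \cite[Section~A.9]{ns16}, applied here after separating off the deterministic mean function $\mu$.
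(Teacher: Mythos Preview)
Your proposal is correct and follows essentially the same route as the paper: reduce via the quantified Kolmogorov theorem to uniform pointwise bounds on the conditional variance (trivial, since conditioning cannot increase variance) and on the conditional mean, then control the regression formula for the mean by rewriting it through the divided-difference matrices $M_n$ of Lemma~\ref{l:divided_difference} and using a compactness/subsequence argument to handle the boundary of $\mathcal{D}$. The only cosmetic difference is that the paper works directly with $F$ rather than first splitting off $p$, and phrases the compactness step as a proof by contradiction along a sequence $p_n$ for which the bound fails.
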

\begin{proof}
Kolmogorov's theorem \cite[Sections~A.9, A.11.1]{ns16} states that for any centred \mbox{$C^4$-smooth} Gaussian field $h$ there exists a $c>0$, depending on $k$ and $\epsilon$, such that
\begin{displaymath}
\E\big[\|h\|^{k}_{C^{4-\epsilon}(\Lambda_1)}\big]\leq c\Big(\sup_{\lvert\alpha\rvert,\lvert\beta\rvert\leq 4}\sup_{z_1,z_2\in \Lambda_2}\lvert\partial^\alpha_{z_1}\partial^\beta_{z_2}\Cov[h(z_1),h(z_2]\rvert\Big)^{k/2}.
\end{displaymath}
By the Cauchy-Schwarz inequality, the latter expression is bounded above by
\begin{displaymath}
c\Big(\sup_{\lvert\alpha\rvert\leq 4}\sup_{z\in \Lambda_2}\Var[\partial^\alpha h(z)]\Big)^{k/2}.
\end{displaymath}
If we now allow $h$ to be a non-centred $C^4$ Gaussian field, then by the triangle inequality, for each $k\in\N$ and $\epsilon>0$ there exists a $c>0$ such that
\begin{displaymath}
\E\big[\|h\|^{k}_{C^{4-\epsilon}(\Lambda_1)}\big]\leq c\Big(\sup_{\lvert\alpha\rvert\leq 4}\sup_{z\in \Lambda_1}\lvert \E[\partial^\alpha h(z)]\rvert^k +\sup_{\lvert\alpha\rvert\leq 4}\sup_{z\in \Lambda_2}\Var[\partial^\alpha h(z)]^{k/2}\Big).
\end{displaymath}

Denote the conditioning event $A_{x,y}=\{\nabla F(w)=0\;\text{for all }w=0,x,y\}$. By the above inequality, the lemma is proved if we verify the following: there exists a $c>0$, depending only on the distribution of $f$, such that
\begin{equation}\label{e:TwoRequirements}
\begin{aligned}
&\sup_{z\in \Lambda_2}\sup_{(x,y)\in \D}\sup_{\lvert\alpha\rvert\leq 4}\Var[\partial^\alpha F(z)\;|\;A_{x,y}]<c\quad\text{and}\\
&\sup_{z\in \Lambda_1}\sup_{(x,y)\in \D}\sup_{\lvert\alpha\rvert\leq 4}\lvert\E(\partial^\alpha F(z)\;|\;A_{x,y})\rvert<c\|p\|_{C^4(\Lambda_1)}.
\end{aligned}
\end{equation}
Since conditioning can only reduce the variance of a Gaussian variable
\begin{displaymath}
\Var[\partial^\alpha F(z)\;|\;A_{x,y}]\leq\Var[\partial^\alpha F(z)]=\Var[\partial^\alpha f(z)], 
\end{displaymath}
and the latter is bounded uniformly over $z$ and $\alpha$. This verifies the first part of \eqref{e:TwoRequirements}.

Turning to the second part of \eqref{e:TwoRequirements}, for $g\in C^2(\Lambda_1)$, we define
\begin{displaymath}
G_{x,y}(g)=(\nabla g(0),\nabla g(x),\nabla g(y))^t.
\end{displaymath}
By Gaussian regression
\begin{equation}\label{e:GReg}
\begin{aligned}
\E[\partial^\alpha F(z)\;&|\;A_{x,y}]\\
&=\E[\partial^\alpha F(z)]-\Cov[\partial^\alpha F(z),G_{x,y}(F)]\Cov[G_{x,y}(F)]^{-1}\E[G_{x,y}(F)]\\
&=\partial^\alpha p(z)-\Cov[\partial^\alpha f(z),G_{x,y}(f)]\Cov[G_{x,y}(f)]^{-1}G_{x,y}(p).
\end{aligned}
\end{equation}
Note that this verifies the second part of \eqref{e:TwoRequirements} when $p\equiv 0$.

Suppose that the second part of \eqref{e:TwoRequirements} fails, so there exists a sequence $p_n\in C^4(\Lambda_1)$ and $(x_n,y_n,z_n)\in \D\times \Lambda_1$ such that
\begin{displaymath}
\lvert \E[\partial^\alpha F(z_n)\;|\;A_{x_n,y_n}]\rvert\|p_n\|_{C^4(\Lambda_1)}^{-1}\to\infty.
\end{displaymath}
By compactness, we may pass to a subsequence for which $z_{n_k}\to z\in \Lambda_1$ and $(x_{n_k},y_{n_k})\to(x,y)\in\overline{\D}$. Note that if $(x,y)\in\D$ then the second line of \eqref{e:GReg} is finite by the first point of Remark~\ref{r:nondegen}, so evidently $(x,y)\in\partial \mathcal{D}$. Hence we may find matrices $M,M_k$ satisfying the conclusions of Lemma~\ref{l:divided_difference}. Then by Lemma~\ref{l:Gaussian_conditioning} and Gaussian regression
\begin{multline*}
\begin{aligned}
\E[\partial^\alpha F(z_{n_k})\;|\;A_{x_{n_k},y_{n_k}}]
&=\partial^\alpha p_{n_k}(z_{n_k})-\Cov[\partial^\alpha f(z_{n_k}),M_kG_{x_{n_k},y_{n_k}}(f)]\times
\end{aligned}
\\
\Cov[M_kG_{x_{n_k},y_{n_k}}(f)]^{-1}M_kG_{x_{n_k},y_{n_k}}(p_{n_k}).
\end{multline*}
By the second and third parts of Lemma~\ref{l:divided_difference}, we have
\begin{align*}
\Cov[\partial^\alpha f(z_{n_k}),M_kG_{x_{n_k},y_{n_k}}(f)]&\to\Cov[\partial^\alpha f(z),MG^3_{0,y}(f)]\\
\Cov[M_kG_{x_{n_k},y_{n_k}}(f)]^{-1}&\to\Cov[MG^3_{0,y}(f)]^{-1}\\
\lvert M_kG_{x_{n_k},y_{n_k}}(p_{n_k})-MG_{x,y}(p_{n_k})\rvert&=o(\|p_{n_k}\|_{C^4(\Lambda_1)}).
\end{align*}
Since the first two matrices on the right hand side have bounded elements and
\begin{displaymath}
\max\{\lvert\partial^\alpha p_{n_k}(z_{n_k})\rvert,\|G_{x,y}(p_{n_k})\|_\infty\}\leq\|p\|_{C^4(\Lambda_1)} ,
\end{displaymath}
we see that $\lvert \E[\partial^\alpha F(z_{n_k}) | A_{x_{n_k},y_{n_k}}]\rvert \|p_{n_k}\|_{C^4(\Lambda_1)}^{-1}$ is uniformly bounded, yielding the desired contradiction. This completes the proof of \eqref{e:TwoRequirements} and hence of the lemma.
\end{proof}

\begin{proof}[Proof of Lemma~\ref{l:Third_mom_1}]
In the proof $c' > 0$ are constants that depend only on the field and may change from line to line. Recall the event $A_{x,y}=\{\nabla F(w)=0\;\text{for all }w=0,x,y\}$. By H\"older's inequality we have
\begin{displaymath}
 \E \big[ \big| \det(\nabla^2 F(0) \nabla^2 F(x) \nabla^2 F(y) ) \big| \, \big| \, A_{x,y} \big] \leq \prod_{z \in \{0,x,y\}} \E\left[\lvert \det(\nabla^2 F(z))\rvert^3\,\middle|\,A_{x,y}\right]^{1/3} .
\end{displaymath}
Consider $z \in \{0,x,y\}$. For any orthonormal basis $u_1,\dots,u_d$, expanding the determinant we have
\begin{align}
\nonumber \E \big[ \lvert  \det( \nabla^2 F(z) ) \rvert^3 \, \big| \, A_{x,y}\big]^{1/3} &= \E \Bigg[ \Bigg\lvert \sum_{\sigma\in S_d} \prod_{i=1}^d \partial_{u_i} \partial_{u_{\sigma(i)}} F(z)\Bigg\rvert^3 \, \Bigg| \, A_{x,y} \Bigg]^{1/3} \\
\label{e:Moment_determinant}  &\leq c'\sum_{\sigma\in S_d}\prod_{i=1}^d\E \Big[\Big\lvert\partial_{u_i}\partial_{u_{\sigma(i)}}F(z)\Big\rvert^{3d}\,\Big|\,A_{x,y}\Big]^{\frac{1}{3d}},
\end{align}
where $S_d$ is the group of permutations of $\{1,\dots,d\}$ and for the second line we have used H\"older's inequality.

Our aim is to bound the right hand side of \eqref{e:Moment_determinant}. Let us consider the case $z = 0$, in which we claim it is bounded by $c' (1+\|p\|_{C^4(\Lambda_1)})\lvert y\rvert(\sqrt{\lvert y\rvert}+\sin(\theta))$. We consider an orthonormal basis of $\R^d$, $\hat{y},u_2,\dots,u_d$, where $\hat{y}=y/\lvert y\rvert$. Each product on the right hand side of \eqref{e:Moment_determinant} contains either the term corresponding to $\partial_{\hat{y}}^2F(0)$ or two terms of the form $\partial_{u_i}\partial_{\hat{y}}F(0)$, $\partial_{u_j}\partial_{\hat{y}}F(0)$. We will prove the following bounds for these terms:
\begin{equation}\label{e:Moment_bound_1}
\begin{aligned}
\E \big[\big\lvert\partial_{\hat{y}}\partial_{u_i}F(0)\big\rvert^{3d}\,\big| \, A_{x,y}\big]&\leq c'(1+\|p\|_{C^4(\Lambda_1)})^{3d} \lvert y\rvert^{3d} \quad \text{and}\\ 
\E\big[\big\lvert\partial_{\hat{y}}^2F(0)\big\rvert^{3d}\,\big|\,A_{x,y}\big]&\leq c' (1+\|p\|_{C^4(\Lambda_1)})^{3d}\lvert y\rvert^{3d}(\sqrt{\lvert y\rvert}+\sin\theta)^{3d}.
\end{aligned}
\end{equation}
This will be sufficient for the desired bound on \eqref{e:Moment_determinant} since for any $i$ and $\sigma$, the remaining terms satisfy
\begin{displaymath}
\E \big[ \big\lvert\partial_{u_i}\partial_{u_{\sigma(i)}}F(0)\big\rvert^{3d}\, \big| \, A_{x,y} \big] \leq\E\big[\|F\|^{3d}_{C^2(\Lambda_1)}\;\big|\;A_{x,y}\big]\leq c^\prime(1+\|p\|_{C^4(\Lambda_1)})^{3d}
\end{displaymath}
by \eqref{e:Normbound}.

For $z\in\{x,y\}$ let $D(z)=\frac{\nabla F(z)-\nabla F(0)}{\lvert z\rvert}$. Turning to \eqref{e:Moment_bound_1}, by a Taylor expansion we have for any unit vector $v$
\begin{displaymath}
\left\lvert v\cdot D(z)-\partial_{\hat{z}}\partial_{v}F(0)\right\rvert\le c^\prime\lvert z\rvert\;\|F\|_{C^3(\Lambda_1)}.
\end{displaymath}
Then by Lemmas~\ref{l:Gaussian_conditioning} and~\ref{l:C4norm} we have
\begin{equation}\label{e:Moment_bound_2}
\begin{aligned}
\E\big[\big\lvert\partial_{\hat{y}}\partial_{v}F(0)\big\rvert^{3d}\,\big|\,A_{x,y}\big]&\leq\E\Big[\Big\lvert\partial_{\hat{y}}\partial_{v}F(0)-v\cdot D(y)\Big\rvert^{3d}\;\Big|\;A_{x,y}\Big]\\
&\leq c^\prime\lvert y\rvert^{3d}\E\big[\|F\|_{C^3(\Lambda_1)}^{3d}\;\big|\;A_{x,y}\big]\leq c'(1+\|p\|_{C^4(\Lambda_1)})^{3d}\lvert y\rvert^{3d}.
\end{aligned}
\end{equation}
Taking $v=u_i$ establishes the first bound in \eqref{e:Moment_bound_1} while taking $v=\hat{y}$ establishes the second bound in \eqref{e:Moment_bound_1} for $\theta\in[\pi/3,2\pi/3]$.

To prove the second bound for $\theta\in[2\pi/3,\pi]$ we require a higher order Taylor expansion. Specifically by combining the elementary estimates
\begin{align*}
    \left\lvert\hat{y}\cdot D(y)-\partial_{\hat{y}}^2F(0)-\frac{\lvert y\rvert}{2}\partial_{\hat{y}}^3F(0)\right\rvert\leq c^\prime\lvert y\rvert^{3/2}\|F\|_{C^{7/2}(\Lambda_1)}
\end{align*}
and
\begin{align*}
\left\lvert\hat{x}\cdot D(y)-\hat{y}\cdot D(x)-\frac{\cos\theta}{2}(\lvert y\rvert-\lvert x\rvert\cos\theta)\partial_{\hat{y}}^3F(0)\right\rvert\leq c^\prime\lvert y\rvert(\sqrt{\lvert y\rvert}+\sin\theta)\|F\|_{C^{7/2}(\Lambda_1)}
\end{align*}
with Lemma~\ref{l:Gaussian_conditioning} we have
\begin{multline*}
\E\big[\big\lvert\partial_{\hat{y}}^2F(0)\big\rvert^{3d}\,\big|\,A_{x,y}\big]\\
\begin{aligned}
&=\E\left[\Big\lvert\partial_{\hat{y}}^2F(0)-\hat{y}\cdot D(y)+\frac{\lvert y\rvert}{\cos\theta(\lvert y\rvert-\lvert x\rvert\cos\theta)}(\hat{x}\cdot D(y)-\hat{y}\cdot D(x))\Big\rvert^{3d}\;\Big|\;A_{x,y}\right]\\
&\leq c^\prime \lvert y\rvert^{3d}(\sqrt{\lvert y\rvert}+\sin\theta)^{3d}\E\big[\|F\|^{3d}_{C^{7/2}(\Lambda_1)}\:\big|\;A_{x,y}\big]\\
&\leq c^\prime(1+\|p\|_{C^4(\Lambda_1)})^{3d} \lvert y\rvert^{3d}(\sqrt{\lvert y\rvert}+\sin\theta)^{3d}
\end{aligned}
\end{multline*}
completing the proof of \eqref{e:Moment_bound_1}.

To complete the proof of the lemma, we require the bounds
\begin{align*}
\E\big[\lvert\det ( \nabla^2 F(x)) \rvert^{3d}\,\big|\,A_{x,y}\big]^{1/3d}&\leq c'(1+\|p\|_{C^4(\Lambda_1)})^{3d} \lvert x\rvert,\quad\text{and}\\
\E\big[\lvert\det ( \nabla^2 F(y) ) \rvert^{3d}  \,\big|\,A_{x,y}\big]^{1/3d}&\leq c' (1+\|p\|_{C^4(\Lambda_1)})^{3d}\lvert y\rvert.
\end{align*}
These both follow from the arguments given above on swapping the roles of $0$, $x$ and $y$. Specifically the analogue of \eqref{e:Moment_determinant} also holds for both $x$ and $y$ and the analogue of the first part of \eqref{e:Moment_bound_1} is enough to conclude.
\end{proof}

\begin{appendix}

\section{Basic properties of smooth Gaussian fields}\label{app:Gaussian}

We collect some basic properties of smooth Gaussian fields. First we consider Gaussian fields defined by a stationary moving average representation $f = q \ast W$.

\begin{lemma}
\label{l:wnr}
Let $q \in C^k(\R^d)$, and suppose there exist $c > 0$ and $\beta > d$ such that, for $|x| \ge 1$,
\[  \sup_{|\alpha| \le k} |\partial^\alpha q(x)| \le c |x|^{-\beta}  .   \]
Define $f = q \ast W$. Then $f$ is a.s.\ $C^{k-1}$-smooth, and for every $|\alpha| \le k-1$,
\[ \partial^\alpha f = ( \partial^\alpha q ) \star W . \]
Moreover the spectral measure of $f$ has a continuous density, and in particular its support contains an open set.
\end{lemma}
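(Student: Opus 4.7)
My plan is to introduce the candidate derivatives $g_\alpha := (\partial^\alpha q) \star W$ for $|\alpha| \le k-1$, show that each is a.s.\ continuous, and then verify inductively that $\partial^\alpha f = g_\alpha$ almost surely. The claim about the spectral density will follow from straightforward Fourier analysis applied to the identity $K = q \star q^-$, where $q^-(x) := q(-x)$.

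The first step is to verify that $\partial^\alpha q \in L^1(\R^d) \cap L^2(\R^d)$ for every $|\alpha| \le k$: the pointwise decay bound with $\beta > d$ gives integrability outside the unit ball, and continuity of $\partial^\alpha q$ (from $q \in C^k$) gives boundedness, hence integrability, inside. In particular each $g_\alpha$ is a well-defined centred stationary Gaussian field with covariance
\[ K_\alpha(h) = \int_{\R^d} \partial^\alpha q(u) \, \partial^\alpha q(u+h) \, du. \]

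Next, for each $|\alpha| \le k-1$, I would establish a.s.\ continuity of $g_\alpha$ by applying Kolmogorov's continuity theorem. The key estimate is
\[ |K_\alpha(0) - K_\alpha(h)| \le \|\partial^\alpha q\|_{L^2} \,\|\partial^\alpha q(\cdot) - \partial^\alpha q(\cdot+h)\|_{L^2} = O(|h|), \]
obtained from Cauchy-Schwarz together with the fact that $\partial^{\alpha+e_i} q \in L^2$ (valid since $|\alpha+e_i| \le k$). To identify $g_\alpha$ with $\partial^\alpha f$, I would proceed by induction on $|\alpha|$, starting from the base case $g_0 = q \star W = f$. Given $\partial^\alpha f = g_\alpha$ a.s.\ for some $|\alpha| \le k-2$, stochastic Fubini yields, for each fixed $x \in \R^d$ and $h \in \R$,
\[ \int_0^h g_{\alpha+e_i}(x + te_i) \, dt = \int_{\R^d} \Big( \int_0^h \partial^{\alpha+e_i} q(x + te_i - u) \, dt \Big) dW(u) = g_\alpha(x + h e_i) - g_\alpha(x) \quad \text{a.s.,} \]
the interchange being justified because the integrand lies in $L^2([0,h] \times \R^d)$. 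Combined with a.s.\ continuity of $g_{\alpha+e_i}$, this identity (first established for $(x,h) \in \mathbb{Q}^{d+1}$, then extended by continuity of both sides to all $x,h$ on a single event of full probability) shows that $g_\alpha$ has a classical partial derivative in each coordinate direction equal to $g_{\alpha+e_i}$, completing the inductive step and therefore showing that $f$ is a.s.\ $C^{k-1}$-smooth with the claimed derivatives.

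Finally, for the spectral density: the covariance $K = q \star q^-$ has Fourier transform $|\hat q|^2$, so the spectral measure $\mu$ has density $\rho = |\hat q|^2$ (up to the chosen Fourier normalisation). Since $q \in L^1(\R^d)$, $\hat q$ is continuous by the Riemann-Lebesgue lemma, and hence so is $\rho$. The main obstacle in the whole argument is the stochastic Fubini/continuity step in the inductive identification of $g_\alpha$ with $\partial^\alpha f$; the rest is routine.
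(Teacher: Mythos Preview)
Your argument is correct. The paper itself does not give a self-contained proof: it cites \cite[Proposition~3.3]{mv20} for the smoothness and derivative identities, and invokes the Riemann--Lebesgue lemma for the continuity of the spectral density, exactly as you do. Your proposal therefore fills in explicitly what the paper defers to a reference, via the natural route (define the candidate derivatives as white-noise integrals, establish continuity via Kolmogorov using the $L^2$ bound on one further derivative of $q$, and identify them inductively through stochastic Fubini and the fundamental theorem of calculus). The only point worth a remark is that in applying Kolmogorov's theorem in $\R^d$ from the second-moment estimate $O(|h|)$ you implicitly use Gaussianity to pass to higher moments, but this is standard and clearly what you intend.
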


\begin{proof}
See \cite[Proposition 3.3]{mv20} for a proof of the first statement in the case $d=2$, with the general case identical. The second statement follows from dominated convergence, whilst the last is a consequence of the Riemann-Lebesgue lemma.
\end{proof}
 
We next state a standard non-degeneracy property of stationary Gaussian fields:
 
\begin{lemma}
\label{l:nondegen}
Let $f$ be a $C^k$-smooth stationary Gaussian field on $\R^d$, and suppose that the support of it spectral measure $\mu$ contains an open set. Consider a finite collection $(x_i, \alpha_i)_{1 \le i \le n}$, where $x_i \in \R^d$, $\alpha_i$ is a multi-index such that $|\alpha_i| \le k$, and each $(x_i,\alpha_i)$ is distinct. Then the Gaussian vector
\[ \big( \partial^{\alpha_i} f(x_i) \big)_{1 \le i \le n}   \]
is non-degenerate. 
\end{lemma}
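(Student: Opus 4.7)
My plan is to argue by contradiction via the spectral representation of $f$. Suppose $(\partial^{\alpha_i} f(x_i))_{1 \le i \le n}$ is degenerate, so there exist real coefficients $c_1, \ldots, c_n$, not all zero, with $\sum_i c_i \partial^{\alpha_i} f(x_i) = 0$ almost surely. Since $f$ is $C^k$-smooth and each $|\alpha_i| \le k$, differentiation can be carried inside the spectral integral, and a direct computation starting from $K(x) = \int e^{i \langle \lambda, x\rangle}\, d\mu(\lambda)$ expresses the variance of this combination as
\begin{equation*}
\Var\Big[\sum_{i=1}^n c_i \partial^{\alpha_i} f(x_i)\Big] = \int_{\R^d} |P(\lambda)|^2\, d\mu(\lambda), \qquad P(\lambda) := \sum_{i=1}^n c_i (i\lambda)^{\alpha_i} e^{i\langle \lambda, x_i\rangle}.
\end{equation*}
Thus $P$ vanishes $\mu$-almost everywhere, and in particular on some nonempty open subset of $\mathrm{supp}(\mu)$ supplied by the hypothesis.

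Since $P$ extends to an entire function of $\lambda \in \mathbb{C}^d$ (as a finite combination of products of polynomials and complex exponentials), its vanishing on a nonempty open subset of $\R^d$ forces $P \equiv 0$ on $\R^d$ by the identity theorem for holomorphic functions.

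It remains to deduce that $P \equiv 0$ forces all $c_i = 0$. Group the indices by base point: let $y_1, \ldots, y_m$ be the distinct elements of $\{x_1, \ldots, x_n\}$, and set $I_j = \{i : x_i = y_j\}$. Then
\begin{equation*}
P(\lambda) = \sum_{j=1}^m Q_j(\lambda) e^{i\langle \lambda, y_j\rangle}, \qquad Q_j(\lambda) := \sum_{i \in I_j} c_i (i\lambda)^{\alpha_i},
\end{equation*}
where each $Q_j$ is a polynomial whose monomials $\{\lambda^{\alpha_i} : i \in I_j\}$ are distinct, by the distinctness of the pairs $(x_i, \alpha_i)$. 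The classical linear independence of exponential-polynomials with distinct frequencies $y_j$ then forces each $Q_j \equiv 0$, hence $c_i = 0$ for all $i$, the required contradiction. This last step can be proved for instance by induction on $m$ --- dividing by $e^{i\langle \lambda, y_m\rangle}$ and differentiating to eliminate one frequency at a time --- or via distributional Fourier transforms, noting that the distributions $\partial^{\alpha}\delta_{y}$ for distinct pairs $(y, \alpha)$ are linearly independent as tempered distributions.

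I anticipate that no step is a serious obstacle; the main care required is in justifying the spectral variance identity when derivatives are involved (which is routine via dominated convergence given the $L^2$-boundedness that follows from the smoothness hypothesis), and in sketching the exponential-polynomial linear independence cleanly. The analyticity and identity-theorem step is standard.
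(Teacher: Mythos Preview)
Your argument is correct and is essentially the standard proof of this fact. The paper itself does not give a self-contained proof; it merely cites \cite[Theorem~6.8]{wen05} for $k=0$ and \cite[Lemma~A.2]{bmm20a} for $k\le 2$, remarking that the general case is identical. Your spectral-representation argument, reducing to the vanishing of an exponential-polynomial on an open set and then invoking analyticity together with linear independence of $\{\lambda^{\alpha}e^{i\langle\lambda,y\rangle}\}$ over distinct $(y,\alpha)$, is precisely the route those references take, so there is no meaningful divergence.
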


\begin{proof}
 See \cite[Theorem 6.8]{wen05} for the case $k=0$, and \cite[Lemma A.2]{bmm20a} for the case $k \le 2$; the general case can be proven identically.
\end{proof}

\medskip
\section{A topological lemma}
\label{s:top}

In this section we deduce Lemma~\ref{l:Topological_stability} from a fundamental lemma of stratified Morse theory, which states roughly that the topology of the level set $\{g + t p = 0\}$ does not change as $t$ varies over $[t_1,t_2]$ unless, for some $t \in [t_1,t_2]$, $g + t p$ has a (stratified) critical point at level $0$.

Although this lemma is classical, there are complicating factors in our setting: most of the literature treats either constant perturbations or compact manifolds without boundary, whereas we consider general perturbations on domains with corners. However our setting is simpler in one respect: the functions $g$ and $p$ are defined in a \textit{neighbourhood} of the~domains.

Let $D$ be a stratified domain as defined in Section \ref{s:stability} and let $U$ be a compact domain that contains a neighbourhood of $D$. For two subsets $A,B \subset \mathrm{Int}D$, an \textit{isotopy} between $A$ and $B$ is a continuous map $H: A \times [t_1,t_2] \to \mathrm{Int}D$ such that, for each $t \in [t_1,t_2]$, $H(\cdot,t)$ is a  homeomorphism, $H(\cdot,t_1) = \textit{id}$, and $H(A,t_2) = B$.

Fix a pair of functions $(g,p) \in C^2(U) \times C^2(U)$, and for $t \in [t_1, t_2]$ define $g_t=g+t p$. A \textit{quasi-critical point} is a pair $(x,t) \in D \times [t_1,t_2]$ such that $g_t$ has a (stratified) critical point at $x$ at level $0$. The number of such quasi-critical points is denoted by $N_{q.c.}(D,g,p)$.

Lemma \ref{l:Topological_stability} is an immediate corollary of the following rather standard lemma:

\begin{lemma}[Fundamental lemma of stratified Morse theory]
\label{l: morse continuity}
If ${N_{q.c.}(D,g,p) = 0}$, then there is an isotopy between the union of all interior components (i.e.\ components that do not intersect $\partial D$) of the level sets $\{g_{t_1}=0\}\cap D$ and $\{g_{t_2} = 0\} \cap D$. In particular, the number of interior components of the sets $\{g_{t_1} = 0\}$ and $\{g_{t_2} = 0\}$ are the same. The conclusion is also true for the interior components of the  excursion sets $\{g_{t_i} \ge 0\}\cap D$, $i = 1,2$.
\end{lemma}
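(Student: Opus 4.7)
The plan is to deduce the lemma from the stratified version of Ehresmann's fibration theorem (Thom's first isotopy lemma). Consider the auxiliary function $G(x,t) = g_t(x) = g(x) + t p(x)$ on the compact stratified space $D \times [t_1,t_2]$, stratified as the product of the cubical stratification of $D$ with the trivial stratification of the interval. The hypothesis of no quasi-critical points translates into the projection
\[
\pi : \{G = 0\} \cap (D \times [t_1,t_2]) \to [t_1, t_2], \qquad (x,t) \mapsto t,
\]
being a proper stratified submersion. Thom's isotopy lemma then produces a stratum-preserving trivialisation $\{G = 0\} \cong (\{g_{t_1}=0\} \cap D) \times [t_1,t_2]$, which is exactly the required isotopy.

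The key step is verifying the submersion property stratum by stratum. On a product stratum $F \times (t_1,t_2)$ with $\dim F \ge 1$, the differential of $G$ is $(\nabla_F g_t(x), p(x))$. The projection $\pi$ is a submersion on $\{G = 0\} \cap (F \times (t_1,t_2))$ whenever $\nabla_F g_t(x) \neq 0$, since the kernel of the differential then contains a vector with nonzero $t$-component. Our hypothesis guarantees this: if $G(x_0,t_0) = 0$ and $\nabla_F g_{t_0}(x_0) = 0$, then $(x_0, t_0)$ is a quasi-critical point, contradicting the assumption. For strata $F$ of dimension zero, every $x \in F$ is by convention a stratified critical point, so the hypothesis forces $g_t(x) \neq 0$ throughout $[t_1,t_2]$, meaning such strata contribute nothing to $\{G = 0\}$. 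Thus $\pi$ is a stratified submersion, and properness is immediate from compactness of $D \times [t_1,t_2]$.

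Applying Thom's first isotopy lemma (whose proof proceeds by lifting $\partial_t$ to a stratum-tangent vector field and integrating) yields a continuous family of homeomorphisms $H_t : \{g_{t_1}=0\} \cap D \to \{g_t=0\} \cap D$ with $H_{t_1} = \mathrm{id}$. Because $H_t$ is generated by a stratum-tangent flow, it preserves the stratification of $D$: interior points map to interior points, and $\partial D$-points to $\partial D$. Consequently, interior components (those contained in $\mathrm{Int}(D)$) are mapped onto interior components, proving the level-set case.

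The main technical obstacle is the excursion set case, which I would handle by reapplying the same argument to the stratified space $\{G \ge 0\} \cap (D \times [t_1,t_2])$, whose top-dimensional strata are $\{G > 0\}$ within each stratum of $D \times [t_1,t_2]$ and whose codimension-one strata are $\{G = 0\}$ within each stratum. On $\{G > 0\}$ the projection $\pi$ is a trivial submersion (as $\{G > 0\}$ is open in its containing product stratum and $\pi$ is literally projection to $t$); on $\{G = 0\}$ it is a submersion by the level-set argument above. The isotopy lemma then produces a stratum-preserving isotopy of $\{g_{t_1} \ge 0\} \cap D$ onto $\{g_{t_2} \ge 0\} \cap D$ that sends interior components to interior components. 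A secondary technical issue is that the cubical stratification has corners, so strata are manifolds-with-corners rather than open manifolds; however, Whitney's conditions are trivially satisfied for affine orthogonal strata, and the isotopy lemma adapts without modification.
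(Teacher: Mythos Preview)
Your proposal is correct and takes a genuinely different route from the paper. The paper gives a direct, elementary construction: it explicitly writes down the flow
\[
\frac{dx_t}{dt} = -p(x_t)\,\frac{\nabla g_t(x_t)}{|\nabla g_t(x_t)|^2},
\]
which is precisely the lift of $\partial_t$ you allude to, verifies by a hands-on estimate that trajectories starting on an interior component stay bounded away from $\partial D$, concatenates the resulting local isotopies by compactness, and for excursion sets invokes the isotopy extension lemma rather than reapplying the argument to $\{G\ge 0\}$. You instead package all of this into a single appeal to Thom's first isotopy lemma applied to the stratified zero locus of $G$ on $D\times[t_1,t_2]$. The two arguments are morally the same---the paper's flow is exactly the controlled vector field one would build when proving Thom's lemma in this setting---but the paper's version is self-contained and avoids the overhead of verifying Whitney regularity for the induced stratification of $\{G=0\}$ and checking that the black-box theorem applies in the $C^2$ (rather than $C^\infty$) category in which $g$ and $p$ live. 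Conversely, your approach is conceptually cleaner and makes the stratum-preservation (hence interior-to-interior) property immediate, whereas the paper must argue separately, via a first-hitting-time contradiction, that interior components never reach $\partial D$. One minor point worth flagging: Thom's isotopy lemma is usually stated for smooth Whitney stratified sets, so a direct citation at $C^2$ regularity would need a word of justification; the paper sidesteps this by constructing the flow explicitly.
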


Before proving Lemma \ref{l: morse continuity}, let us deduce Lemma \ref{l:Topological_stability}:

\begin{proof}[Proof of Lemma \ref{l:Topological_stability}]
Consider the interpolation $g_t = g - \ell + t p$ for $t \in [0, 1]$. Since $(g,p)$ is stable (on $D$ at level $\ell$), for any $(x,t)\in D\times[0,1]$ we have
\begin{displaymath}
\max\big\{\lvert g_t(x) \rvert,\lvert \nabla_F g_t(x)\rvert\big\}>0.
\end{displaymath}
Hence $N_{q.c.}(D,g-\ell,p) = 0$, and the result follows by an application of Lemma \ref{l: morse continuity}.
\end{proof}

\begin{proof}[Proof of Lemma \ref{l: morse continuity}]
First we prove a local version of the result: we fix $t' \in [t_1,t_2]$ and prove that the conclusion holds for $t \in [t', t'+\delta]$ and some $\delta > 0$.  Without loss of generality we may assume that $t' = 0$.

Let $L_{t}$  be the union of all interior connected components of $ \{g_t = 0\} \cap D$, and let $L_{t,\epsilon}$ be the $\epsilon$-neighbourhood of $L_t$. If we consider $g_t$ as a function $D\times [t_1,t_2]\rightarrow \R$, then its zero locus $Z$ is a manifold. This follows from the implicit function since the derivative is non-vanishing on the locus. Indeed, let us assume that it contains a critical point, then at this point $(x,t)$ we have $\partial_t g_t(x)=0$ and $\nabla_x g_t(x)=0$, hence there is a quasi-critical point.  This proves that $Z$ is a differentiable manifold of co-dimension $1$ and its boundary lies on the boundary of $D\times [t_1,t_2]$. Let us consider an interior component of $L_0$. This component is  on the boundary of a certain component of $Z$. We claim that this component does not intersect $(\partial D)\times [t_1,t_2]$. If it does, then we can consider the first time $\tau$ when it happens, and then the intersection of the component by $D\times\{\tau\}$ contains a level surface of $g_\tau$ tangent to the boundary. This means that $g_\tau$ has a stratified critical point, which is impossible by our assumptions. So each component of $Z$ either lies in $\mathrm{Int}D\times[t_1,t_2]$ or its every section intersects $\partial D$. 

Since this compact surface does not intersect $(\partial D)\times [t_1,t_2]$, the distance between them is strictly positive. In other words, there is $\epsilon>0$ such that $L_{t,\epsilon}\subset \mathrm{Int}D$ for all $t\in [t_1,t_2]$. Similarly, by reducing $\epsilon$ if necessary, there is $c>0$ such that $|\nabla g_t (x)|>c$ for all ${x\in L_{0,\epsilon}}$ and all $t\in [t_1,t_2]$.  Hence, by choosing $\delta>0$ sufficiently small, we can ensure that ${|\nabla g_t (x)|>(4\delta/\epsilon)\cdot\sup_D\lvert p\rvert}$ for all $t\in [0, \delta]$ and all $x\in L_{0,\epsilon}$. 

For $x_0\in D$, define the flow
\[ 
\frac{dx_t}{dt}=-\partial_t g_t(x_t)\frac{\nabla g_t(x_t)}{|\nabla g_t(x_t)|^2} \ , \quad t \in [0,\delta] ,
\]
which is well-defined outside critical points of $g_t$. The key property of this flow is that, by the chain rule, $g_t(x_t)$ is constant in $t$. Note also that
\[  
\Big| \frac{dx_t}{dt} \Big| = \lvert p(x_t)\rvert |\nabla g_t(x_t)|^{-1} . 
\]
Let us consider $x_0\in L_0$. By the assumption that $|\nabla g_t(x)| >  (4 \delta / \eps)\cdot\sup_D\lvert p\rvert$ for all $x \in L_{0,\epsilon}$, we deduce that $|x_t-x_0| \le \eps/4$. Hence $H(x,t)=x_t$ is an isotopy such that ${H(L_0,t)\subset L_t\cap L_{0,\epsilon}}$. Note that in terms of the set $Z$ described above, for each component of $L_0$ its images under the flow are the sections of the corresponding component of $Z$. Our flow argument defines an isotopy between sections of all `interior' components of $Z$. In particular, this means that the union of all `interior' components of $Z$ is topologically $L_0\times[t_1,t_t]$. Hence $H(L_0,t)=L_t$ for all $t\in [0,\delta]$. 

This argument shows that local isotopies exist: for each $t$ there is $\delta=\delta(t)$ such that $L_t$ is isotopic to $L_{t+\delta(t)}$. By compactness we can choose a finite number of times $s_k$ such that
\[ s_1=t_1<s_2<s_1+\delta(s_1)<s_3<s_2+\delta(s_2)<\dots<t_2<s_n+\delta(s_n).  \]
By concatenating isotopies on overlapping time intervals we obtain a global isotopy, that is, an isotopy between $L_{t_1}$ and $L_{t_2}$. This completes the proof of the lemma for level sets. 

Since the interior level sets form the boundary of all interior excursion sets, by the isotopy extension lemma \cite[Theorem 8.1.3]{hir76} we can extend the isotopy of $L_t$ to an isotopy of $\mathrm{Int}D$ which agrees with the isotopy of $L_t$ and is the identity in a neighbourhood of $\partial D$. In particular, this gives an isotopy of the union of all interior components of the corresponding excursion sets.
\end{proof}

\begin{remark}
With more work a similar flow could also be defined for components that intersect the boundary, which would define a \emph{stratified} isotopy of $\{g_t=0\}\cap D$ for all $t$. Since we only apply Lemma \ref{l: morse continuity} to deduce the equality of the number of interior components, we do not need this.
\end{remark}

\end{appendix}

\medskip

\bigskip
\bibliographystyle{plain}
\bibliography{paper}


\end{document}